\documentclass[11pt,reqno,a4paper]{amsart}
\usepackage[english,activeacute]{babel}
\usepackage{amssymb,amsmath,amsthm,amsfonts,mathrsfs,hyperref,mathtools,stmaryrd}
\usepackage[font=footnotesize]{caption}

\usepackage{tikz}
\usetikzlibrary{intersections,shapes,trees,arrows,spy,positioning,decorations,arrows.meta,decorations.pathreplacing,patterns,calc}
\usepackage{xcolor}

\definecolor{mylightgreen}{RGB}{218, 227, 25}
\definecolor{mygreen}{RGB}{117, 208, 84}
\definecolor{mylightblue}{RGB}{32, 163, 134}
\definecolor{mypetrol}{RGB}{42, 120, 142}
\definecolor{mydarkblue}{RGB}{65, 68, 135}
\definecolor{mypurple}{RGB}{68, 1, 84}

\usepackage{enumitem}  
\usepackage[T1]{fontenc} 
\usepackage[foot]{amsaddr} 
\usepackage{cleveref}

\usepackage{dsfont}
\usepackage{subcaption}

\captionsetup[subfloat]{labelfont=normalfont} 


\setlength{\textwidth}{160mm}
\setlength{\textheight}{240mm}
\setlength{\topmargin}{-10mm}
\setlength{\evensidemargin}{-1mm}
\setlength{\oddsidemargin}{-1mm}
\setlength{\parindent}{0pt}

\setlength{\captionmargin}{12pt}

\theoremstyle{plain}
\newtheorem{theorem}{Theorem}[section]
\newtheorem{lemma}[theorem]{Lemma}
\newtheorem{proposition}[theorem]{Proposition}

\theoremstyle{definition}

\theoremstyle{remark}
\newtheorem{remark}{Remark}[section]

\numberwithin{equation}{section}


\newcommand{\Eb}  {{\mathbb E}}
\newcommand{\Ebf}  {{\mathbf E}}
\newcommand{\Nb}  {{\mathbb N}}
\newcommand{\Pb}  {{\mathbb P}}
\newcommand{\Pbf}  {{\mathbf P}}
\newcommand{\Qb}  {{\mathbb Q}}
\newcommand{\Rb}  {{\mathbb R}}

\newcommand{\Ubf}  {{\mathbf U}}

\newcommand{\bM}  {{\overline{M}}}
\newcommand{\bX}  {{\overline{X}}}
\newcommand{\bT}  {{\overline{T}}}
\newcommand{\btau}  {{\overline{\tau}}}

\newcommand{\Fs} {{\mathcal F}}

\newcommand{\dd}{{\rm d}}
\newcommand{\1}{{\mathds{1}}}

\newcommand{\K}{{R}}

\newcommand{\del}{{\vartheta}}

\def\llam{\lambda\hspace{-5.1pt}\lambda}
\def\eps{\varepsilon}


\allowdisplaybreaks[4]
 

\title[]{A two-size Wright--Fisher model: asymptotic analysis via uniform renewal theory}

\author{G. Alsmeyer$^{1}$}
\email{gerolda@uni-muenster.de}

\author{F. Cordero$^{2}$}
\email{fernando.cordero@boku.ac.at}

\author{H. Dopmeyer$^{3}$}
\email{hannah.dopmeyer@uni-bielefeld.de}

\address{$^{1}$Institute for Mathematical Stochastics, Department of Mathematics and Computer Science, University of M\"unster, Orleans-Ring 10, 48149 M\"unster, Germany}
\address{$^2$Institute of Mathematics, BOKU University, Gregor-Mendel-Stra\ss e 33/II, 1180 Vienna, Austria}
\address{$^3$Faculty of Technology, Bielefeld University, Box 100131, 33501 Bielefeld, Germany}

\date{\today}

\begin{document}

\begin{abstract}
We consider a population with two types of individuals, distinguished by the resources required for reproduction: type-$0$ (small) individuals need a fractional resource unit of size $\vartheta \in (0,1)$, while type-$1$ (large) individuals require $1$ unit. The total available resource per generation is $R$. To form a new generation, individuals are sampled one by one, and if enough resources remain, they reproduce, adding their offspring to the next generation. The probability of sampling an individual whose offspring is small is $\rho_{R}(x)$, where $x$ is the proportion of small individuals in the current generation. We call this discrete-time stochastic model a two-size Wright--Fisher model, where the function $\rho_{R}$ can represent mutation and/or frequency-dependent selection. We show that on the evolutionary time scale, i.e. accelerating time by a factor $R$, the frequency process of type-$0$ individuals converges to the solution of a Wright--Fisher-type SDE. The drift term of that SDE accounts for the bias introduced by the function $\rho_{R}$ and the consumption strategy, the latter also inducing an additional multiplicative factor in the diffusion term. 
To prove this, the dynamics within each generation are viewed as a renewal process, with the population size corresponding to the first passage time $\tau(R)$ above level $R$. The proof relies on methods from renewal theory, in particular a uniform version of Blackwell's renewal theorem for binary, non-arithmetic random variables, established via $\varepsilon$-coupling.
\end{abstract}

\maketitle
\medskip 
\noindent{\slshape\bfseries MSC 2010.} Primary: 60K05, 92D25; Secondary: 60K10, 60K35, 60J70 \\

\noindent{\slshape\bfseries Keywords. }{generalized Wright--Fisher model, selection, mutation, diffusion limit, first passage time, stopping summand, uniform renewal theory}

 \setcounter{tocdepth}{1}

\section{Introduction}
In population genetics, the Wright--Fisher model \cite{Fisher:1930, Wright:1931} is one of the most prominent and~widely used models in discrete time when aiming to describe the evolution of the type composition of individuals under the influence of evolutionary forces such as mutation, selection, gene flow and environmental changes. In its original form, the model considers the dynamics of a population of genes of two allelic types under neutral selection,
but it has by now been generalized in many directions, see e.g. \cite{CGS+,GS20,GS17, GSW23} for recent work of this kind.

In most of these generalizations, it is typically assumed that the population consists of a fixed number of haploid individuals that reproduce asexually. However, differences in consumption strategies within the population are often overlooked. This assumption can be reasonable in cases where all individuals follow the same consumption strategy, or when the resources required for reproduction are negligible in comparison to the total resource pool. For instance, the standard Wright--Fisher model falls into the first category if we interpret the fixed population size as the constant amount of available resources, assuming that each individual requires exactly one unit of resource to reproduce and is selected at random with replacement.

From a biological perspective, a classic framework addressing reproductive resource trade-offs is the \emph{$r$- and $K$-selection theory} introduced in \cite{MW:01}. It postulates a trade-off between offspring quantity and quality: $r$-strategists (e.g., mice) produce many offspring at low cost, while $K$-strategists (e.g., elephants) produce fewer, costlier offspring. The $r$-strategy is favored in unstable environments; the $K$-strategy in stable ones. In a different context, \cite{Tilman:1982} examined the role of resource use in species coexistence. There, species capable of surviving with minimal resources are favored over those with higher demands, a principle known as the \emph{$R^*$-rule}.

Recently, Gonz\'alez Casanova et al. (2020) \cite{GMP20} proposed an extension of the Wright--Fisher model with selection, incorporating reproductive costs to explore their evolutionary consequences. In their framework, the population consists of two types of individuals, where reproductive success is influenced by both genic selection and a resource consumption strategy. Notably, these consumption strategies depend solely on the type of the individual. This approach provides a stepping stone toward understanding how resource allocation in reproduction can influence evolutionary fitness, potentially in ways that deviate from traditional biological models.

More specifically, it is assumed in \cite{GMP20} that individuals either require a fractional resource unit $\del\in (0,1)$ or a full resource unit $1$ to reproduce and that individuals requiring $1$ resource unit per reproduction have a selective advantage. This differentiation leads to two distinct reproductive strategies based on resource consumption, which can influence the evolutionary trajectory of the population. Mutation or more complex forms of selection are not considered. As a result of this framework, the population size is no longer fixed but becomes a stochastic variable that fluctuates over time (see \cite{GMP20} for comparison to other models with variable population sizes). The large population limit of the model treated in \cite{GMP20} is dual to a branching process with interaction. This duality relation is a special case of a more general result stated in \cite[Theorem 2]{GP21}. We refer to \cite{OP24+} and \cite{CGP24} for further discussion on branching processes with interactions, where the implications of stochastic fluctuations in population size and their impact on evolutionary processes are explored in more depth.

The objective of this paper is to study an extension of the model described above by incorporating general forms of (frequency-dependent) selection and mutation. More precisely, we consider a discrete-time, finite population model with a fixed amount $\K \in \Rb_{+}$ of resources available for reproduction in each generation. These are consumed each time a new individual is produced. There are two types of individuals, called type-$0$ and type-$1$, whose distinguishing feature is the amount of resources needed to produce them. Namely, it takes a fraction $\del\in(0,1)$ of resource units to produce a type-$0$ individual and one unit of resources to produce a type-$1$ individual. To form a new generation, the individuals are sequentially sampled (with replacement) from the current generation,  each time subtracting the required amount of resources from those still available. This continues until the quantity $\K$ is completely used up or exceeded, where one can imagine that in the latter case the missing quantity of resources for the production of the last individual is taken from an internal storage. The amount of resources needed to produce a new generation may therefore exceed $\K$ (but not $\K+1$). We usually interpret the two types as two sizes (see Remark \ref{remark:interpretation} and Figure \ref{fig.illustration}) and therefore refer to this extension as the \textit{two-size Wright--Fisher model}. We examine the evolutionary impact of different consumption strategies on the frequency process of type-$0$ individuals within this framework, developing a new method for a comprehensive treatment, based on renewal theory. The limiting process is characterized as the solution of a stochastic differential equation (SDE).  It is also worth noting that in the special case where our model coincides with that of \cite{GMP20}, our main theorem shows that the drift term derived in \cite[Theorem 1]{GMP20} is not correct, as will be further explained after the statement of our result (see Remark \ref{rem:main}). A similar family of SDEs was considered in \cite{Gillespie:1974}, which studied the selective effects of within-generation variance on the offspring number, see Remark \ref{remark gillespie}.

A more formal introduction of the model is provided in Section \ref{subsec:model}, but already at this point the connection of the one-step dynamics to renewal theory is evident: the amount of consumed resources equals the sum of iid positive random variables (taking values $\del$ or $1$), and hence a renewal process with Bernoulli-type increments. The (varying) population size corresponds to the first passage time $\tau(\K)$ above level $\K$ of this renewal process, see Section \ref{subsec:connection} for details. 

\vspace{.2cm}
Our main goal is to establish the asymptotic behavior of the frequency process as the amount of resources $R$ tends to infinity. Let $X_{t}^{\K}$, $t\geq0$, denote the proportion of type-$0$ individuals at time $t$ in the model with available resources $\K$. Our main result, Theorem \ref{main}, states that, under suitable conditions, the time-scaled process $\big(X_{\lfloor \K t\rfloor}^{\K}\big)_{t\geq 0}$ converges, as $\K\to\infty$, to the solution of the SDE
$$ \dd X_{t}\ =\ \big(-(1-\del) X_{t}(1-X_{t}) + \rho(X_{t})\big)\,\dd t + \sqrt{X_{t}(1-X_{t})\big(1-(1-\del) X_{t}\big)}\, \dd B_{t}, $$
where $B$ denotes a standard Brownian motion and $\rho:[0,1]\to\Rb$ is an appropriate Lipschitz function summarizing the effects of selection and mutation. Note that the size parameter $\del$ affects both the drift term and the diffusion term of the SDE. We will refer to the solution of this SDE as the \emph{two-size Wright--Fisher diffusion}.

The result will be obtained by showing uniform convergence of the respective generators. Owing to the connection to renewal theory, this uniform convergence leads to certain uniform renewal-type convergence results for random walks with Bernoulli-type increments, as indicated above. We establish uniform versions of the elementary renewal theorem and a uniform version of Blackwell's renewal theorem. These versions cannot be deduced from existing uniform renewal theorems in the literature (see \cite{BorovkovFoss:1999,Lai:1976}) and may therefore be of interest in their own right.

An obvious modification of the model is to complete a generation with the last individual whose reproduction costs are fully covered by the remaining resources. This variant is shortly described in Subsection \ref{sec:variant} together with a statement of a counterpart of Theorem \ref{main}.

We have organized our work as follows. Section \ref{sec:main} introduces the model in detail, states our main result (Theorem \ref{main}) and also explains the connection to renewal theory. In Section \ref{sec:URT}, the required uniform convergence results from renewal theory are established, which then allow us to give the proof~of Theorem \ref{main} in Section \ref{sec:proof main}. We finish with two sections providing short discussions of the afore-mentioned model variant (Section \ref{sec:variant}) and of the two-size Wright--Fisher diffusion (Section \ref{sec:asympt}).

\section{Main Result} \label{sec:main}
\subsection{The model and its large population limit}\label{subsec:model}

We consider the evolution in discrete time (generations) of a finite population of haploid individuals, each of which can be of type $0$ or type $1$. A constant amount $R \in \Rb_{+}$ of resources is available in each generation, reserved for reproduction and consumed in the formation of the next generation. The cost (in resource units) of placing an offspring of type $0$ into the next generation is $\del \in (0,1)$, while the cost for an offspring of type $1$ is $1$. If the relative proportion of type-$0$ individuals in the current generation is $x \in [0,1]$, the next generation is formed by sequentially sampling (with replacement) from the current generation. Each sampled individual produces an offspring according to the following rules.
\begin{enumerate}\itemsep3pt
\item If $k \in \Nb_{0}$ individuals of types $r_1, \ldots, r_k$ have already been placed in the next generation at cost
$$
R_k \coloneqq \sum_{i=1}^{k} \big((1 - r_i)\del + r_i\big)
$$
(with $R_k \coloneqq 0$ if $k = 0$), and if $R_k < R$, the $(k+1)$-th individual is created as follows. First, a parent is randomly selected according to fitness: the parent is of type $\hat{r}_{k+1} = 0$ with probability $s_{R}(x)$ and of type $\hat{r}_{k+1} = 1$ with probability $1 - s_{R}(x)$. It is natural to assume that $s_{R}(0) = 0$ and $s_{R}(1) = 1$. The selected parent then produces the $(k+1)$-th individual, which mutates to type $i \in \{0,1\}$ with probability $\beta_{i,R}$ (so $r_{k+1} = i$), or retains the parental type with probability $1 - \beta_{0,R} - \beta_{1,R}$ (so $r_{k+1} = \hat{r}_{k+1}$).

\item If $R_{k+1} < R$, replace $k$ by $k+1$ and return to Step (1). Otherwise, if $R_{k+1} \geq R$, the reproduction process terminates, and the next generation consists of the first $k+1$ individuals.
\end{enumerate}

The function $s_{R}$ models frequency-dependent selection, and the constants $\beta_{0,R}$ and $\beta_{1,R}$ represent mutation probabilities. Mutations are parent-independent, so silent events are allowed, where an individual of type $i$ mutates to type $i$. By construction, the probability of selecting a parent that places an offspring of type $0$ in the next generation is
\begin{equation}\label{sel-mut}
\rho_{\K}(x)\coloneqq s_{R}(x)\big(1-\beta_{1,R}\big)+\big(1-s_{R}(x)\big)\beta_{0,R}.
\end{equation}
The function $\rho_{R}:[0,1]\to[0,1]$ will play a key role in our analysis, as will become apparent in Section~\ref{subsec:connection}. In fact, except for Remarks~\ref{rem:main} and~\ref{special cases}, our results will be stated in terms of the function $\rho_{R}$, without assuming the particular form~\eqref{sel-mut}. Moreover, the next remark shows that any function $\rho_{R}:[0,1]\to[0,1]$ that attains its maximum and minimum at the boundary points $0$ and $1$ can always be interpreted as resulting from a combination of frequency-dependent selection and mutation.

\begin{figure}[h]
\centering 
\begin{subfigure}{0.49\textwidth}
\includegraphics[width=0.99\textwidth]{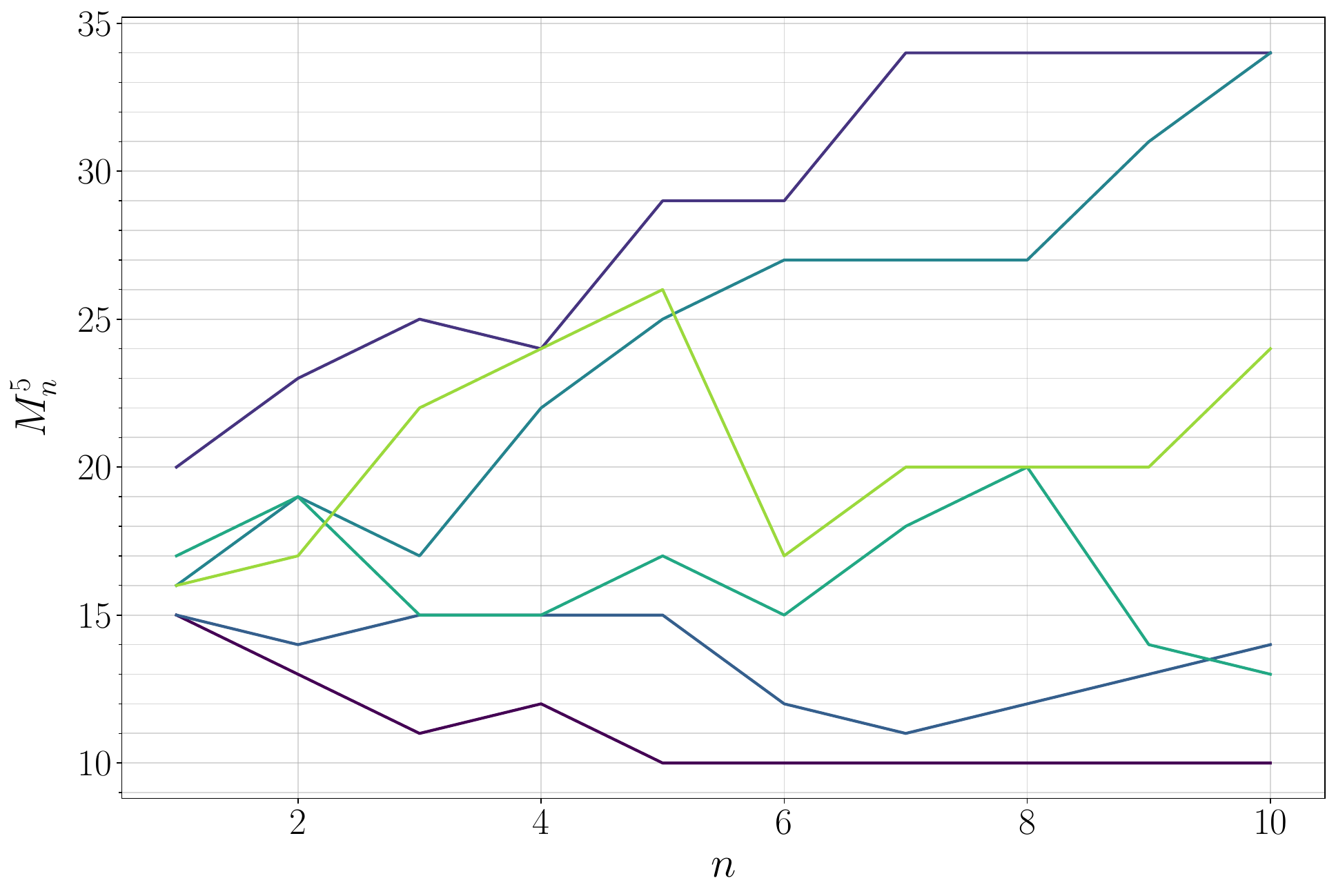}
\end{subfigure}
\begin{subfigure}{0.49\textwidth}
\includegraphics[width=0.99\textwidth]{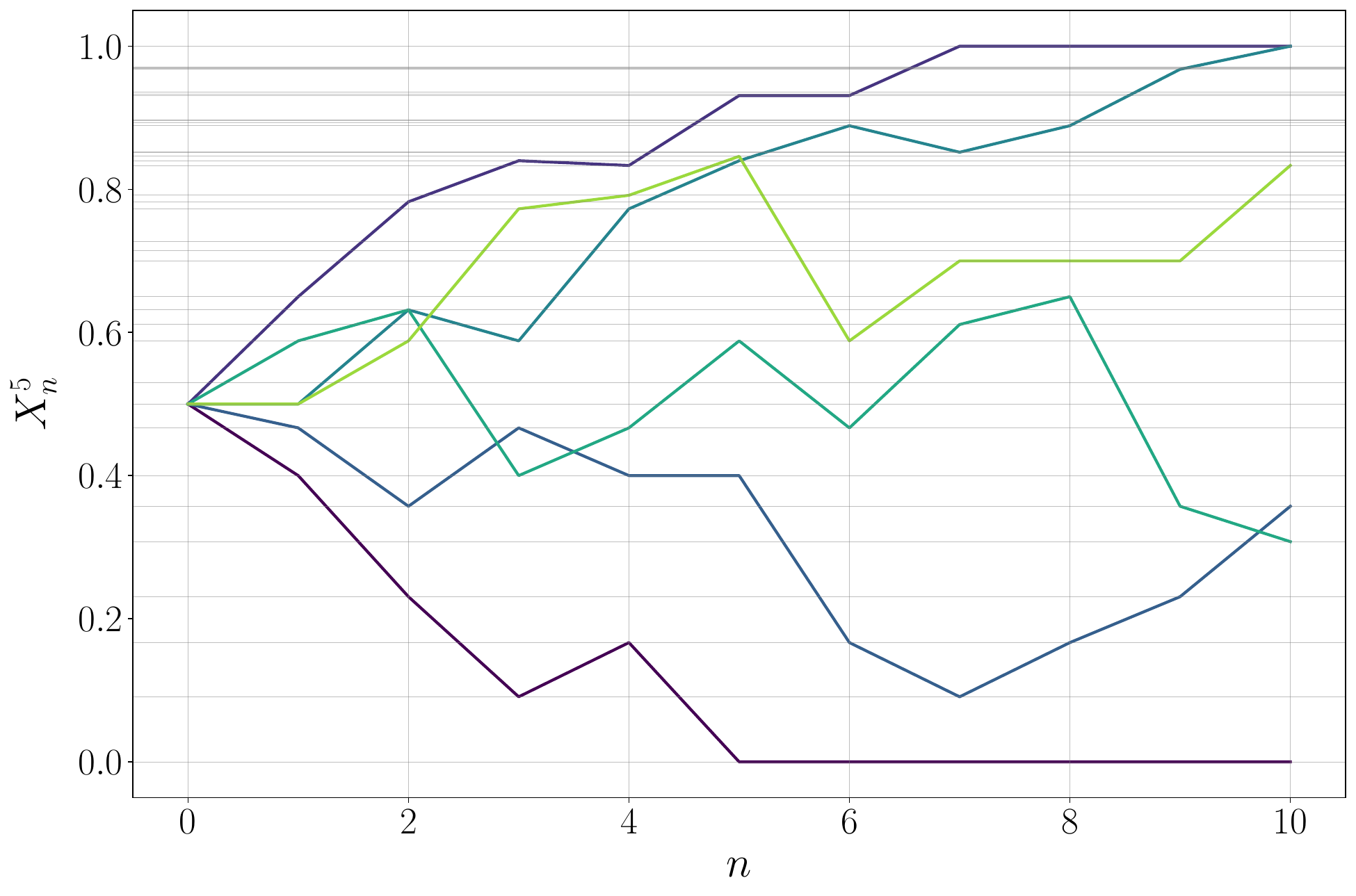}
\end{subfigure}
\caption{Simulation of the population size $M_{n}^R$ (\textit{left}) and the proportion of type-$0$ individuals $X_{n}^R$ (\textit{right}) for six populations of a two-size Wright--Fisher model with parameters $\K=5$, $\del=0.3$, $\rho_{R}(x)=x$ and $x_{0}=0.5$. The horizontal gray lines show the respective codomain.}
\label{simulation1}
\end{figure}

\begin{remark}\label{formofrhoR}
Although the selection-mutation mechanism underlying Eq.~\eqref{sel-mut} is biologically intuitive, our analysis is not restricted to that specific form of the function $\rho_{R}:[0,1]\to[0,1]$. However, if we are given a function $\rho_{R}$ such that
$$
\rho_{R}(0) \wedge \rho_{R}(1) \leq \rho_{R}(x) \leq \rho_{R}(0) \vee \rho_{R}(1) \quad \text{for all } x \in [0,1],
$$
we can always express it in the form~\eqref{sel-mut} by setting
$$
\beta_{0,R} \coloneqq \rho_{R}(0), \quad \beta_{1,R} \coloneqq 1 - \rho_{R}(1), \quad \text{and} \quad s_{R}(x) \coloneqq \frac{\rho_{R}(x) - \rho_{R}(0)}{\rho_{R}(1) - \rho_{R}(0)},
$$
provided that $\rho_{R}(0) \neq \rho_{R}(1)$. In the case where $\rho_{R}(0) = \rho_{R}(1)$, the function $s_{R}$ can be chosen arbitrarily. Note that the assumptions made for the function $\rho_{R}$ imply that $\beta_{0,R},\beta_{1,R}\in [0,1]$, $s_{R}:[0,1]\to[0,1]$, $s_{R}(0)=0$, and $s_{R}(1)=1$.
\end{remark}

\smallskip

Note that the population size is random (as $ \del<1$) and varies from generation to generation. Let $M_{n}^{\K}$ denote the population size in generation $n\in\Nb$ and $X_{n}^{\K}$ the relative proportion of type-$0$ individuals in this generation. Initially, there are $M_{0}^{\K}=m_{0}$ individuals and a relative proportion $X_{0}^{\K}=x_{0} \in [0,1]$ of type-$0$ individuals. We refer to this model as the \emph{two-size Wright--Fisher model with frequency-dependent selection and mutation}. 
Figure \ref{simulation1} shows a realization of the model. 

\begin{remark}\label{remark.eff_pop_size}
   Assume $X^R_{n}=x$ and that $R$ is large. From the construction of the model it is easy to see that the population size satisfies
\begin{equation*}
   M_{n}^R= \frac{R}{1-(1-\del)x}+O(1).
\end{equation*}
\end{remark}

\begin{remark}\label{remark:interpretation}
We typically interpret types as sizes (lengths), where individuals can be of length $\del$ or length $1$, and refer to them as \textit{small} (type $0$) or \textit{large} (type $1$). During the reproduction step, new individuals are added to the population until the total size reaches $\K$, which corresponds to the space available per generation.
\end{remark}

Figure \ref{fig.illustration} illustrates a reproduction step in this model with the above interpretation in mind.

\begin{figure}[h!] \centering
\begin{tikzpicture}
		\draw[opacity=1,thick] (0,0) -- (0,2); 
	
		\node  at (0,2.2) {\footnotesize $0$}; 
		
		\node[align=right] at (-0.3,1.45) { \footnotesize $n$}; 
		\node[align=right] at (-0.5,0.6) {\footnotesize $n+1$};
		\draw[draw=black, thick, fill=mydarkblue, postaction={
        pattern=grid, pattern color = black}, opacity=1] (0,1.2) rectangle ++(2.1,0.5); 
		\draw[draw=black, thick, fill=mygreen, postaction={
        pattern=dots, pattern color = black}] (2.1,1.2) rectangle ++(0.8,0.5);
		\draw[draw=black, fill=mypurple,  thick] (2.9,1.2) rectangle ++(0.8,0.5);
		\draw[draw=black, fill=mypetrol, postaction={
        pattern=north east lines, pattern color = black}, thick] (3.7,1.2) rectangle ++(2.1,0.5);
		\draw[draw=black, fill=mylightgreen, postaction={
        pattern=crosshatch, pattern color = black}, thick] (5.8,1.2) rectangle ++(0.8,0.5);
		\draw[draw=black, fill=mylightblue, postaction={
        pattern=north west lines, pattern color = black}, thick] (6.6,1.2) rectangle ++(0.8,0.5);

		\draw[draw=black,  fill=mylightblue, postaction={
        pattern=north west lines, pattern color = black}, thick] (0,0.4) rectangle ++(0.8,0.5);
		\draw[draw=black, thick, fill=mypetrol, postaction={
        pattern=north east lines, pattern color = black},] (0.8,0.4) rectangle ++(2.1,0.5);
		\draw[draw=black, fill=mylightgreen, postaction={
        pattern=crosshatch, pattern color = black},thick] (2.9,0.4) rectangle ++(2.1,0.5);
		\draw[draw=black, fill=mypurple, thick] (5,0.4) rectangle ++(0.8,0.5);
		\draw[draw=black, fill=mypetrol,  postaction={
        pattern=north east lines, pattern color = black},thick] (5.8,0.4) rectangle ++(2.1,0.5);
		
		\draw[opacity=1,thick] (7,0) -- (7,2);
		\node at (7,2.2) {\footnotesize $\K$};
		\end{tikzpicture} 
\caption{Illustration of a sample construction of generation $n+1$ in a two-size Wright--Fisher model. The two sizes of the rectangles correspond to the two types: small rectangles represent small individuals. Each individual in generation $n$ is assigned a color and a pattern for identification. The individuals from generation $n$ are sampled, and their offspring are placed one after another from left to right until the capacity $R$ is reached. In this example three individuals from generation $n$ place exactly one offspring in generation $n+1$, one individual places two offspring and two individuals do not place any offspring. Note that the offspring of the small yellow individual mutates from small to large.} 
\label{fig.illustration}
\end{figure}

The main objective of this paper is to demonstrate that, as the resource parameter $\K$ becomes large, the appropriately scaled version of our model converges to a Wright--Fisher-type diffusion. This limiting process features a drift term that incorporates the effects of the stopping rule, frequency-dependent selection, and mutation, along with a non-standard diffusion coefficient.

\begin{theorem}\label{main}
Let $\rho:[0,1] \to \Rb$ be a Lipschitz continuous function. Suppose that $X_{0}^{\K}\to x_{0}\in[0,1]$ in probability and that $\K (\rho_{\K}(x)-x) \to \rho(x)$ uniformly in $x\in[0,1]$, as $\K\to\infty$. Then the process $\big(X_{\lfloor \K t\rfloor}^{\K}\big)_{t\geq 0}$ converges in distribution to the solution $(X_{t})_{t\geq 0}$ of the stochastic differential equation
\begin{equation} \label{eq.SDE}
\dd X_{t}\ =\ \big( - (1-\del) X_{t} (1-X_{t}) + \rho(X_{t}) \big)\,\dd t\,+\,\sqrt{X_{t}(1-X_{t})\big(1-(1-\del) X_{t}\big)}\,\dd B_{t},
\end{equation}
with initial condition $X_{0}=x_{0}$, where $B$ denotes a standard Brownian motion.
\end{theorem}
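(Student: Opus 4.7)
The plan is to apply the standard convergence theory of Markov chains to diffusions: since $[0,1]$ is compact, weak convergence of $\bigl(X^R_{\lfloor Rt\rfloor}\bigr)_{t\geq 0}$ to the well-posed SDE \eqref{eq.SDE} reduces to the uniform convergence of the discrete generators $\mathcal{L}^R f(x) := R\bigl(\Eb[f(X_{n+1}^R)\mid X_n^R = x] - f(x)\bigr)$ to
\[
Lf(x) \,=\, \bigl(-(1-\del)x(1-x) + \rho(x)\bigr)f'(x) + \tfrac12 x(1-x)(1-(1-\del)x) f''(x),
\]
for every $f\in C^3([0,1])$, plus the vanishing of $R\,\Eb[|X_{n+1}^R-x|^3\mid X_n^R=x]$ uniformly in $x$. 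Well-posedness of \eqref{eq.SDE} follows from the Lipschitz drift and a Yamada--Watanabe-type argument (the diffusion coefficient is H\"older-$1/2$ and vanishes only at the boundary $\{0,1\}$), while tightness on Skorokhod space is immediate from the compact state space together with the uniform $O(1/R)$ bound on the conditional variance of the increments obtained below.

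The first concrete step is a renewal-theoretic representation. Conditionally on $X_n^R = x$, the resource consumptions are $\xi_i = 1 - (1-\del)Y_i$, where $Y_i$, $i\geq 1$, are i.i.d.\ Bernoulli random variables with parameter $\rho_R(x)$ indicating whether the $i$-th sampled offspring is small ($Y_i=1$) or large ($Y_i=0$). With $S_k := \sum_{i=1}^k \xi_i$ and the first-passage time $M := M_x^R = \inf\{k\colon S_k \geq R\}$, a direct computation gives
\[
X_{n+1}^R - \rho_R(x) \,=\, \frac{\mu_x^R\, M - S_M}{(1-\del)\,M},
\]
where $\mu_x^R := 1-(1-\del)\rho_R(x)$ and $(\sigma_x^R)^2 := (1-\del)^2 \rho_R(x)(1-\rho_R(x))$ are the mean and variance of the Bernoulli-type renewal increments. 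Writing $X_{n+1}^R - x = (\rho_R(x)-x) + (\mu_x^R M - S_M)/((1-\del)M)$ splits the one-step increment into a deterministic bias of order $1/R$ and a centered stochastic piece.

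By Taylor's formula, the uniform generator convergence reduces to the uniform asymptotics of the first two conditional moments of $X_{n+1}^R - x$. The bias contributes $R(\rho_R(x)-x)\to \rho(x)$ directly by hypothesis. For the centered piece, Wald's identity gives $\Eb[\mu_x^R M - S_M] = 0$, but the $1/M$ factor introduces a Jensen-type correction: expanding $1/M$ around its mean (the elementary renewal theorem gives $\Eb[M]\sim R/\mu_x^R$) and using the classical renewal-variance asymptotic $\mathrm{Var}(M)\sim (\sigma_x^R)^2 R/(\mu_x^R)^3$ yields
\[
R\,\Eb\!\left[\tfrac{\mu_x^R M - S_M}{(1-\del)M}\;\middle|\;X_n^R = x\right] \;\longrightarrow\; -(1-\del)\,x(1-x),
\]
so the total drift is $-(1-\del)x(1-x) + \rho(x)$. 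For the second conditional moment, Wald's variance identity $\Eb[(\mu_x^R M - S_M)^2] = (\sigma_x^R)^2\,\Eb[M]$ combined with the concentration $M \approx R/\mu_x^R$ delivers
\[
R\,\Eb[(X_{n+1}^R - x)^2 \mid X_n^R = x] \;\longrightarrow\; x(1-x)\,(1-(1-\del)x),
\]
matching the squared diffusion coefficient; the required third-moment bound follows analogously via a fourth-moment estimate and Lyapunov's inequality.

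The hard part is uniformity in $x \in [0,1]$. Because the law of the renewal increments $\xi_i$ depends on $x$ through $\rho_R(x)$, every renewal asymptotic used above -- for $\Eb[M_x^R]$, for the overshoot $W_x^R := S_M - R$, and for $\mathrm{Var}(M_x^R)$ -- must be proved uniformly in $x$ as $R\to\infty$. Classical renewal theorems provide only pointwise statements, and the existing uniform versions in the literature do not cover the present Bernoulli-type setting with parameter-dependent law. This is precisely what motivates Section~\ref{sec:URT}: the uniform versions of the elementary renewal theorem and of Blackwell's renewal theorem for binary non-arithmetic increments established there (the latter via an $\varepsilon$-coupling) supply exactly the uniform control required. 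Once these tools are in place, the moment asymptotics above go through uniformly in $x$, yielding the uniform generator convergence and hence Theorem~\ref{main}.
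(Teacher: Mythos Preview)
Your high-level strategy coincides with the paper's: existence/uniqueness via Yamada--Watanabe, then convergence of the time-scaled chain to the diffusion by showing uniform convergence of the discrete generators, which in turn reduces to the uniform asymptotics of $R\,\Eb_x[(X_1^R-x)^k]$ for $k=1,2,3$, all of this set up through the renewal identity $X_1^R-\rho_R(x)=(\mu M-S_M)/((1-\vartheta)M)$. This is exactly the skeleton of Sections~\ref{subsec:proof strategy via uniform} and~\ref{sec:proof main}.

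Where you diverge is in how the moment asymptotics are extracted. The paper does \emph{not} Taylor-expand $1/M$. Instead it proves an exact combinatorial identity (Lemma~\ref{lemma_cases new}) for $\Ebf_p\big[(S_{\tau(R)}/\tau(R))^m\big]$ obtained by multinomially expanding $S_n^m$ on $\{\tau(R)=n\}$ and peeling off all but the last increment; the resulting integrals are then evaluated via the uniform convergence of the \emph{stopping summand} $\xi_{\tau(R)}$ (Proposition~\ref{uniform stopping summand}, the uniform Blackwell theorem proved by $\varepsilon$-coupling) together with Proposition~\ref{prop:uniform key renewal}. Your route---Wald~I and~II plus a second-order expansion of $1/M$ around $\bar M$ with remainder controlled by the deterministic bounds $R\le M\le (R+1)/\vartheta$---is a genuine alternative. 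In fact, carried out carefully it is \emph{lighter} than the paper's: writing $S_M=R+W$ with $W\in[0,1)$, one finds that the two occurrences of $\mu\,\Eb[W]$ (from $R^2/\bar M$ and from $R\,\Eb[W/M]$) cancel exactly, so only the boundedness of the overshoot is used, never its limiting law. Thus your argument does not actually require the uniform Blackwell theorem at all; what it needs uniformly is $\mu^2\mathrm{Var}(M)=\sigma^2 R+O(R^{1/2})$ (which the paper obtains from Wald~II and Cauchy--Schwarz inside the proof of Proposition~\ref{prop:uniform key renewal}, with no appeal to Blackwell) and a uniform bound of the type $\Eb[|M-\bar M|^3]=O(R^{3/2})$ for the Taylor remainder (obtainable from Burkholder--Davis--Gundy for the bounded-increment martingale $(\mu n-S_n)$ stopped at $M$). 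You should make this explicit rather than invoking Section~\ref{sec:URT} wholesale: as written, you list ``the overshoot $W_x^R$'' among the quantities needing a uniform renewal theorem, which over-states what your own method requires. A minor point: what you call a ``Jensen-type correction'' is really the covariance term $-\mathrm{Cov}(\mu M-S_M,\,M)/\bar M^2$; it reduces to $-\mu\,\mathrm{Var}(M)/\bar M^2$ only after noting $\mathrm{Cov}(S_M,M)=\mathrm{Cov}(W,M)=O(\sqrt{R})$, which deserves a line.
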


We emphasize that, unlike $\rho_{\K}$, the parameter $\del$ does not scale with $\K$. Figure~\ref{fig.trajectories} shows simulations of the finite model alongside sample paths of the limiting SDE \eqref{eq.SDE}.

\begin{remark}\label{remark_rho_decom}
The existence and uniqueness of the solution to the SDE \eqref{eq.SDE} follow from \cite[Theorem 1]{YamadaWatanabe:1971}. Moreover, since $\rho_{R}(x) \in [0,1]$ for all $x \in [0,1]$, the limiting function $\rho$ in Theorem~\ref{main} must satisfy the boundary conditions $\rho(0) \geq 0$ and $\rho(1) \leq 0$. This ensures that the solution to the SDE \eqref{eq.SDE} remains within the interval $[0,1]$ for all $t \geq 0$. Define $\beta_{0} \coloneqq \rho(0) \geq 0$, $\beta_1 \coloneqq -\rho(1) \geq 0$, and introduce the function $\sigma(x) \coloneqq \rho(x) - \beta_{0}(1 - x) + \beta_1 x$. Then $\rho$ can be decomposed as
\begin{equation} \label{eq._rho_decom}
\rho(x) = \sigma(x) + \beta_{0}(1 - x) - \beta_1 x.
\end{equation}
Since $\rho$ is Lipschitz continuous by assumption, the same holds for $\sigma$. Additionally, $\sigma$ satisfies the boundary conditions $\sigma(0) = \sigma(1) = 0$. Thus, this decomposition highlights that $\rho$ can be interpreted as comprising a frequency-dependent selection component (represented by $\sigma$) and a mutation component (captured by $\beta_{0}$ and $\beta_1$).
Furthermore, if $\rho_{R}$ admits the decomposition \eqref{sel-mut} with $\beta_{0,R}, \beta_{1,R} \in [0,1]$ and $s_{R} : [0,1] \to [0,1]$ satisfying $s_{R}(0) = 0$ and $s_{R}(1) = 1$, then the uniform convergence of $R(\rho_{R}(x) - x)$ to $\rho(x)$ as $R \to \infty$ implies that
$$ R\, \beta_{i,R}\xrightarrow[R\to\infty]{}\beta_i,\,i\in\{0,1\},\quad\text{and}\quad \sup_{x\in[0,1]}\lvert R\big(s_{R}(x)-x\big)-\sigma(x)\rvert\xrightarrow[R\to\infty]{}0. $$
\end{remark}

\begin{figure}[h] 
\begin{subfigure}[c]{0.49\textwidth}
\includegraphics[width=0.99\textwidth]{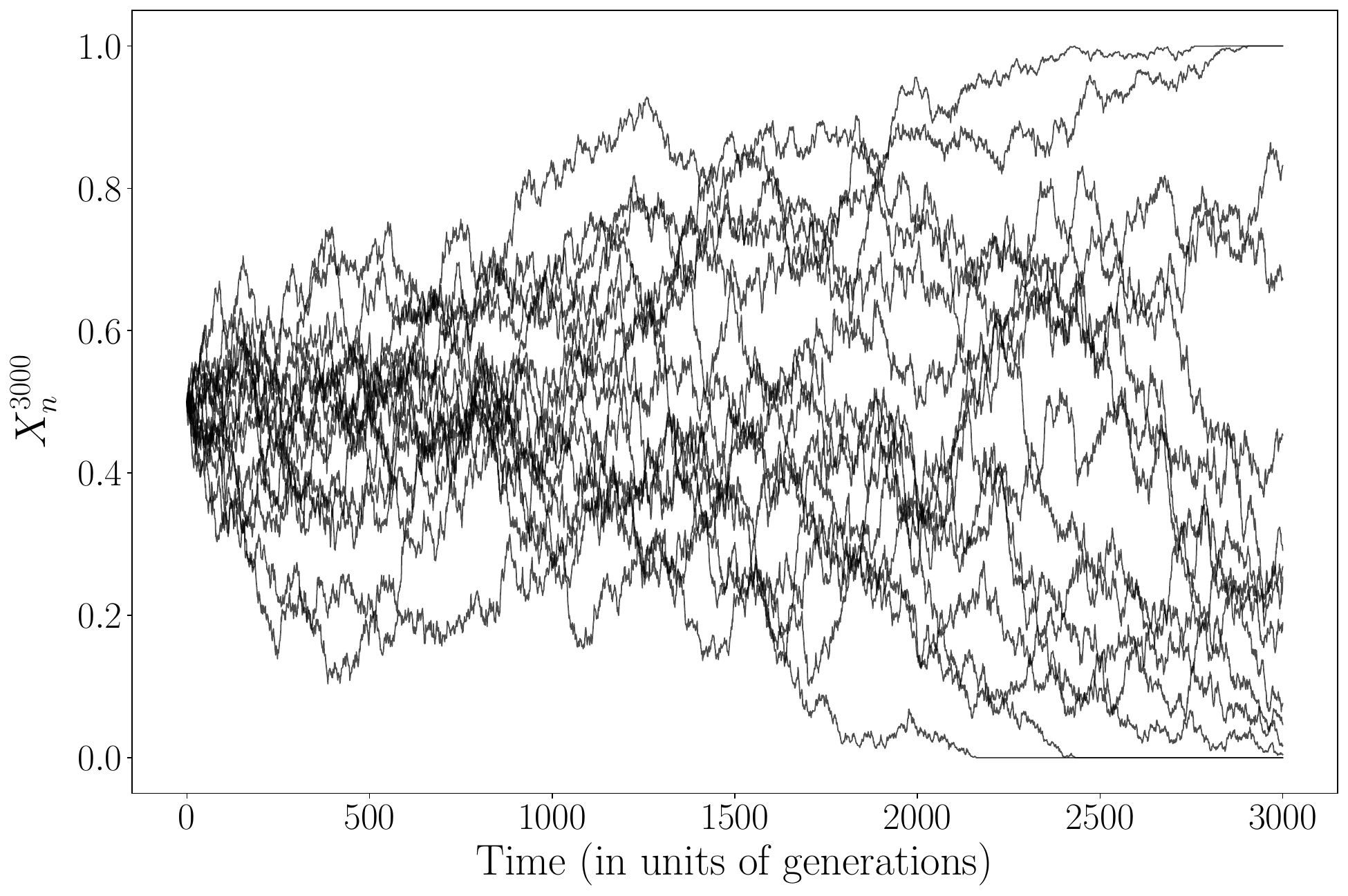} 
\label{subfig.left}
\end{subfigure}
\begin{subfigure}[c]{0.49\textwidth}
\includegraphics[width=0.99\textwidth]{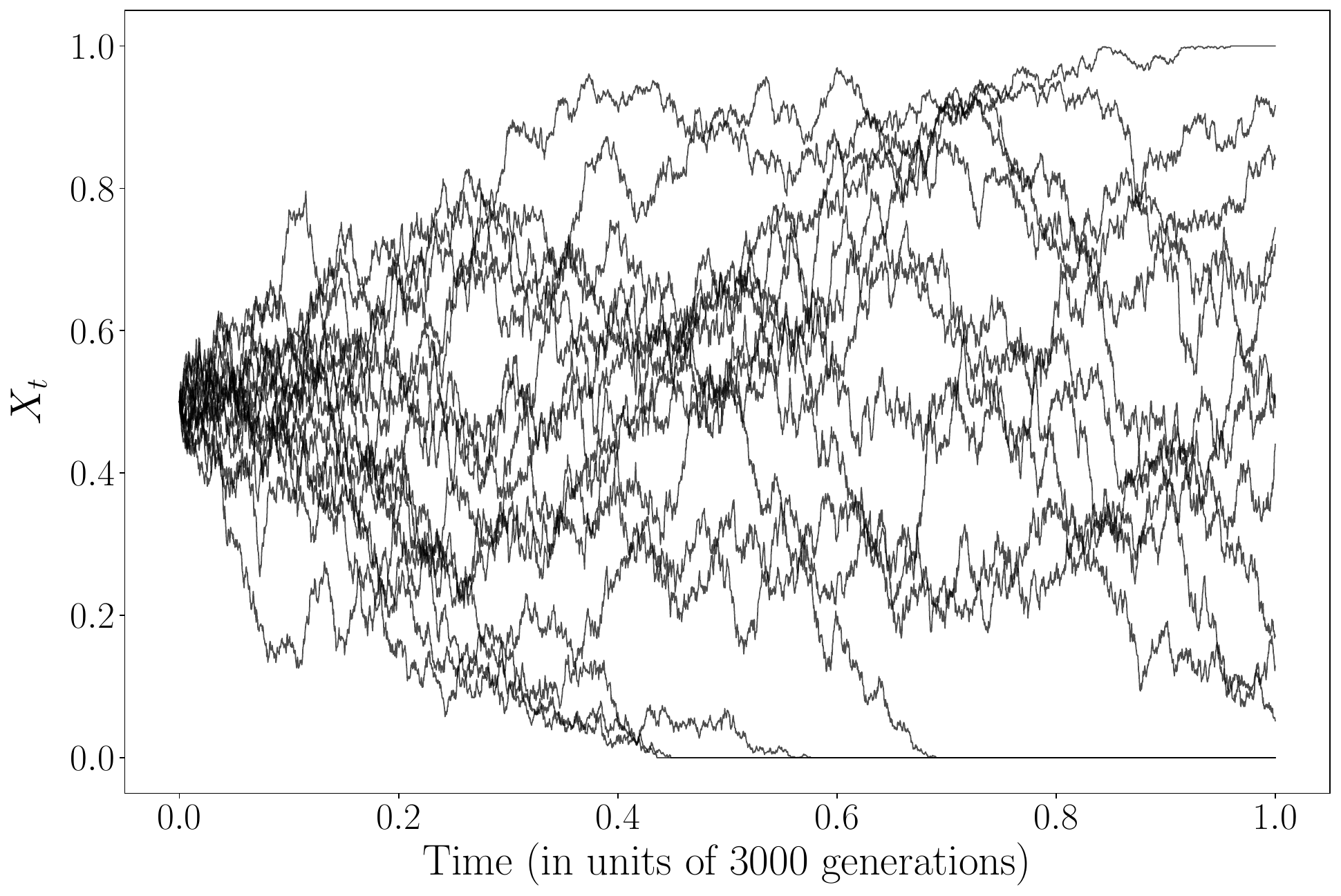} 
\label{subfig.right}
\end{subfigure}
\vspace{-1\baselineskip}
\caption{Simulations of the evolution of the proportion of small individuals in the finite model (\textit{left}) with $\rho_{\K}(x) = x$, and sample trajectories of the limiting SDE (\textit{right}) with $\rho \equiv 0$. In both cases, the parameters are $\del = 0.6$ and $x_{0} = 0.5$. The finite model was simulated for 3000 generations with $R = 3000$. SDE~trajectories were generated using the Euler method with step size $h = 1/3000$.} 
\label{fig.trajectories}
\end{figure}

\begin{remark}\label{rem:main}
Theorem~\ref{main} extends Theorem 1 in \cite{GMP20}, which considers the special case of genic selection favoring type-1 individuals without mutation, i.e.
$$ s_{R}(x)\,=\,\frac{(1-sR^{-1})x}{1-sR^{-1}x}\quad \text{and} \quad \beta_{0,R}\,=\,\beta_{1,R}\,=\,0, $$
so that $\rho_{R}(x) = s_{R}(x)$ and $\rho(x) = -s x(1 - x)$. In \cite{GMP20}, this setup is referred to as the \emph{Wright--Fisher model with efficiency}, where "efficient" denotes the small individuals. We avoid the term “efficiency” here, as it indicates an inherent advantage for small individuals -- an interpretation not supported by our findings.
Theorem~\ref{main} reveals that the drift term derived in \cite{GMP20} is in fact incorrect. That work claims~that the consumption strategy -- represented by the parameter $\vartheta$ -- has no effect on the drift term in the diffusion limit, which is asserted to coincide with the drift in the classical Wright--Fisher diffusion with genic selection. However, our results show that the drift consists of two components:
\begin{enumerate}[left=1pt]
\item A stopping bias term, explicitly depending on the size parameter $\vartheta$, which encodes a disadvantage for small individuals introduced by the stopping rule. This effect is analogous to the waiting-time paradox, which in our context implies that the event that the last individual in a generation is large has at least probability $1 - \rho_{R}(x)$; see Subsection~\ref{subsec:stop summand}. The magnitude of this disadvantage scales with $1 - \vartheta$, meaning that smaller values of $\vartheta$ intensify the effect.
\item A selection term, accounting for the sampling probabilities $\rho_{\K}$, which reduces to the drift term obtained in \cite[Theorem 1]{GMP20} in their particular setting.
\end{enumerate}
The incorrect conclusion in \cite{GMP20} stems from an assumption of exchangeability, a standard property in classical population genetics models. However, this assumption fails for the two-size Wright--Fisher model, as the stopping rule introduces a structural bias favoring large individuals. This breakdown of exchangeability will become evident in our renewal-theoretic analysis.

\smallskip 

To corroborate our theoretical results, we performed simulations whose outcomes are displayed in Figure~\ref{fig.drift_E1}. As predicted by Theorem~\ref{main}, the quantity $R\,\Eb_{x}[X_{1}^{R} - x]$ from the finite model also approximates the drift term in the limiting SDE \eqref{eq.SDE}; see also \eqref{eq.gen1}.

\begin{figure}[h!]
\begin{subfigure} [c]{0.49\textwidth}
\includegraphics[width=0.99\textwidth]{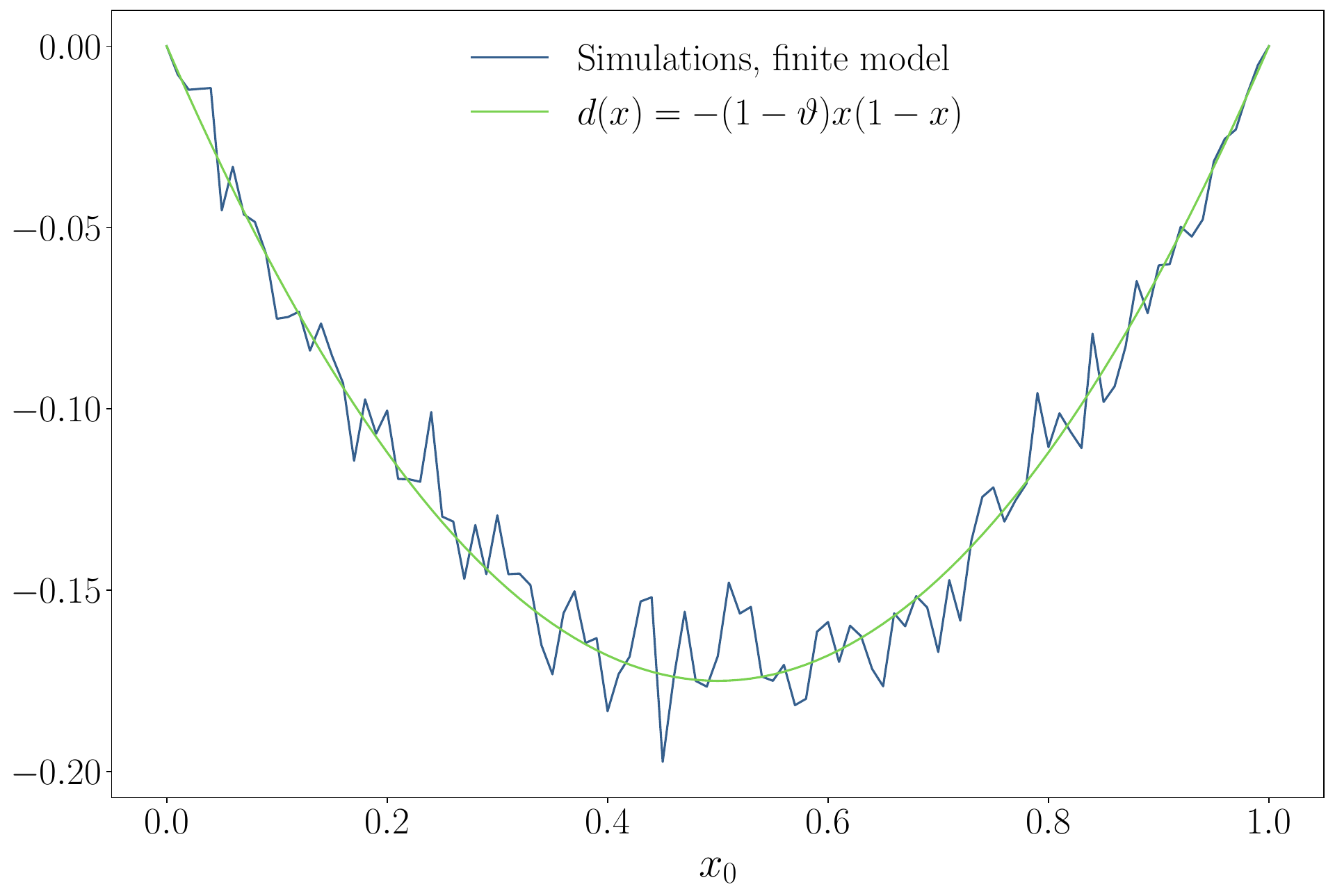} 
\end{subfigure}
\begin{subfigure}[c]{0.49\textwidth}
\includegraphics[width=0.99\textwidth]{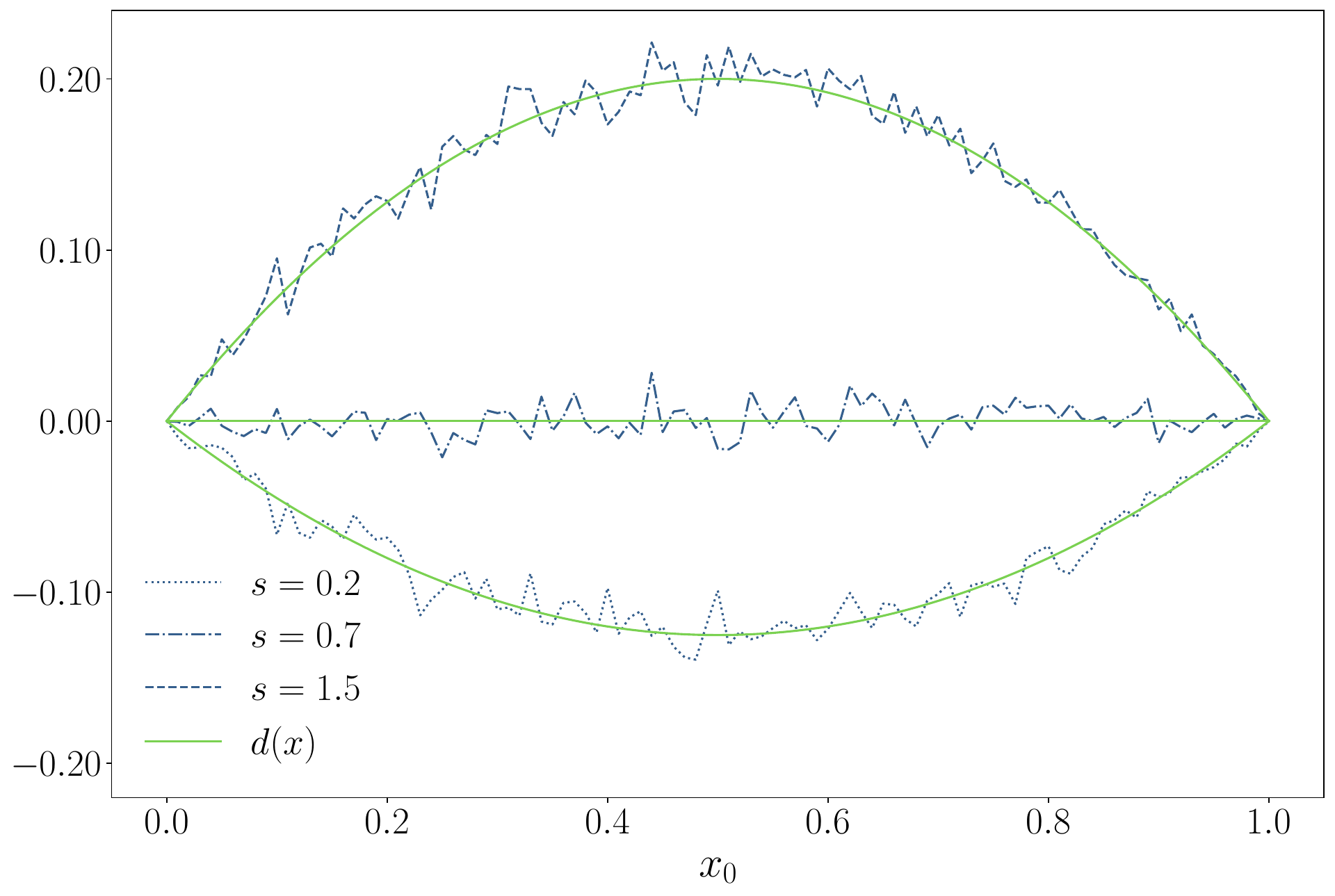}
\end{subfigure}
\caption{Approximation (blue) of $R\,\Eb_{x}[X_{1}^{R} - x_{0}]$ in the two-size Wright--Fisher model without selection (\textit{left}) and with genic selection favoring small individuals (\textit{right}). For each $x_{0} = i/100$, $i \in \{0,\dots,100\}$, we approximated $R\, \Eb_{x}[X_{1}^{R} - x_{0}]$ by simulating $R(X_{1}^{R} - x_{0})$ $10^6$ times and computing the mean. The small individuals' size parameter was set to $\del = 0.3$, and the resource capacity was $R = 1000$. The theoretical drift term $d(x_{0}) = (-(1 - \vartheta) + s)x_{0}(1 - x_{0})$ from the SDE \eqref{eq.SDE} is plotted in green.}\label{fig.drift_E1}
\end{figure}
\end{remark}

\begin{remark}\label{remark gillespie}
Although the diffusion coefficient in \eqref{eq.SDE} differs from the classical Wright--Fisher form $\sqrt{x(1 - x)}$, it is not new in the population genetics literature. For instance, Gillespie \cite{Gillespie:1974} derived the stochastic differential equation
\begin{equation*}
\dd \widetilde{X}_{t}\ =\ \big((\sigma_1^2 - \sigma_{0}^2) + (\mu_{0} - \mu_1)\big)\widetilde{X}_{t}(1 - \widetilde{X}_{t})\,\dd t + \sqrt{\widetilde{X}_{t}(1 - \widetilde{X}_{t})\big(\sigma_{0}^2 \widetilde{X}_{t} + \sigma_1^2(1 - \widetilde{X}_{t})\big)}\,\dd B_{t}
\end{equation*}
as an approximation to the type composition in a discrete-time population model with two types of individuals. For comparison with our model, we refer to them as type-$0$ and type-$1$. The two types differ in the mean and variance of their offspring numbers: the mean number of offspring for type-$i$ individuals is $1 + \mu_i$, and the variance is $\sigma_i^2$. In this formulation, the limiting process $\widetilde{X}_{t}$ tracks the proportion of type-$0$ individuals.
Specializing to the case $\sigma_{0}^2 = \del$, $\sigma_1^2 = 1$, and $\mu_{0} = \mu_1$, Gillespie’s diffusion reduces to
\begin{equation*}
\dd \widetilde{X}_{t}\ =\ (1 - \del)\widetilde{X}_{t}(1 - \widetilde{X}_{t})\,\dd t + \sqrt{\widetilde{X}_{t}(1 - \widetilde{X}_{t})\big(1 - (1 - \del)\widetilde{X}_{t}\big)}\,\dd B_{t},
\end{equation*}
which differs from our SDE \eqref{eq.SDE} with $\rho(x) = 0$ only in the sign of the drift term. In our model, type-$0$ individuals (small-sized) are at a disadvantage due to the stopping rule, whereas in Gillespie's model, type-$0$ individuals (those with lower offspring variance) are favored. One of the central conclusions in \cite{Gillespie:1974} is that reduced variance in offspring number can confer a selective advantage.

Despite the distinct modeling assumptions, the agreement in the diffusion terms is not coincidental. It reflects the fact that differences in size (in our model) and differences in offspring variance (in Gillespie’s model) can lead to the same effective population size. This shared feature explains the identical form of the diffusion coefficient in both settings; see Remark~\ref{remark.eff_pop_size} and \cite[p.~605]{Gillespie:1974}.
\end{remark}

\begin{remark} \label{special cases}
We now provide intuition for the sampling and reproduction mechanisms underlying our model. In general, selection governs how parents are sampled, while mutation determines how offspring are produced. The combined effect of these evolutionary forces is encoded in the function $\rho_{R}$, which specifies the probability that a sampled individual produces a small offspring.

Below, we present several common scenarios covered by our framework and state the corresponding functions $\rho_{R}$ (from the finite model) and $\rho$ (from the drift term in \eqref{eq.SDE}):

\begin{enumerate}[left=4pt]\itemsep2pt
\item No selection, no mutation:
    \begin{equation*}
    s_{R}(x)\,=\,x \quad\text{and}\quad\beta_{0,R}\,=\,\beta_{1,R}\,=\,0,
    \end{equation*}
    so that $\rho_{R}(x)=x$ and $\rho(x)=0$.
\item Genic selection (favoring small individuals): 
\begin{align*}
s_{R}(x)\,=\, \frac{(1+s\K^{-1})x}{1+s\K^{-1}x}  \quad\text{and}\quad\beta_{0,R}\,=\,\beta_{1,R}\,=\,0.
\end{align*}
Consequently $\rho_{R}(x)=s_{R}(x)$ and $\rho(x)=s x(1-x)$, with $s\geq0$. The adaptation to genic selection favoring large individuals is straightforward and yields $\rho(x)=-sx(1-x)$, again with $s\geq 0$, see Theorem \ref{main} and compare with \cite[Theorem 1]{GMP20}.
\item Fittest-type-wins selection (favoring small individuals): 
$$ s_{R}(x)=\,1-\Eb\big[ (1-x)^G \big] \quad\text{and}\quad\beta_{0,R}\,=\,\beta_{1,R}\,=\,0, $$
where $G$ is a $\Nb$-valued random variable with
\begin{equation*}
\Pb(G=1)\,=\,1-\frac{1}{\K} \quad \text{and}\quad\Pb(G=k)\,=\,\frac{s_{k-1}}{\K}\text{ for }k\geq 2,
 \end{equation*}
and weights $s_k \geq 0$ satisfying $\sum_{k = 1}^{\infty} s_k = 1$. In this case, $\rho_{R}(x) = s_{R}(x)$ and the limiting function is
\begin{equation*}
\rho(x)\,=\,s(x)x(1-x) \quad \text{ with } s(x)\,=\,\sum_{k=1}^{\infty} s_{k} (1-x)^{k}.
\end{equation*}
The variable $G$ can be interpreted as the number of "potential parents" in the underlying ancestral picture (see \cite{BEH23}, \cite{GS17}).
\item Diploid selection:
$$s_{R}(x)\,=\,x+\frac{2s}{\K}\,x(1-x)\big((1-2h)x+h\big) \quad\text{and}\quad\beta_{0,R}\,=\,\beta_{1,R}\,=\,0, $$ 
with $s \geq 0$ and $h \in \Rb_+$. Then $\rho_{R}(x) = s_{R}(x)$ and
$$ \rho(x)\,=\,2 s\, x(1-x)\big((1-2h)x+h\big). $$
In the standard Wright--Fisher model the haploid population can also be interpreted as a diploid setting where each genotype $ij \in\{0,1\}^2$ has a fitness value $w_{ij}$, see \cite[Chapter 5]{Ewens:04}. The homozygote $00$ (resp. $11$) reproduces with rate $w_{00} = 1 + 2s$ (resp. $w_{11} = 1$) and the heterozygots have rate $w_{01} = w_{10} = 1 + 2hs$. The parameter $s \geq 0$ controls the strength of selection, and $h$ (the dominance parameter) measures the contribution of allele $0$ to the fitness of a heterozygote. The case $h = 1/2$ corresponds to additive selection (no dominance), $h < 1/2$ models the case where allele $0$ is recessive, $h > 1/2$ represents a setting where allele $0$ is dominant, and $h > 1$ corresponds to balancing selection. Even in the absence of a direct diploid interpretation, the functions $\rho_{R}$ and $\rho$ serve to model various diploid selection regimes in the two-size Wright--Fisher framework.
\item Parent-independent mutation, no selection:
$$ s_{R}(x)\,=\,x \quad \text{and}\quad \beta_{i,R}\,=\,\frac{\beta_i}{R}\,\geq 
\,0. $$ 
Then,
\begin{equation*}
\rho_{\K}(x)\,=\,x\Big(1-\frac{\beta_{1}}{R}\Big)+(1-x)\frac{\beta_{0}}{R}\quad\text{and}\quad \rho(x)\,=\,\beta_{0}(1-x)-\beta_1x.
\end{equation*}
\end{enumerate}
\end{remark}

We will revisit the cases of genic selection and parent-independent mutation in Section~\ref{sec:asympt}, where we derive asymptotic properties of the corresponding two-size Wright--Fisher diffusions using classical diffusion theory.

\subsection{A Model Variant}\label{subsec:model variant}

A natural variant of our model arises by slightly altering the stopping rule: instead of completing a generation with the first individual that causes the total resource consumption to exceed $\K$, one may instead reject this individual and end the generation at the previous one (with the outcome unchanged only when the total exactly equals $\K$). Let $\bX_{n}^R$ denote the corresponding process in this variant. A similar analysis to that of the original model yields the following analogue of Theorem~\ref{main}.

\begin{theorem}\label{thm:model variant}
Under the same assumptions as in Theorem~\ref{main}, the process $\big(\bX_{\lfloor \K t \rfloor}^\K\big)_{t \geq 0}$ converges in distribution to the solution $(\bX_{t})_{t \geq 0}$ of the stochastic differential equation
\begin{equation*} 
\dd \bX_{t}\ =\ \rho(\bX_{t})\,\dd t + \sqrt{\bX_{t}(1-\bX_{t})\big(1-(1-\del) \bX_{t}\big)}\,\dd B_{t} ,
\end{equation*}
with initial condition $\bX_{0} = x_{0}$, where $B$ denotes a standard Brownian motion.
\end{theorem}

The key difference from the SDE in \eqref{eq.SDE} lies in the drift term: for $\rho(x) = 0$, the original model has a drift of $-(1 - \del)x(1 - x)$, whereas the variant has zero drift and is thus neutral. We refer to Section~\ref{sec:variant} for further details and a proof sketch, which closely parallels the argument used for Theorem~\ref{main}.

\begin{remark}
This variant (with $\vartheta \in \Qb$) was also studied in \cite{GMP20}, again in the special case $\rho_\K(x) = \frac{(1 - s \K^{-1}) x}{1 - s \K^{-1} x}$. While we show that the consumption strategy has no impact on the drift in this variant, Theorem 2 of \cite{GMP20} incorrectly suggests a selective advantage for small individuals, and their drift term varies significantly with different values of $\vartheta \in \Qb$. This error stems from the same incorrect assumption discussed for the original model. Our simulations, shown in Figure \ref{fig.E2}, support the theoretical findings presented here.
\end{remark}

\begin{figure}[h!]
\begin{subfigure} [c]{0.50\textwidth}
\includegraphics[width=0.99\textwidth]{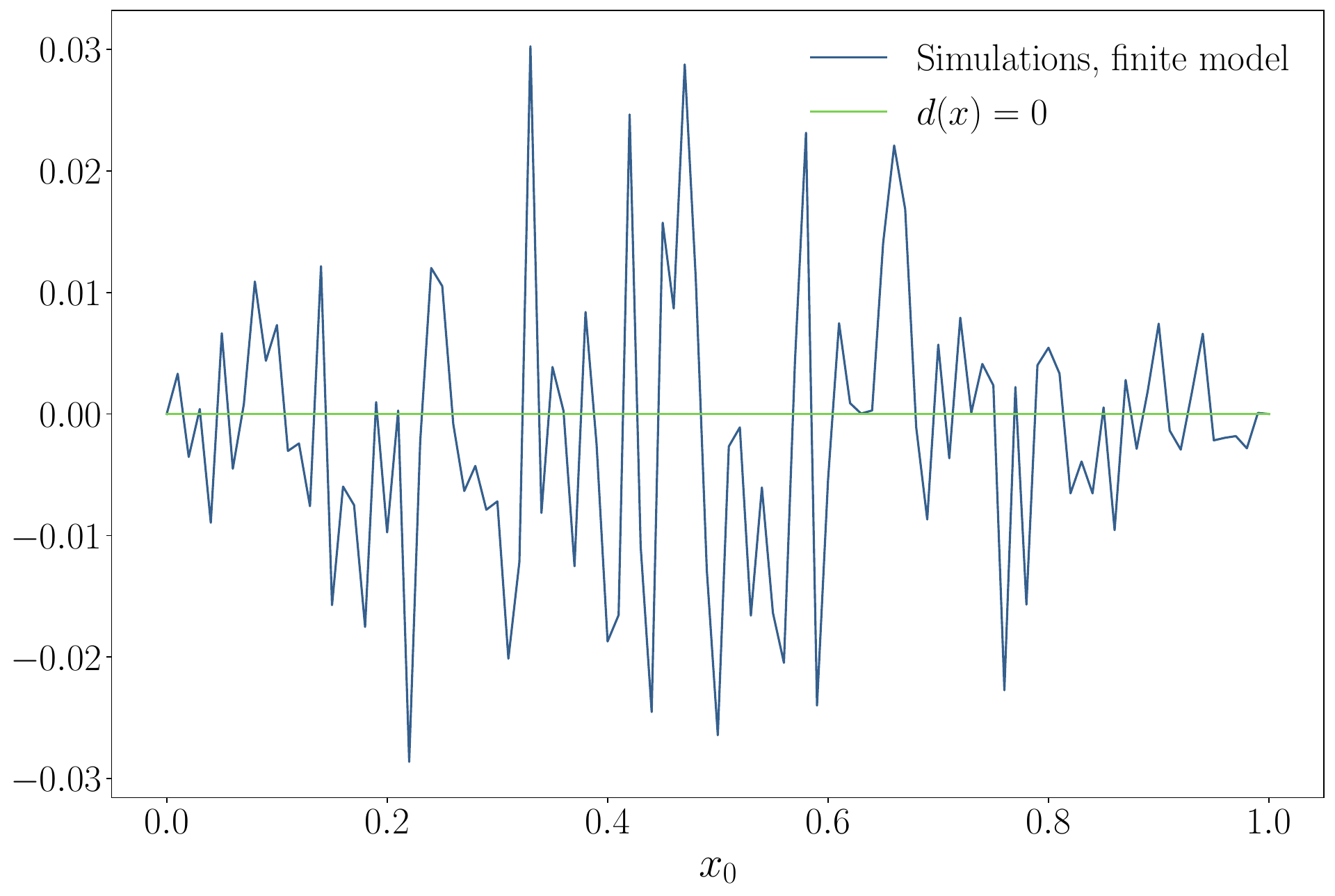}
\caption{$R \, \Eb_{x}[\bX_{1}^{R}-x_{0}]$, with $n_{\text{sim}}=10^6$} \label{driftE2}
\end{subfigure}
\begin{subfigure}[c]{0.49\textwidth}
\includegraphics[width=0.99\textwidth]{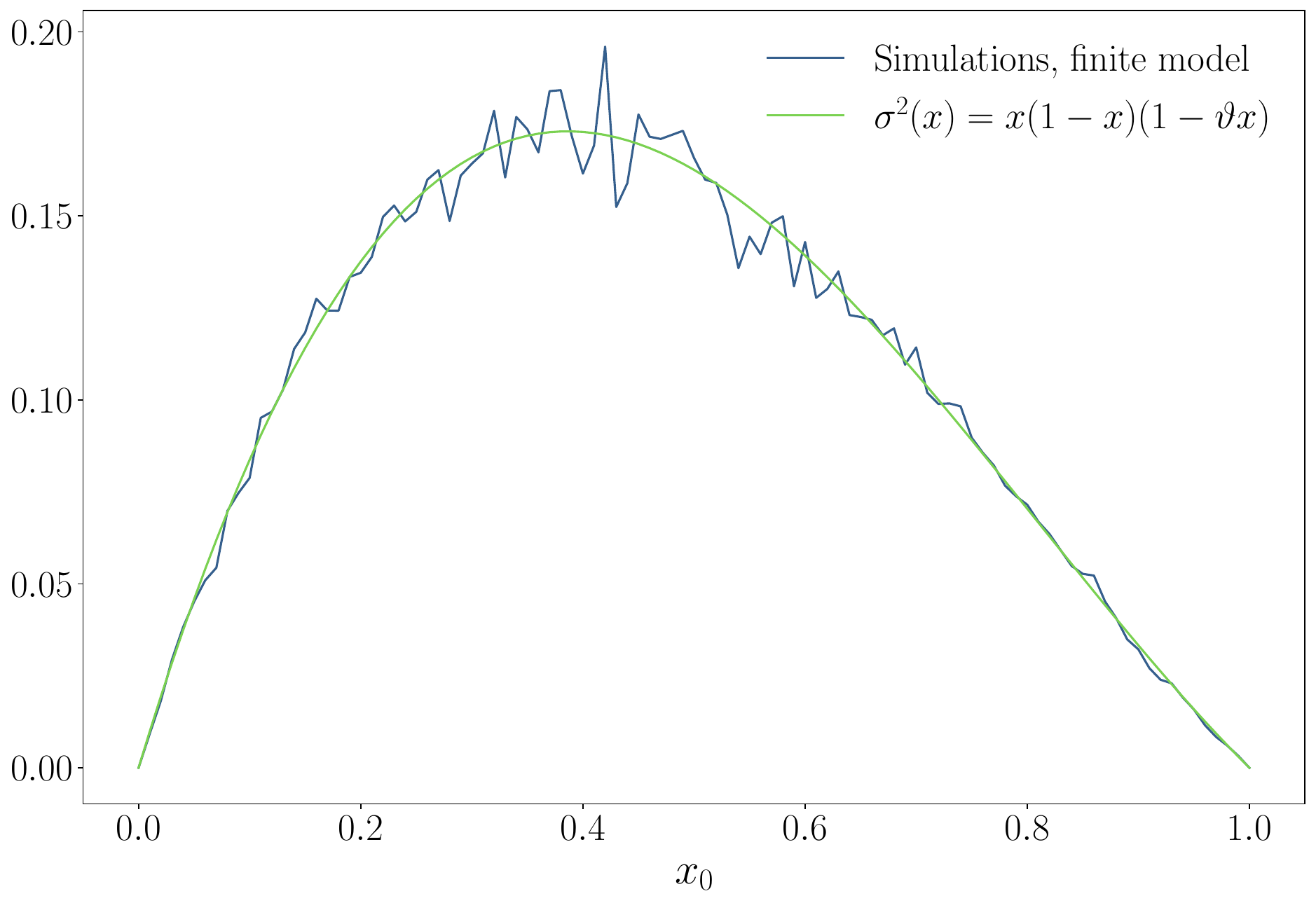}
\caption{$R \, \Eb_{x}[(\bX_{1}^{R}-x_{0})^2]$, with $n_{\text{sim}}=10^3$} \label{diffusionE2}
\end{subfigure}
\caption{Approximations (blue) of $R, \Eb_{x}[\bX_{1}^{R} - x_{0}]$ (\textit{left}) and $R, \Eb_{x}[(\bX_{1}^{R} - x_{0})^2]$ (\textit{right}) for the variant of the two-size Wright--Fisher model with $\rho_{R}(x) = x$. For each $x_{0} = i/100$, $i \in \{0, \dots, 100\}$, we estimated the expectations by simulating $R(\bX_{1}^{R} - x_{0})$ and $R(\bX_{1}^{R} - x_{0})^2$ over $n_{\text{sim}}$ runs and computing the sample means. The size of small individuals was set to $\del = 0.3$, and the resource capacity to $R = 1000$. The drift function $d(x)$ (left) and the diffusion coefficient $\sigma^2(x)$ (right) from the SDE in Theorem~\ref{thm:model variant} are shown in green.}
\label{fig.E2}
\end{figure}

\subsection{Connection to renewal theory}\label{subsec:connection}

As already indicated, a key ingredient in our analysis is the connection between the one-step transitions of the process $\big(X_{n}^{\K}, M_{n}^{\K}\big)_{n \geq 0}$ and classical renewal theory. This connection is formalized via the distributional identity in \eqref{l1} below. We fix $\del \in (0,1)$, and let $(\Omega, \mathcal{F}, \Pb)$ be a probability space that supports both the process $\big(X_{n}^{\K},M_{n}^{\K}\big)_{n \geq 0}$ and a sequence $(\xi_i)_{i \geq 1}$ of $\{\del, 1\}$-valued random variables satisfying the following conditions:

\begin{enumerate}[left=4pt]\itemsep2pt
\item The $\xi_i$ are iid under each $\Pb_{x} := \Pb(\cdot \mid X_{0}^{\K} = x)$, $x\in [0,1]$, with distribution $F_{\rho_{\K}(x)}$ and mean $\Eb_{x}[\xi_i]=\mu(\rho_{\K}(x))$, where
\begin{equation*}
F_{p}\,\coloneqq\,p\, \delta_{\del}+(1-p)\, \delta_{1}\quad\text{and}\quad\mu(p)\,\coloneqq\,\int u\,F_{p}(\dd u)\,=\,1-(1-\del)p\quad\text{for }p\in [0,1],
\end{equation*}
and $\delta_{x}$ denotes the Dirac measure at $x$.

\item The sequences $(\xi_i)_{i \geq 1}$ and $\big(X_{n}^{\K}, M_{n}^{\K}\big)_{n \geq 0}$ are independent under each $\Pb_{x}$.
\end{enumerate}

To state uniform renewal results later, we also introduce a family of auxiliary probability measures $(\Pbf_p)_{p \in [0,1]}$ on $(\Omega, \mathcal{F})$, under which the $(\xi_i)_{i \geq 1}$ are iid with law $F_p$ and mean $\mu(p)$. When $p = \rho_{\K}(x)$ for some $x \in [0,1]$, we can take $\Pbf_p = \Pb_{x}$. Expectations under $\Pb_{x}$ and $\Pbf_p$ are denoted by $\Eb_{x}$ and $\Ebf_{p}$, respectively.

Now define the zero-delayed renewal process $S = (S_{n})_{n \geq 0}$ by
\begin{equation*}
S_{0}\,:=\,0, \qquad S_{n}\,:=\,\sum_{j=1}^n \xi_j \quad \text{for } n \geq 1,
\end{equation*}
and its first passage time above level $a \geq 0$ by
\begin{equation}\label{eq.def_stop}
\tau(a)\,:=\,\inf\{n \in \Nb : S_{n} \geq a\}.
\end{equation}

We now relate this renewal process to the one-step transitions of our model. Let $S_i^{\K}$ denote the total resources consumed to produce the first $i$ individuals in generation 1. The total number of individuals in this generation is then given by
$$ M_{1}^{\K}\ =\ \inf\{n\in\Nb:{S}_{n}^{\K}\geq \K\}. $$
Since individuals are either of size $\del$ or 1, the total resources required for generation 1 satisfy
$$ {S}_{M_{1}^{\K}}^{\K}\ =\ \del M_{1}^{\K} X_{1}^{\K}\,+\,M_{1}^{\K} (1-X_{1}^{\K})\ =\ -(1-\del) M_1^{\K}X_{1}^{\K}\,+\,M_{1}^{\K} \quad\Pb_{x}\text{-a.s.} $$
It follows directly from the model that the vectors $(S_1^{\K}, \ldots, S_{M_1^{\K}}^{\K})$ and $(S_1, \ldots, S_{\tau(\K)})$ have the same law under $\Pb_{x}$. Therefore,
\begin{equation}\label{l1}
\Pb_{x}\big((X_1^{\K}, M_1^{\K}) \in \cdot\,\big)\ =\ \Pb_{x}\Bigg(\bigg(\frac{1}{1 - \del} \Big(1 - \frac{S_{\tau(\K)}}{\tau(\K)}\Big), \tau(\K)\bigg) \in \cdot\,\Bigg),
\end{equation}
i.e., the one-step dynamics of the two-size Wright--Fisher model are fully determined by the renewal process $S$ and its stopping time $\tau(\K)$. In particular, we have the identity
\begin{equation}\label{l1a}
X_1^{\K} - \rho_{\K}(x)\ =\ -\frac{1}{1 - \del} \Big( \frac{S_{\tau(\K)}}{\tau(\K)} - \mu\big(\rho_{\K}(x)\big) \Big) \quad \Pb_{x}\text{-a.s.}
\end{equation}

\subsection{Proof strategy via uniform renewal theory}\label{subsec:proof strategy via uniform}

To prove Theorem \ref{main}, we will show that for any $f \in C^4([0,1])$, the discrete generator
\begin{equation*}
\mathcal{A}^{\K} f(x)\,\coloneqq\,\K \, \Eb_{x}\big[f(X_{1}^{\K}) - f(x)\big]
\end{equation*}
converges uniformly in $x$ to the infinitesimal generator $\mathcal{A}f(x)$ of the limiting diffusion process defined by the SDE \eqref{eq.SDE}, as $\K \to \infty$. The conclusion then follows by a classical convergence result for Markov processes, see Ethier and Kurtz \cite[Theorem 1.6.1]{EK86} or Kallenberg \cite[Theorem 17.25]{Kallenberg:21}.

\vspace{.1cm}
By Itô's formula, the generator $\mathcal{A}$ acts on functions $f \in C^2([0,1])$ as
\begin{equation} \label{eq.gen_SDE}
\mathcal{A}f(x) = \big(-(1 - \del)x(1 - x) + \rho(x)\big) f'(x) + \frac{1}{2}x(1 - x)\big(1 - (1 - \del)x\big) f''(x).
\end{equation}
To relate this to the discrete generator, we perform a fourth-order Taylor expansion, yielding
\begin{align*}
\mathcal{A}^{\K} f(x)\ =&\ \K \, \Eb_{x}\big[ X_{1}^{\K} - x \big] f'(x) + \frac{1}{2} \K \, \Eb_{x}\big[ (X_{1}^{\K} - x)^2 \big] f''(x) + \frac{1}{6} \K \, \Eb_{x}\big[ (X_{1}^{\K} - x)^3 \big] f'''(x) \\
&+ \frac{1}{12} \K \, \Eb_{x}\big[ (X_{1}^{\K} - x)^4 f^{(4)}(Z_{x}) \big],
\end{align*}
for some random point $Z_{x}$ in $[0,1]$. Using the bound
\begin{equation*}
  \big\vert \Eb_{x}\big[ (X_{1}^{\K}-x)^4 f^{(4)}(Z_{x})\big]\big\vert\ \leq\  \Eb_{x}\big[ (X_{1}^{\K}-x)^4\big] \, \Vert f^{(4)}\Vert_{\infty},
\end{equation*}
where $\Vert\cdot\Vert_{\infty}$ denotes the uniform norm, it suffices to prove that
\begin{align}
\K\,\Eb_{x}[X_{1}^{\K}-x]\,&\xrightarrow[\K\to\infty]{}\, -(1-\del) x (1-x)+\rho(x),\label{eq.gen1}\\
\K\,\Eb_{x}\big[(X_{1}^{\K}-x)^2\big]\,&\xrightarrow[\K\to\infty]{}\, x(1-x)\big(1-(1-\vartheta) x\big),\label{eq.gen2}\\
\K\,\Eb_{x}\big[ (X_{1}^{\K}-x)^3\big]\,&\xrightarrow[\K\to\infty]{}\,0,\label{eq.gen3}\\
\K\,\Eb_{x}\big[ (X_{1}^{\K}-x)^4\big]\,&\xrightarrow[\K\to\infty]{}\,0,\label{eq.gen4}
\end{align}
uniformly in $x\in[0,1]$.

\vspace{.1cm}
These convergence statements will be established in Section \ref{sec:proof main}, after preparing the necessary tools from renewal theory. The connection to the latter becomes evident by observing that, via identity \eqref{l1a}, the centered moments $\Eb_{x}[(X_{1}^{\K} - \rho_{\K}(x))^n]$ for $n = 1,2,3,4$ can be written in terms of $S_{\tau(\K)}$ and $\tau(\K)$
\begin{equation}\label{eq:moments->renewal theory}
\Eb_{x}\big[\big(X_{1}^{\K} - \rho_{\K}(x)\big)^n\big]\ =\ \frac{(-1)^n}{(1 - \del)^n} \Eb_{x}\left[ \Big( \frac{S_{\tau(\K)}}{\tau(\K)} - \mu\big(\rho_{\K}(x)\big) \Big)^n \right].
\end{equation}
Two key ingredients in proving the uniform convergence of the generator are:
\begin{enumerate}[left=4pt]\itemsep2pt
\item a uniform version of an $L^p$-type elementary renewal theorem (see \eqref{eq:ERT L_p version} below), and
\item a uniform weak convergence result for the stopping summand $\xi_{\tau(\K)}$.
\end{enumerate}
In our setting, the classical (pointwise) elementary renewal theorem states that, for each $p \in [0,1]$,
\begin{equation*}
\lim_{\K\to\infty}\frac{\K}{\tau(\K)}\ =\ \mu(p) \quad\Pbf_{p}\text{-almost surely},
\end{equation*}
see e.g.\ \cite[Theorem 2.5.1 and Remark 2.5.1]{Gut:09}. Moreover, since $\del \le \xi_j \le 1$ for all $j$, we obtain the uniform bounds
\begin{equation}\label{eq:stopping bounds}
\K\,\le\,\tau(\K)\,\le\,\frac{\K+1}{\del}\quad\text{for all }\K>0
\end{equation}
which, combined with dominated convergence, yield the following $L^\beta$ version
\begin{equation}\label{eq:ERT L_p version}
\lim_{\K\to\infty}\Ebf_{p}\Bigg[\bigg(\frac{\K}{\tau(\K)}\bigg)^{\beta}\Bigg]\,=\,\mu(p)^{\beta}
\quad\text{for all }\beta>0\text{ and }p\in [0,1].
\end{equation}
Additionally, it is well-known (see \cite[Theorems 2.10.2 and 2.10.3]{Thorisson:00}) that the law $Q_{p}^{\K}$ of the stopping summand $\xi_{\tau(\K)}$ under $\Pbf_p$ converges weakly to a limiting law $Q_{p}$, which can be identified by solving a renewal equation and is given by
\begin{equation}\label{eq:form of Qp}
Q_{p}\,=\,\frac{\del \, p}{\mu(p)}\delta_{\del}\ +\ \frac{1-p}{\mu(p)}\delta_{1}.
\end{equation}
This is immediate for $p = 0$ and $p = 1$, and follows for $p \in (0,1)$ by a standard coupling argument. Since the support of the $Q_{p}^{\K}$ is a two-point set, the weak convergence may be equivalently written as
\begin{equation}\label{eq:QpR->Qp}
\lim_{\K \to \infty} \big| Q_{p}^{\K}({\del}) - Q_{p}({\del}) \big|\ =\ 0.
\end{equation}
No lattice-type considerations are needed because the support of $\xi_{\tau(\K)}$ does not vary with $\K$ -- unlike the support of the excess over the boundary $S_{\tau(\K)}-\K$ in the case when $\del\in\Qb$ and thus $(S_{n})_{n\ge 0}$ is arithmetic.

\vspace{.1cm}
In the next section, we will prove that this convergence holds uniformly in $p \in [0,1]$, for fixed $\del \in (0,1)$ (see Proposition \ref{uniform stopping summand}). This will allow us to establish a uniform extension of \eqref{eq:ERT L_p version} involving the stopping summand (Proposition \ref{prop:uniform key renewal}). These results together will imply uniform convergence of the centered moments of $S_{\tau(\K)}/\tau(\K)$ under $\Pbf_p$ for $p \in [0,1]$, which, through identity \eqref{eq:moments->renewal theory}, will yield \eqref{eq.gen1}\,--\,\eqref{eq.gen4} and thus complete the proof of Theorem \ref{main} in Section \ref{sec:proof main}.

\section{Uniform Renewal Theorems}\label{sec:URT}

\subsection{The Stopping Summand}\label{subsec:stop summand}

Let $Q_{p}^{\K}$ and $Q_{p}$ be as introduced previously, and denote by $\xi_{\infty}$ a random variable with law $Q_{p}$ under $\Pbf_{p}$, independent of all other relevant random variables. Let $\xi$ be a random variable with distribution $F_p$ under $\Pbf_{p}$ and independent of the $\xi_i$. Note that for all $p \in [0,1]$,
$$ \Ebf_{p}[\xi_{\infty}]\ =\ \frac{1-p(1-\del)(1+\del)}{\mu(p)}\ =\ \frac{\Ebf_{p}[\xi^{2}]}{\mu(p)}. $$

\begin{proposition}\label{uniform stopping summand}
Fix $\del \in (0,1)$. Let $S$ be a renewal process with stopping time $\tau(\K)$ as defined in \eqref{eq.def_stop}, and let $\xi_{\tau(\K)}$ denote the corresponding stopping summand. Then $\xi_{\tau(\K)}$ converges in~dis\-tribution as $\K \to \infty$, uniformly in $p \in [0,1]$. Specifically,
\begin{equation}\label{eq1:uniform stopping summand}
\lim_{\K \to \infty}\ \sup_{p \in [0,1]}\left|\Pbf_{p}(\xi_{\tau(\K)} = \del) - \frac{p\del}{\mu(p)}\right|\ =\ 0.
\end{equation}
As a direct consequence, for all $\beta > 0$,
\begin{equation}\label{eq2:uniform stopping summand}
\lim_{\K \to \infty}\ \sup_{p \in [0,1]} \left| \Ebf_{p}\big[\xi_{\tau(\K)}^\beta\big] - \Ebf_{p}\big[\xi_{\infty}^\beta\big] \right|\ =\ 0.
\end{equation}
\end{proposition}

Our proof of \eqref{eq1:uniform stopping summand} is purely probabilistic and based on a coupling argument. Fix $\nu \in (0, \tfrac{1}{2})$, we construct a coupling process whose distribution is the same under any $\Pbf_{p}$ with $p \in [\nu, 1 - \nu]$, implying that the coupling time has the same distribution across this range. This yields uniform convergence on $[\nu, 1 - \nu]$. Moreover, as shown in Lemma \ref{lem:aux 1} and supported by a simple intuitive argument, if $\nu$ is chosen sufficiently small, then for $p \in (0, \nu)$, the distribution $\Pbf_{p}(\xi_{\tau(\K)} \in \cdot)$ is nearly $\delta_1$, i.e., the law of $\xi_{\tau(\K)}$ under $\Pbf_{0}$. Similarly, for $p \in (1 - \nu, 1)$, it is close to $\delta_{\del}$, the law of $\xi_{\tau(\K)}$ under $\Pbf_1$. These two ingredients combine to establish the uniform convergence asserted in \eqref{eq1:uniform stopping summand}.

\vspace{.1cm}
In the arithmetic case (i.e., $\del \in \Qb$), \eqref{eq1:uniform stopping summand} can also be derived from Lemma \ref{lem:aux 1} and a result by Borovkov and Foss \cite[Theorem 2.7]{BorovkovFoss:1999}, after verifying their Fourier-analytic condition: for some continuous function $\psi$ on $[0, 2\pi]$ (assuming lattice span one),
\begin{equation} \label{eq.inequ}
\Big|\Ebf_{p}[e^{i u \xi}] -1\Big|\, \geq\, \psi(u)\quad\text{for all }u\in (0,2\pi).
\end{equation}
To the best of our knowledge, the result in the non-arithmetic case is new.

\medskip
Let us now introduce the necessary notation and auxiliary results used in the proof of Proposition \ref{uniform stopping summand}, which will follow at the end of this subsection.

\medskip
Let $\Ubf_{p}\coloneqq \sum_{n=0}^{\infty}\Pbf_{p}(S_{n} \in \cdot)$ denote the renewal measure of the process $S$. The classical version of Blackwell's renewal theorem (\cite[Theorem 2.4.2]{Gut:09}) states that
\begin{gather*}
\lim_{\K \to \infty} \Ubf_{p}\big([\K-t,\K)\big)\ =\ \frac{t}{\mu(p)}\quad\text{if $S$ is non-arithmetic}
\shortintertext{and}
\lim_{n \to \infty} \Ubf_{p}\Big(\Big\{\frac{n}{b}\Big\}\Big)\ =\ \frac{1}{b\,\mu(p)}\quad \text{if $S$ is arithmetic with lattice-span }\frac{1}{b}.
\end{gather*}
Moreover, a standard renewal argument gives
\begin{gather*}
\Pbf_{p}(\xi_{\tau(R)}=\del)\ =\ \Pbf_{p}(\xi_{1}=\del)\,\Ubf_{p}\big([R-\del,R)\big)\ =\ p\,\Ubf_{p}\big([R-\del,R)\big)
\shortintertext{and}
\Pbf_{p}(\xi_{\tau(R)}=1)\ =\ \Pbf_{p}(\xi_{1}=1)\,\Ubf_{p}\big([R-1,R)\big)\ =\ (1-p)\,\Ubf_{p}\big([R-1,R)\big)
\end{gather*}
for all $p\in [0,1]$ and $R\ge 0$, which implies the identity
\begin{equation}\label{eq:ren measure id}
 p\,\Ubf_{p}\big([R-\del,R)\big)\,+\,(1-p)\,\Ubf_{p}\big([R-1,R)\big)\,=\,1.   
\end{equation}
From this, the limiting distribution $Q_{p}$ in \eqref{eq:form of Qp} follows both in the arithmetic case ($\del \in \Qb$) and the non-arithmetic case ($\del \notin \Qb$). In the arithmetic setting, note that $Q_{p}^{\K}$, the law of $\xi_{\tau(\K)}$ under $\Pbf_{p}$, remains constant between consecutive lattice points. The form of $Q_{p}$ remains valid also at the boundary values $p = 0$ and $p = 1$, where the limiting distribution coincides trivially with the increment laws $F_{0} = \delta_1$ and $F_1 = \delta_{\del}$, respectively, as already noted.

\medskip
With the help of Lemma \ref{lem:aux 1} below, we may restrict attention to $p \in [\nu, 1 - \nu]$ for any sufficiently small $\nu > 0$, reducing the uniformity claim to showing
\begin{equation}\label{eq:reduced assertion}
\lim_{\K \to \infty}\ \sup_{p \in [\nu, 1 - \nu]}\| Q_{p}^{\K} - Q_{p}\| = 0,
\end{equation}
where $\|\cdot\|$ denotes the total variation distance (normalized). Since both $Q_{p}^{\K}$ and $Q_{p}$ are supported on the two-point set $\{\del, 1\}$, this distance simplifies to
$$ \big\|Q_{p}^{\K}-Q_{p}\big\|\ =\ \big|Q_{p}^{\K}(\{\vartheta\})-Q_{p}(\{\vartheta\})\big|\ =\ \big|Q_{p}^{\K}(\{1\})-Q_{p}(\{1\})\big|. $$

\begin{lemma}\label{lem:aux 1}
Let $Q_{p}^{\K},$ and $Q_{p}$ be as above. Then
\begin{equation*}
\lim_{p(1-p)\to 0}\,\sup_{\K\geq 0}\big\|Q_{p}^{\K}-Q_{p}\big\|\ =\ 0.
\end{equation*}
\end{lemma}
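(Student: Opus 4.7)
The plan is to exploit the fact that the measures $Q_p^R$ and $Q_p$ are both supported on the two-point set $\{\vartheta,1\}$, so that
\[
\bigl\|Q_p^R-Q_p\bigr\|=\bigl|Q_p^R(\{\vartheta\})-Q_p(\{\vartheta\})\bigr|
=\bigl|Q_p^R(\{1\})-Q_p(\{1\})\bigr|,
\]
and to show that each of the four atoms involved tends to $0$ uniformly in $R$ as $p\to 0$ or $p\to 1$. The condition $p(1-p)\to 0$ splits into these two cases, and the whole task reduces to bounding $p\,\mathbf{U}_p([R-\vartheta,R))$ (for $p\to 0$) and $(1-p)\,\mathbf{U}_p([R-1,R))$ (for $p\to 1$) uniformly in $R\ge 0$. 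The key tool is the identity \eqref{eq:ren measure id}, namely
\[
p\,\mathbf{U}_p([a-\vartheta,a))+(1-p)\,\mathbf{U}_p([a-1,a))=1\quad\text{for all }a\ge 0,
\]
which immediately gives $\mathbf{U}_p([a-\vartheta,a))\le 1/p$ and $\mathbf{U}_p([a-1,a))\le 1/(1-p)$.

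The case $p\to 0$ is the easy one. Using $[R-\vartheta,R)\subseteq[R-1,R)$ and the second bound above,
\[
Q_p^R(\{\vartheta\})=p\,\mathbf{U}_p([R-\vartheta,R))\le \frac{p}{1-p},
\]
while $Q_p(\{\vartheta\})=p\vartheta/\mu(p)\le p$, since $\mu(p)=1-(1-\vartheta)p\ge \vartheta$. Hence $\|Q_p^R-Q_p\|\le p/(1-p)+p$, which tends to $0$ as $p\to 0$, uniformly in $R$.

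The case $p\to 1$ is where the only real work lies, because the bound $\mathbf{U}_p([R-1,R))\le 1/(1-p)$ blows up and is useless. I will instead cover $[R-1,R)$ by $k:=\lceil 1/\vartheta\rceil$ consecutive half-open intervals of length $\vartheta$, namely $[R-j\vartheta,R-(j-1)\vartheta)$ for $j=1,\ldots,k$, whose union contains $[R-1,R)$. Applying the identity above to each right endpoint $a=R-(j-1)\vartheta$, each of these intervals satisfies $\mathbf{U}_p([a-\vartheta,a))\le 1/p$, so that
\[
\mathbf{U}_p([R-1,R))\le \frac{\lceil 1/\vartheta\rceil}{p},\qquad
Q_p^R(\{1\})=(1-p)\,\mathbf{U}_p([R-1,R))\le\frac{(1-p)\lceil 1/\vartheta\rceil}{p}.
\]
Together with $Q_p(\{1\})=(1-p)/\mu(p)\le (1-p)/\vartheta$, this yields a bound on $\|Q_p^R-Q_p\|$ which vanishes as $p\to 1$, uniformly in $R$. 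Combining both regimes establishes the lemma.

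The main obstacle is precisely this last step: the asymmetry between the two atoms $\vartheta$ and $1$ forces one to estimate the renewal mass of a unit-length interval when almost all increments are of size $\vartheta$, and the crude bound from \eqref{eq:ren measure id} does not suffice there. The covering argument, which converts a unit interval into $\lceil 1/\vartheta\rceil$ intervals of length $\vartheta$ whose renewal mass is controlled by $1/p$, is what supplies the right scaling in $1-p$.
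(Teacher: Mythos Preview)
Your proof is correct and rests on the same two ingredients as the paper's: the identity \eqref{eq:ren measure id} and uniform-in-$R$ bounds on the renewal mass of short intervals. The execution differs slightly. You treat the two regimes asymmetrically, regarding $p\to 1$ as the harder case and handling it by covering $[R-1,R)$ with $\lceil 1/\vartheta\rceil$ intervals of length $\vartheta$, each of renewal mass at most $1/p$ via the identity. The paper instead uses the sharper and more direct observation that $\mathbf U_p([R-\vartheta,R))\le 1$ for \emph{all} $p\in[0,1]$ and $R\ge 0$, simply because every increment is at least $\vartheta$, so at most one renewal can fall in any interval of length $\vartheta$. With this in hand, the paper solves \eqref{eq:ren measure id} for each of the two renewal masses in terms of the other and obtains the symmetric statements $\mathbf U_p([R-1,R))\to 1$ as $p\downarrow 0$ and $\mathbf U_p([R-\vartheta,R))\to 1$ as $p\uparrow 1$, uniformly in $R$; both cases are then equally short. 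Your covering argument is a perfectly valid substitute (and in fact supplies exactly the bound on $\mathbf U_p([R-1,R))$ that the paper's second limit tacitly uses), but noting that increments are bounded below by $\vartheta$ gives the cleaner constant $1$ in place of $1/p$ and removes the perceived asymmetry between the two regimes.
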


\begin{proof}
By \eqref{eq:ren measure id} and the fact that $\sup_{p\in [0,1]}\sup_{\K\geq 0}\Ubf_{p}([\K-\vartheta,\K))\leq1$, we have
\begin{align*}
\lim_{p\downarrow 0}\Ubf_{p}\big([\K-1,\K)\big)\ =\ \lim_{p\downarrow 0}\frac{1-p\,\Ubf_{p}\big([\K-\vartheta,\K)\big)}{1-p}\ =\ 1
\shortintertext{and}
\lim_{p\uparrow 1}\Ubf_{p}\big([\K-\vartheta,\K)\big)\ =\ \lim_{p\uparrow 1}\frac{1-(1-p)\,\Ubf_{p}\big([\K-1,\K)\big)}{p}\ =\ 1,
\end{align*}
both uniformly in $\K\geq 0$. Using this and the explicit form of $Q_{p}^{\K}$ from the renewal representation, we obtain
\begin{gather*}
\sup_{\K}\big\|Q_{p}^{\K}-Q_{p}\big\|\ =\ \sup_{\K}\bigg|\Pbf_{p}(\xi_{\tau(\K)}=\vartheta)-\frac{p\vartheta}{\mu(p)}\bigg|\ =\ \sup_{\K}\bigg|p\,\Ubf_{p}\big([\K-\vartheta,\K)\big)-\frac{p\vartheta}{\mu(p)}\bigg|\ \xrightarrow{p\uparrow 1}\ 0
\shortintertext{and similarly}
\sup_{\K}\big\|Q_{p}^{\K}-Q_{p}\big\|\ =\ \sup_{\K}\bigg|(1-p)\,\Ubf_{p}\big([\K-1,\K)\big)-\frac{1-p}{\mu(p)}\bigg|\ \xrightarrow{p\downarrow 0}\ 0.
\end{gather*}
This completes the proof.
\end{proof}

In the following, we restrict attention to the case $\vartheta\notin\Qb$ so that the renewal process $S$ is non-arithmetic under each $\Pbf_{p}$, $p\in (0,1)$. The arguments in the arithmetic case are very similar and, in fact, simpler, since one can use exact coupling instead of an approximate $\eps$-coupling.
In the boundary cases $p=0$ and $p=1$, the process $S$ becomes deterministic and is thus not non-arithmetic. In these cases, the limiting distribution is trivial, with $Q_{0}=F_{0}=\delta_{1}$ and $Q_{1}=F_{1}=\delta_{\vartheta}$, respectively. 

\vspace{.1cm}
Let $F_{p}^{*}$ denote the \emph{stationary renewal distribution} of $S$ under $\Pbf_{p}$, given by
\begin{equation*}\label{eq:stationary delay}
F_{p}^{*}(\dd x)\ =\ \mu(p)^{-1}\Pbf_{p}(\xi_{1}>x)\1_{(0,\infty)}(x)\,\dd x.
\end{equation*}
In the non-arithmetic case, $F_{p}^{*}$ is characterized by
\begin{equation*}
F_{p}^{*}*\Ubf_{p}=\mu(p)^{-1}\llam^{+}
\end{equation*}
where $\llam^{+}$ denotes Lebesgue measure on the positive halfline. It is also the limiting distribution of the overshoot $S_{\tau(\K)}-\K$ as $\K\to\infty$ and hence the stationary law of the continuous-time Markov process $(S_{\tau(\K)}-\K)_{\K\geq 0}$ under $\Pbf_{p}$.

\vspace{.2cm}
We now fix, as indicated above, an arbitrarily small $\nu\in (0,\frac{1}{2})$ and restrict attention to $p\in [\nu,1-\nu]$. Let $(\xi_{1}',\xi_{1}''),(\xi_{2}',\xi_{2}''),\ldots$ be iid~under every $\Pbf_{p}$ with common joint law defined by
\begin{gather*}
\Pbf_{p}(\xi_{1}'=\vartheta,\xi_{1}''=0)\,=\,\Pbf_{p}(\xi_{1}'=0,\xi_{1}''=\vartheta)=\frac{\nu}{4},\\
\Pbf_{p}(\xi_{1}'=1,\xi_{1}''=0)\,=\,\Pbf_{p}(\xi_{1}'=0,\xi_{1}''=1)\,=\,\frac{\nu}{4},\\
\Pbf_{p}(\xi_{1}'=\xi_{1}''=\vartheta)\,=\,\frac{p}{2}-\frac{\nu}{4},\quad \Pbf_{p}(\xi_{1}'=\xi_{1}''=1)\,=\,\frac{1-p}{2}-\frac{\nu}{4}
\shortintertext{and}
\Pbf_{p}(\xi_{1}'=\xi_{1}''=0)\,=\,\frac{1-\nu}{2}.
\end{gather*}
It follows that $\xi_{n}'$ and $\xi_{n}''$ have the same law under $\Pbf_{p}$, namely
$$ \frac{1}{2}\delta_{0}+\frac{p}{2}\delta_{\vartheta}+\frac{1-p}{2}\delta_{1}\ =\ \frac{1}{2}\big(\delta_{0}+F_{p}\big). $$
Moreover, the law of the difference $\xi_{n}'-\xi_{n}''$ is symmetric and independent of $p\in [\nu,1-\nu]$, namely
\begin{equation}\label{eq:symmetrization law}
\Pbf_{p}(\xi_{n}'-\xi_{n}''\in\cdot)\ =\ (1-\nu)\delta_{0}\,+\,\frac{\nu}{4}\big(\delta_{\vartheta}+\delta_{-\vartheta}+\delta_{1}+\delta_{-1}\big)
\end{equation}
for each $p\in [\nu,1-\nu]$. Note that this law is non-arithmetic under our assumption that $\vartheta\notin\Qb$.

\vspace{.2cm}
Now let $(\xi_{0}',\xi_{0}'')$ be independent of $(\xi_{n}',\xi_{n}'')_{n\geq 1}$ and distributed according to $\delta_{0}\otimes F_{p}^{*}$ under $\Pbf_{p}$. Define the bivariate random walk
\begin{gather*}
(S_{n}',S_{n}'')\,\coloneqq\,\sum_{k=0}^{n}(\xi_{k}',\xi_{k}''),\quad n\geq 0
\shortintertext{and its symmetrization}
W_{n}\,\coloneqq\,S_{n}''-S_{n}'\,=\,\sum_{k=0}^{n}(\xi_{k}''-\xi_{k}'),\quad n\geq 0.
\end{gather*}
By \eqref{eq:symmetrization law}, the law of the sequence $(W_{n}-W_{0})_{n \geq 0}$ under $\Pbf_{p}$ is the same for every $p \in [\nu, 1-\nu]$.

\vspace{.1cm}
Define the $\eps$-coupling time
$$ T_{\eps}\,=\,\inf\{n\geq 0:|W_{n}|\le\eps\}, $$
and more generally,
$$T_{\eps, x}\ =\ \inf \{ n \geq 0: \vert W_{n} -W_{0}+x \vert \leq \eps \}$$
for $\eps >0$ and $x >0$. Using $\Pbf_{\bullet}$ for probabilities that are independent of $p$, we then have
$$ \Pbf_{p}(T_\eps \in \cdot)\ =\ F_{p} (\{ \del\})\, \Pbf_{\bullet} (T_{\eps, \del}\in \cdot) + \big(1-F_{p}(\{\del\})\big)\, \Pbf_{\bullet}(T_{\eps, 1}\in \cdot). $$
This shows that the law of $T_\eps$ under $\Pbf_{p}$ depends on $p$ only through $F_{p}(\{ \del\})$, and is bounded by the larger of the two laws on the right-hand side, in the sense that
\begin{equation*}
\Pbf_{p}(T_\eps \in \cdot)\ \leq\ \Pbf_{\bullet}(T_{\eps, \del}\in \cdot) \vee \Pbf_{\bullet}(T_{\eps,1}\in \cdot).
\end{equation*}
We also observe that, if $\sigma_{0}'=0$ and
$$ \sigma_{n}'\,=\,\inf\{k>\sigma_{n-1}':\xi_{k}'>0\}\,=\,\inf\{k>\sigma_{n-1}':S_{k}'>S_{\sigma_{n-1}'}'\}$$
for $n\geq 1$ denote the jump epochs (strictly ascending ladder epochs) of $S'$, then the process $(S_{\sigma_{n}'}')_{n\geq 0}$ has the same law as the original process $S$ under every $\Pbf_{p}$. Furthermore, the increments of $(\sigma_{n}')_{n\geq 0}$ are iid~and geometrically distributed on $\Nb$ with parameter $\frac{1}{2}$. Now define
$$ \tau'(\K)\,=\,\inf\{n\geq 1:S_{n}'\geq\K\},\quad \K\geq 0. $$
Since level exceedance by $S'$ can only occur at a jump time, it follows that $\tau'(\K)=\sigma_{g(\K)}'$ for some suitable index function $g(\K)$.

\vspace{.2cm}
For $n\in\Nb_{0}$ and measurable $A\subset [0,\infty)$,  define the counting processes
\begin{align*}
N_{n}(A)\,\coloneqq\,\sum_{k=0}^{n}\1^{}_{A}(S_{k})\quad\text{and}\quad N(A)\,\coloneqq\,\sum_{k\geq 0}\1^{}_{A}(S_{k})
\end{align*}
and define $N_{n}'(A),N_{n}''(A),N'(A),N''(A)$ accordingly for $S'$ and $S''$, respectively. Then, by definition of the renewal measure, we have $\Ubf_{p}(A)=\Ebf_{p}[N(A)]$. The next lemma shows that augmenting the increment law by an atom at zero (i.e., replacing $F_{p}$ with $\frac{1}{2}(\delta_{0}+F_{p})$) changes the renewal measure only by a constant. We continue with some auxiliary lemmata used in the uniform coupling argument for the proof of Proposition \ref{uniform stopping summand}.

\begin{lemma}\label{lem:aux2}
Let $\Ubf_{p}'$ and $\Ubf_{p}''$ denote the renewal measures of $S'$ and $S''$ under $\Pbf_{p}$, respectively. Then
$$ \Ubf_{p}'\,=\,2\Ubf_{p}\quad\text{and}\quad\Ubf_{p}''\,=\,F_{p}^{*}*\Ubf_{p}'\,=\,2\mu(p)^{-1}\llam^{+} $$
for each $p\in (0,1)$.
\end{lemma}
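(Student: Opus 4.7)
The plan is to prove the two identities in turn, beginning with $\Ubf_p' = 2\Ubf_p$. The key observation is that $S'$ stays constant between the jump epochs $(\sigma_n')_{n\ge 0}$, so each ``renewal point'' $S_{\sigma_n'}'$ of the underlying walk is visited for exactly $\sigma_{n+1}' - \sigma_n'$ steps. Writing
\begin{equation*}
N'(A)\ =\ \sum_{k\ge 0}\1_A(S_k')\ =\ \sum_{n\ge 0}(\sigma_{n+1}' - \sigma_n')\,\1_A(S_{\sigma_n'}'),
\end{equation*}
I would take expectations and use that, by construction, the holding time $\sigma_{n+1}' - \sigma_n'$ is independent of $\sigma_n'$ and of $(\xi_{\sigma_k'}')_{k\le n}$ (because whether $\xi_j'=0$ is independent of the value of $\xi_j'$ given $\xi_j'>0$). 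Since each gap is geometric on $\Nb$ with parameter $1/2$, hence mean $2$, and $(S_{\sigma_n'}')_{n\ge 0}$ has the same law as $S$ under $\Pbf_p$, this yields
\begin{equation*}
\Ubf_p'(A)\ =\ \Ebf_p[N'(A)]\ =\ 2\sum_{n\ge 0}\Pbf_p(S_{\sigma_n'}'\in A)\ =\ 2\,\Ubf_p(A).
\end{equation*}

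For the second identity, I would use that by the setup $\xi_0''$ is independent of $(\xi_k'')_{k\ge 1}$, so $S_n'' = \xi_0'' + (S_n'' - \xi_0'')$ is a sum of two independent pieces. The marginals of $\xi_k''$ for $k\ge 1$ coincide with those of $\xi_k'$, hence $S_n'' - \xi_0''$ has the same law as $S_n'$, and $\xi_0''\sim F_p^*$. A straightforward convolution identity then gives $\Pbf_p(S_n''\in\cdot)=F_p^**\Pbf_p(S_n'\in\cdot)$, and summing over $n$ yields $\Ubf_p'' = F_p^* * \Ubf_p'$. Combining with the first part gives $\Ubf_p'' = 2\,F_p^* * \Ubf_p = 2\mu(p)^{-1}\llam^+$, where the last equality is the characterization of $F_p^*$ already recalled in the text.

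The main obstacle is being careful about the independence structure in the first identity: one must verify that $\sigma_{n+1}' - \sigma_n'$ can indeed be pulled out of the expectation against $\1_A(S_{\sigma_n'}')$. This is immediate from the fact that the iid random vectors $(\1_{\{\xi_j'>0\}},\xi_{\sigma_n'}')$ have independent coordinates (the increment law is a mixture $\frac{1}{2}\delta_0 + \frac{1}{2}F_p$, so conditional on $\xi_j'>0$, the value is distributed as $F_p$, independently of the indicator), so the excursion lengths and the jump values decouple in the required way. All remaining steps (applying the renewal-measure convolution identity and the formula $F_p^**\Ubf_p=\mu(p)^{-1}\llam^+$) are mechanical.
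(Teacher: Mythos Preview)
Your argument is correct, but the paper proves the first identity differently: it computes Laplace transforms. If $\varphi_{p}$ denotes the Laplace transform of $F_{p}$, then the increment law $\tfrac{1}{2}(\delta_{0}+F_{p})$ has transform $\tfrac{1}{2}(1+\varphi_{p})$, whence $\Ubf_{p}'$ has transform $\big(1-\tfrac{1}{2}(1+\varphi_{p})\big)^{-1}=2(1-\varphi_{p})^{-1}$, which is the transform of $2\Ubf_{p}$. This is a two-line computation that sidesteps any independence bookkeeping. Your route instead exploits the ladder-epoch structure already introduced in the surrounding text, decomposing $N'(A)$ over holding intervals and using that the geometric gap lengths decouple from the jump values; this is more probabilistic and would carry over unchanged to situations where transforms are less handy. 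One wording issue: the sentence about ``the iid random vectors $(\1_{\{\xi_j'>0\}},\xi_{\sigma_n'}')$ have independent coordinates'' is garbled (mismatched indices, and the indicator and $\xi_j'$ are obviously not independent). What you need, and what your next sentence essentially says, is the factorization $\xi_{j}'=B_{j}Y_{j}$ with $B_{j}\sim\mathrm{Ber}(1/2)$ and $Y_{j}\sim F_{p}$ independent; then $(\sigma_{n}')_{n\ge 0}$ is measurable with respect to $(B_{j})$ alone and the jump values $(\xi_{\sigma_{n}'}')_{n\ge 1}=(Y_{\sigma_{n}'})_{n\ge 1}$ are iid $F_{p}$ independent of the entire sequence $(\sigma_{n}')$. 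For the second identity your argument coincides with the paper's.
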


\begin{proof}
Since $S''$ has the same increment law as $S'$ and delay distribution $F_{p}^{*}$, only the first identity needs to be verified. Let $\varphi_{p}$ denote the Laplace transform of $F_{p}$. Then the Laplace transform of $(\delta_{0}+F_{p})/2$ equals $(1+\varphi_{p})/2$. It follows that $\Ubf_{p}'=\sum_{n\geq 0}2^{-n}(\delta_{0}+F_{p})^{*n}$ has Laplace transform
$$ \frac{1}{1-(1+\varphi_{p})/2}\ =\ \frac{2}{1-\varphi_{p}} $$
which is also the Laplace transform of $2\Ubf_{p}$.
\end{proof}

\begin{lemma}\label{lem:aux3}
For all $\K\geq 0$, $n\in\Nb_{0}$ and $p\in [\nu,1-\nu]$,
\begin{gather}
N_{n}'\big([\K,\K+\vartheta)\big)\ \leq\ (\sigma_{g(\K)+1}'-\sigma_{g(\K)}')\1_{\{\tau'(\K)\leq n\}}\quad\Pbf_{p}\text{-a.s.}\label{eq:aux3.1}
\shortintertext{and}
\Ebf_{p}\big[N_{n}'\big([\K,\K+\vartheta)\big)\big]\ \le\ 2\,\Pbf_{p}(\tau'(\K)\leq n).\label{eq:aux3.2}
\end{gather}
\end{lemma}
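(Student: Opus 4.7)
The plan is to establish \eqref{eq:aux3.1} by a pathwise case analysis with respect to the location of $\tau'(\K)$, and then to derive \eqref{eq:aux3.2} by taking expectations and invoking the strong Markov property at the stopping time $\tau'(\K)$.

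First, I would note that, by the very definition of the ladder epochs $(\sigma_n')$, the walk $S'$ is piecewise constant: on each index block $\{\sigma_m',\ldots,\sigma_{m+1}'-1\}$ it takes the common value $S_{\sigma_m'}'$. On the event $\{\tau'(\K)>n\}$, every $S_k'$ with $k\leq n$ satisfies $S_k'<\K$, so $N_n'([\K,\K+\vartheta))=0$ and \eqref{eq:aux3.1} holds trivially. On $\{\tau'(\K)\leq n\}$, we have $\tau'(\K)=\sigma_{\nu(\K)}'$, and $S'$ is constant and $\geq\K$ throughout the index block $\{\sigma_{\nu(\K)}',\ldots,\sigma_{\nu(\K)+1}'-1\}$. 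No earlier index can contribute to $N_n'([\K,\K+\vartheta))$ (then $S_k'<\K$), and no index $k\geq\sigma_{\nu(\K)+1}'$ can contribute either, because every positive jump has size at least $\vartheta$, so $S_k'\geq S_{\sigma_{\nu(\K)}'}'+\vartheta\geq\K+\vartheta$. Hence at most $\sigma_{\nu(\K)+1}'-\sigma_{\nu(\K)}'$ indices can lie in $[\K,\K+\vartheta)$, which gives \eqref{eq:aux3.1}.

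For \eqref{eq:aux3.2}, I would take expectations in \eqref{eq:aux3.1} and apply the strong Markov property of $(S_k')_{k\geq 0}$ at the stopping time $\tau'(\K)$. The post-$\tau'(\K)$ waiting time
\begin{equation*}
\sigma_{\nu(\K)+1}'-\sigma_{\nu(\K)}'\,=\,\inf\{k\geq 1:\xi_{\tau'(\K)+k}'>0\}
\end{equation*}
is, on $\{\tau'(\K)<\infty\}$, independent of $\Fs_{\tau'(\K)}$ and distributed like $\sigma_1'$. Since the marginal law of any increment $\xi_k'$ under $\Pbf_p$ is $\tfrac{1}{2}(\delta_0+F_p)$, we have $\Pbf_p(\xi_1'>0)=1/2$ for every $p\in[\nu,1-\nu]$, so the waiting time is geometric on $\Nb$ with parameter $\tfrac{1}{2}$ and mean $2$. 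Combining independence with the fact that $\{\tau'(\K)\leq n\}\in\Fs_{\tau'(\K)}$ then yields $\Ebf_p[N_n'([\K,\K+\vartheta))]\leq 2\,\Pbf_p(\tau'(\K)\leq n)$, which is \eqref{eq:aux3.2}.

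The only points requiring a little care are the use of the minimum positive jump-size $\vartheta$ to rule out contributions from indices $k\geq\sigma_{\nu(\K)+1}'$ (this is what forces the interval length to be $\vartheta$ rather than something larger) and the appeal to the strong Markov property at $\tau'(\K)$ to decouple the next waiting time from the event $\{\tau'(\K)\leq n\}$; both are standard, and the remainder of the argument is pathwise bookkeeping.
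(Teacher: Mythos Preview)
Your proof is correct and follows essentially the same approach as the paper's: a pathwise argument that $S'$ can only visit $[\K,\K+\vartheta)$ during the single plateau starting at $\tau'(\K)=\sigma_{\nu(\K)}'$ (since the next positive jump has size at least $\vartheta$), followed by independence of the geometric waiting time $\sigma_{\nu(\K)+1}'-\sigma_{\nu(\K)}'$ from $\tau'(\K)$ to pass to expectations. The paper states this more tersely, invoking independence of $(\sigma_{\nu(\K)+k}'-\sigma_{\nu(\K)}')_{k\geq 0}$ and $\tau'(\K)$ directly rather than framing it via the strong Markov property, but the content is the same.
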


\begin{proof}
By definition of $S'$, the walk can only visit the interval $[\K,\K+\vartheta)$ within $n$ steps if $\tau'(\K)=\sigma_{g(\K)}'\leq n$. In that case, $S'$ will exit the interval at the next positive jump, which is of size at least $\vartheta$. Thus, \eqref{eq:aux3.1} follows immediately. Since $\big(\sigma_{g(\K)+k}'-\sigma_{g(\K)}'\big)_{k\geq 0}$ and $\tau'(\K)$ are independent and $\Ebf_{p}\big[\sigma_{g(\K)+1}'-\sigma_{g(\K)}'\big]=2$, we obtain \eqref{eq:aux3.2} by taking expectations in \eqref{eq:aux3.1}.
\end{proof}

\begin{remark}\label{rem1:aux3}\rm
Since $S'$ differs from the original walk $S$ only by the inclusion of additional zero jumps, it is immediate that
\begin{equation}\label{eq:tau(a)>tau'(a)}
\Pbf_{p}\big(\tau'(\K) \leq n\big) \,\leq\, \Pbf_{p}\big(\tau(\K) \leq n\big)
\end{equation}
for all $\K \geq 0$, $n \in \Nb_{0}$, and $p \in [\nu, 1 - \nu]$. Moreover, we have $\tau(\K) \geq \K$ because the maximal jump size of $S$ is $1$ (see~\eqref{eq:stopping bounds}).
\end{remark}

\begin{remark}\label{rem2:aux3}\rm
Since
\begin{equation*}
\Ebf_{p}\big[N_{n}''\big([\K, \K + \vartheta)\big)\big] 
\ =\ \int_{(0,1]} \Ebf_{p}\big[N_{n}'\big([\K - x, \K + \vartheta - x)\big)\big] \, F_{p}^{*}(\dd x),
\end{equation*}
the previous lemma, combined with \eqref{eq:tau(a)>tau'(a)}, implies that
\begin{align*}
\Ebf_{p}\big[N_{n}''\big([\K, \K + \vartheta)\big)\big]\  
&\leq\ \int_{(0,1]} 2\, \Pbf_{p}\big(\tau'(\K - x) \leq n\big)\, F_{p}^{*}(\dd x)\ \leq\ 2\, \Pbf_{p}\big(\tau(\K - 1)\leq n\big)
\end{align*}
for all $\K \geq 0$, $n \in\Nb_{0}$, and $p \in [\nu,1-\nu]$.
\end{remark}

To formulate the next lemma, let $(\Fs_{n})_{n \geq 0}$ denote the canonical filtration of the bivariate random walk $(S_{n}', S_{n}'')_{n \geq 0}$. Note that the $\eps$-coupling time is a stopping time with respect to this filtration.

\begin{lemma}\label{lem:aux4}
For all $\eps > 0$, $\K \geq 0$, and $p \in [\nu, 1 - \nu]$,
\begin{equation}\label{eq:aux4.1}
\Ebf_{p}\big[N_{T_{\eps}}'\big([\K, \K + \vartheta)\big)\big]\ \leq\ 2\,\Pbf_{\bullet}(T_{\eps} \geq \K),
\end{equation}
where $\Pbf_{\bullet}$ indicates that this probability is independent of $p$. As a consequence,
\begin{equation}\label{eq:aux4.2}
\lim_{\K \to \infty} \sup_{p \in [\nu, 1 - \nu]} \Ebf_{p}\big[N_{T_{\eps}}'\big([\K, \K + \vartheta)\big)\big]\ =\ 0,
\end{equation}
and the same uniform convergence holds for $\Ebf_{p}\big[N_{T_{\eps}}''([\K, \K + \vartheta))\big]$.
\end{lemma}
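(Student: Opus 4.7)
The plan is to combine the pathwise bound of Lemma \ref{lem:aux3} with a strong-Markov decoupling at $\tau'(\K)$. Since \eqref{eq:aux3.1} holds pathwise for every deterministic $n$, substituting $n = T_\eps(\omega)$ gives
\[
N_{T_\eps}'([\K, \K+\vartheta)) \;\le\; G\,\1_{\{\tau'(\K) \le T_\eps\}}, \qquad G := \sigma'_{\nu(\K)+1} - \sigma'_{\nu(\K)}.
\]
Both $T_\eps$ and $\tau'(\K)$ are $(\Fs_n)$-stopping times, so $\{T_\eps \ge \tau'(\K)\}$ lies in $\Fs_{\tau'(\K)}$. By the strong Markov property of the bivariate walk $(S_n', S_n'')$ at $\tau'(\K)$, the post-$\tau'(\K)$ increments $(\xi_{\tau'(\K)+k}', \xi_{\tau'(\K)+k}'')_{k \ge 1}$ are iid with the joint law of $(\xi_1', \xi_1'')$ and independent of $\Fs_{\tau'(\K)}$; in particular $G = \inf\{k \ge 1 : \xi_{\tau'(\K)+k}' > 0\}$ is geometric with parameter $1/2$ and independent of $\Fs_{\tau'(\K)}$, so $\Ebf_p[G \mid \Fs_{\tau'(\K)}] = 2$.

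Taking expectations and using $\tau'(\K) \ge \K$ (Remark \ref{rem1:aux3}, valid since the increments of $S'$ are in $[0,1]$) yields
\[
\Ebf_p\bigl[N_{T_\eps}'([\K, \K+\vartheta))\bigr] \;=\; 2\,\Pbf_p(T_\eps \ge \tau'(\K)) \;\le\; 2\,\Pbf_p(T_\eps \ge \K).
\]
The mixture bound $\Pbf_p(T_\eps \in \cdot) \le \Pbf_\bullet(T_{\eps, \del} \in \cdot) \vee \Pbf_\bullet(T_{\eps, 1} \in \cdot)$ recorded before the lemma then provides $\Pbf_p(T_\eps \ge \K) \le \Pbf_\bullet(T_\eps \ge \K)$, giving \eqref{eq:aux4.1}. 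For \eqref{eq:aux4.2}, invoking \eqref{eq:symmetrization law} and $\vartheta \notin \Qb$, the symmetrization $(W_n - W_0)_n$ is a zero-mean random walk with non-arithmetic, $p$-free increments, so $T_\eps$ is $\Pbf_\bullet$-almost surely finite and $\Pbf_\bullet(T_\eps \ge \K) \to 0$ as $\K \to \infty$, uniformly in $p$. The analogous statement for $N_{T_\eps}''([\K, \K+\vartheta))$ follows by the same scheme with $S''$ replacing $S'$; the only change is the weaker estimate $\tau''(\K) \ge \K - 1$ (since $S_n'' \le \xi_0'' + n \le 1 + n$, cf.~Remark \ref{rem2:aux3}), yielding the bound $2\,\Pbf_\bullet(T_\eps \ge \K - 1)$, which still vanishes uniformly.

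The principal subtlety is confirming that $G$ can be decoupled from $\1_{\{T_\eps \ge \tau'(\K)\}}$ even though $T_\eps$ might exceed $\tau'(\K)$ and thus \emph{a priori} depend on the same post-$\tau'(\K)$ randomness that determines $G$. This is resolved by the observation that $\{T_\eps \ge \tau'(\K)\}$ only records whether $\eps$-coupling has already occurred \emph{before} time $\tau'(\K)$, so it is $\Fs_{\tau'(\K)}$-measurable and can be separated from $G$, which lives entirely in the post-$\tau'(\K)$ future.
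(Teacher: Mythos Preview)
Your proof follows the paper's argument essentially line for line: plug the random index $T_\eps$ into the pathwise bound \eqref{eq:aux3.1}, use that $\{\tau'(\K)\le T_\eps\}\in\Fs_{\tau'(\K)}$ while $G=\sigma'_{\nu(\K)+1}-\sigma'_{\nu(\K)}$ is independent of that $\sigma$-field with mean $2$, and finish via $\tau'(\K)\ge\K$ and the $p$-free domination of the law of $T_\eps$. The only slip is the displayed equality $\Ebf_p[N_{T_\eps}'([\K,\K+\vartheta))]=2\,\Pbf_p(T_\eps\ge\tau'(\K))$; since \eqref{eq:aux3.1} is an inequality, this should read $\le$ (equality holds for $\Ebf_p[G\,\1_{\{\tau'(\K)\le T_\eps\}}]$, not for the left-hand side).
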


\begin{proof}
From \eqref{eq:aux3.1}, we know that
$$ N_{T_\eps}'\big([\K, \K + \vartheta)\big)\ \leq\ (\sigma_{g(\K)+1}' - \sigma_{g(\K)}')\, \1_{\{\tau'(\K) \leq T_{\eps}\}}\quad \text{$\Pbf_{p}$-a.s. for all $p \in [\nu, 1 - \nu]$}. $$
Since $\{\tau'(\K) \leq T_{\eps}\} \in \Fs_{\tau'(\K)}$ and the increment $\sigma_{g(\K)+1}' - \sigma_{g(\K)}'$ is independent of $\Fs_{\tau'(\K)}$, it follows that
$$ \Ebf_{p}\big[N_{T_{\eps}}'\big([\K, \K + \vartheta)\big)\big] 
\,\leq\, 2\, \Pbf_{\bullet}\big(T_{\eps} \geq \tau'(\K)\big). $$
Using the fact that $\tau'(\K)\geq\K$ and that the law of $T_{\eps}$ under $\Pbf_{p}$ does not depend on $p$, we obtain \eqref{eq:aux4.1}. The remaining statements follow immediately.
\end{proof}

We now have all the ingredients to prove Proposition~\ref{uniform stopping summand}.

\begin{proof}[Proof of Proposition~\ref{uniform stopping summand}]
Fix any irrational $\vartheta$, and recall that
$$
Q_{p}^{\K}(\{\vartheta\})\ =\ \Pbf_{p}(\xi_{\tau(\K)}\ =\ \vartheta)\ =\ p\,\Ubf_{p}\big([\K - \vartheta, \K)\big)\ =\ 1 - \Pbf_{p}(\xi_{\tau(\K)}\ =\ 1).
$$
Further recalling \eqref{eq:form of Qp}, we obtain
$$
\big|Q_{p}^{\K}(\{\vartheta\}) - Q_{p}(\{\vartheta\})\big|\ =\ p \left| \Ubf_{p}\big([\K - \vartheta, \K)\big) - \frac{\vartheta}{\mu(p)} \right|.
$$
Hence, to show
$$
\sup_{p \in [0,1]} \left| Q_{p}^{\K}(\{\vartheta\}) - Q_{p}(\{\vartheta\}) \right| \xrightarrow{\K \to \infty} 0,
$$
it suffices to establish that $\Ubf_{p}([\K - \vartheta, \K)) \to \vartheta / \mu(p)$ uniformly in $p$. By Lemma~\ref{lem:aux 1}, we may restrict to $p \in [\nu, 1 - \nu]$ for arbitrary $\nu \in (0, \tfrac{1}{2})$. Thus, it remains to prove
\begin{equation}\label{eq:to be shown}
\lim_{\K \to \infty} \sup_{p \in [\nu, 1 - \nu]} \left| \Ubf_{p}\big([\K - \vartheta, \K)\big) - \frac{\vartheta}{\mu(p)} \right|\ =\ 0.
\end{equation}

To this end, fix any $\eps \in (0, \tfrac{\vartheta}{2})$. Observe that
$$ \Ubf_{p}\big([\K - \vartheta, \K)\big) - \frac{\vartheta}{\mu(p)}\ =\ \frac{1}{2} \Big( \Ubf_{p}'\big([\K - \vartheta, \K)\big) - \Ubf_{p}''\big([\K - \vartheta, \K)\big) \Big), $$
and recall that $\Ubf_{p}''\ =\ \frac{2}{\mu(p)} \llam^{+}$. We estimate
\begin{align*}
\Ubf_{p}'\big([\K - \vartheta, \K)\big)\  
&=\ \Ebf_{p} \big[N_{T_{\eps}}'\big([\K - \vartheta, \K)\big)\big] 
  +\ \Ebf_{p} \Bigg[ \sum_{n > T_{\eps}} \1_{[\K - \vartheta, \K)}(S_{n}') \Bigg] \\
&\leq\ \Ebf_{p} \big[N_{T_{\eps}}'\big([\K - \vartheta, \K)\big)\big] 
  +\ \Ebf_{p} \Bigg[ \sum_{n > T_{\eps}} \1_{[\K - \vartheta - \eps, \K + \eps)}(S_{n}'') \Bigg] \\
&=\ \Ebf_{p} \big[N_{T_{\eps}}'\big([\K - \vartheta, \K)\big)\big] 
   -\ \Ebf_{p} \big[N_{T_{\eps}}''\big([\K - \vartheta - \eps, \K + \eps)\big)\big] 
   +\ \Ubf_{p}''\big([\K - \vartheta - \eps, \K + \eps)\big) \\
&\leq\ o(1)\ +\ \frac{2(\vartheta + 2\eps)}{\mu(p)}\quad \text{as } \K \to \infty,
\end{align*}
where the $o(1)$ term is uniform in $p \in [\nu, 1 - \nu]$ by Lemma~\ref{lem:aux4}. Therefore,
\begin{equation}\label{eq:limsup estimate}
\limsup_{\K \to \infty} \sup_{p \in [\nu, 1 - \nu]} 
\left( \Ubf_{p}\big([\K - \vartheta, \K)\big) - \frac{\vartheta}{\mu(p)} \right)\ \leq\ \frac{2\eps}{\mu(1-\nu)}.
\end{equation}

A similar argument yields the lower bound
\begin{align*}
\Ubf_{p}'\big([\K - \vartheta, \K)\big) 
&\geq\ \Ebf_{p} \Bigg[ \sum_{n > T_{\eps}} \1_{[\K - \vartheta, \K)}(S_{n}') \Bigg] \\
&\geq\ \Ubf_{p}''\big([\K - \vartheta + \eps, \K - \eps)\big) 
  \  -\ \Ebf_{p} \big[N_{T_{\eps}}''\big([\K - \vartheta + \eps, \K - \eps)\big)\big] \\
&=\ \frac{2(\vartheta - 2\eps)}{\mu(p)}\ -\ o(1)\quad \text{as } \K \to \infty,
\end{align*}
again with uniform remainder $o(1)$ in $p \in [\nu, 1 - \nu]$. Thus,
\begin{equation}\label{eq:liminf estimate}
\liminf_{\K \to \infty} \inf_{p \in [\nu, 1 - \nu]} 
\left( \Ubf_{p}\big([\K - \vartheta, \K)\big) - \frac{\vartheta}{\mu(p)} \right)
\ \geq\ -\frac{2\eps}{\mu(1-\nu)}.
\end{equation}
Combining \eqref{eq:limsup estimate} and \eqref{eq:liminf estimate}, and noting that $\eps > 0$ was arbitrary, we obtain \eqref{eq:to be shown}. This completes the proof of Proposition~\ref{uniform stopping summand}.
\end{proof}

\subsection{Uniform $L^p$-convergence of $\K/\tau(\K)$}

Before giving the proof of Theorem~\ref{main}, we state the second announced result.

\begin{proposition} \label{prop:uniform key renewal}
For any $m \in \Nb$ and $\beta > 0$,
\begin{equation} \label{eq0:uniform key renewal}
\lim_{\K \to \infty} \sup_{p \in [0,1]} \left| \K^{m} \,\Ebf_{p}\left[ \frac{\xi_{\tau(\K)}^{\beta}}{\big(\mu(p)\tau(\K)\big)^{m}} \right] - \Ebf_{p}[\xi_{\infty}^{\beta}] \right|\ =\ 0,
\end{equation}
and additionally, for $\beta = 0$,
\begin{equation} \label{eq1:uniform key renewal}
\lim_{\K \to \infty} \sup_{p \in [0,1]} \Ebf_{p} \left[\left|\frac{R^{m}}{\big(\mu(p)\tau(\K)\big)^{m}}\,-\,1 \right| \right]\ =\ 0,
\end{equation}
which is equivalent to
\begin{equation} \label{eq2:uniform key renewal}
\lim_{\K\to\infty}\sup_{p \in [0,1]}\Ebf_{p} \left[\left| \frac{R}{\mu(p)\tau(\K)}\,-\,1\right|^m \right]\ =\ 0.
\end{equation}
\end{proposition}

\begin{proof}
For the proof of \eqref{eq0:uniform key renewal}, we note that
$$
\K^{m}\,\Ebf_{p}\left[\frac{\xi_{\tau(\K)}^{\beta}}{\big(\mu(p)\tau(\K)\big)^{m}}\right]\,-\,\Ebf_{p}[\xi_{\infty}^{\beta}]
\ =\ \Ebf_{p}\left[\xi_{\tau(\K)}^{\beta} \left( \frac{\K^{m}}{\big(\mu(p)\tau(\K)\big)^{m}}\,-\,1 \right) \right]\,+\,\Ebf_{p}[\xi_{\tau(\K)}^{\beta}]\,-\,\Ebf_{p}[\xi_{\infty}^{\beta}].
$$
Applying \eqref{eq1:uniform key renewal} and Proposition~\ref{uniform stopping summand}, we obtain
$$
\sup_{p \in [0,1]} \left| \Ebf_{p} \left[ \xi_{\tau(\K)}^{\beta} \left( \frac{\K^{m}}{\big(\mu(p)\tau(\K)\big)^{m}}\,-\,1 \right) \right] \right|
\ \leq\ \sup_{p \in [0,1]} \Ebf_{p} \left[ \left| \frac{\K^{m}}{\big(\mu(p)\tau(\K)\big)^{m}}\,-\,1 \right| \right]\ \xrightarrow{\K \to \infty}\ 0,
$$
and
$$
\sup_{p \in [0,1]} \left| \Ebf_{p}[\xi_{\tau(\K)}^{\beta}]\,-\,\Ebf_{p}[\xi_{\infty}^{\beta}]\right|\ \xrightarrow{\K \to \infty}\ 0.
$$
so \eqref{eq0:uniform key renewal} follows.

\vspace{0.2cm}

Since $\K / \tau(\K) \to \mu(p)$ $\Pbf_p$-a.s. for each $p$, the equivalence of \eqref{eq1:uniform key renewal} and \eqref{eq2:uniform key renewal} follows by a theorem of Riesz; see \cite[Theorem 15.4]{Bauer:01}. We now prove \eqref{eq2:uniform key renewal}. From \eqref{eq:stopping bounds}, we have for all $m \in \Nb$
$$
\Ebf_{p} \left[ \left| \frac{R}{\mu(p)\tau(\K)}\,-\,1 \right|^{m+1} \right]\ \leq\ C\,\Ebf_{p} \left[ \left| \frac{R}{\mu(p)\tau(\K)}\,-\,1 \right|^{m} \right].
$$
for some constant $C > 0$. Hence, it suffices to consider even $m$. Expanding the $m$-th power,
\begin{align*}
\Ebf_{p} \left[ \left( \frac{R}{\mu(p)\tau(\K)} - 1 \right)^m \right] 
\ =\ 1\,+\,\sum_{k=1}^m (-1)^{m-k} \binom{m}{k} \Ebf_{p} \left[ \frac{R^k}{\big(\mu(p)\tau(\K)\big)^k} \right].
\end{align*}
Using $\sum_{k=1}^m (-1)^{m-k} \binom{m}{k} = -1$, we see that it suffices to prove that for all $k \in \Nb$,
\begin{equation} \label{eq:Gerold 1a}
\lim_{\K \to \infty} \sup_{p \in [0,1]} \left| \Ebf_{p}\left[ \frac{R^k}{\big(\mu(p)\tau(\K)\big)^k} \right]\,-\,1 \right|\ =\ 0.
\end{equation}
To this end, we stipulate that all subsequent convergence statements (including big O symbols) are meant to hold uniformly in $p$. We expand $\tau(\K)^{-k}$ via Taylor's theorem around $\Ebf_{p}[\tau(\K)]$
\begin{align}\label{eq:Gerold 2}
\Ebf_{p} \left[ \frac{R^k}{\big(\mu(p)\tau(\K)\big)^k} \right] 
\ =\ \frac{R^k}{\big(\mu(p)\Ebf_{p}[\tau(\K)]\big)^k}\,+\, \frac{k(k+1)R^k}{2 \mu(p)^k} \cdot \Ebf_{p} \left[ \frac{\big(\tau(\K) - \Ebf_{p}[\tau(\K)]\big)^2}{\zeta^{k+2}} \right],
\end{align}
where $\zeta$ is between $\tau(\K)$ and $\Ebf_{p}[\tau(\K)]$. For suitable $0 < c_1 \le 1 \le c_2$, we have that
$$
\Pbf_{p}(c_1 R \le \zeta \le c_2 R)\ =\ 1 \quad \text{for all } R \ge 1, \; p \in [0,1].
$$
Thus,
\begin{align*}
\frac{\textbf{Var}_{p}[\tau(\K)]}{c_{2}^{k+2}R^{k+2}}\ \le\ \Ebf_{p}\bigg[\frac{\big(\tau(\K)-\Ebf_{p}[\tau(\K)]\big)^{2}}{\zeta^{k+2}}\bigg]\ \le\ \frac{\textbf{Var}_{p}[\tau(\K)]}{c_{1}^{k+2}R^{k+2}}.
\end{align*}
From Wald’s first identity,
\begin{equation}\label{eq.wald}
\mu(p)\Ebf_{p}[\tau(\K)]\ =\ \Ebf_{p}[S_{\tau(\K)}]\ \in\ [R,R+1]
\end{equation}
Using this, an application of Wald’s second identity yields
\begin{align*}
\mu(p)^{2}\,\textbf{Var}_{p}[\tau(\K)]\ &=\ \Ebf_{p}\Big[\Big((\mu(p)\tau(\K)-S_{\tau(\K)})+(S_{\tau(\K)}-\Ebf_{p}S_{\tau(\K)})\Big)^2\Big]\\
&=\ \Ebf_{p}\big[\big(S_{\tau(\K)}-\mu(p)\tau(\K)\big)^{2}\big]\,+\,\Ebf_{p}\big[\big(S_{\tau(\K)}-R+O(1)\big)^2\big]\\
&\hspace{1.5cm} -\,2\,\Ebf_{p}\big[\big(S_{\tau(\K)}-\mu(p)\tau(\K)\big)\big(S_{\tau(\K)}-R+O(1)\big)\big]\\
&=\ \textbf{Var}_{p}[\xi]\,\Ebf_{p}[\tau(\K)]\,+\,O(1)\,+\,O\Big(\sqrt{\textbf{Var}_{p}[\xi]\,\Ebf_{p}[\tau(\K)]}\,  \Big)\\
&=\ \textbf{Var}_{p}[\xi]\frac{R}{\mu(p)}\,+\,O(R^{1/2}),
\end{align*}
where the Cauchy-Schwarz inequality has been used for the last two equalities to deduce
\begin{align*}
\Big|\Ebf_{p}\Big[\big(S_{\tau(\K)}&-\mu(p)\tau(\K)\big)\big(S_{\tau(\K)}-R+O(1)\big)\Big]\Big|\\
&\le\ \sqrt{\Ebf_{p}\big[\big(S_{\tau(\K)}-\mu(p)\tau(\K)\big)^{2}\big]\,\Ebf_{p}\big[(S_{\tau(\K)}-R+O(1)\big)^{2}\big]}\\
&=\ \sqrt{\textbf{Var}_{p}[\xi]\,\Ebf_{p}[\tau(\K)]}\,O(1)\ =\ O(R^{1/2})\quad\text{as }R\to\infty.
\end{align*}
Returning to \eqref{eq:Gerold 2} and using \eqref{eq.wald}, we conclude
$$
\Ebf_{p} \left[ \frac{R^k}{\big(\mu(p)\tau(\K)\big)^k} \right]\ =\ \frac{R^k}{\big(\mu(p)\Ebf_{p}[\tau(\K)]\big)^k}\,+\,O(R^{-1}) \ =\ 1\,+\,O(R^{-1}) \quad \text{as } R \to \infty,
$$
uniformly in $p \in [0,1]$, proving \eqref{eq:Gerold 1a}, hence also \eqref{eq2:uniform key renewal}. This completes the proof of Proposition~\ref{prop:uniform key renewal}.
\end{proof}

\section{Proof of Theorem \ref{main}} \label{sec:proof main}

In view of the strategy outlined in Subsection~\ref{subsec:proof strategy via uniform}, we must verify conditions (\ref{eq.gen1}\,--\,\ref{eq.gen4}).

The condition
\begin{equation} \label{eq.rho}
\rho_{\K}(x)\ =\ x\,+\,\frac{\rho(x)}{\K}\,+\,o\left(\frac{1}{\K}\right) \quad \text{as } \K \to \infty
\end{equation}
from Theorem~\ref{main} implies that, for any $n \in \Nb$,
\begin{align*}
\Eb_{x}[(X_{1}^{\K}\,-\,x)^n]\  
&=\ \sum_{k=0}^{n} \binom{n}{k} \Eb_{x}\big[\big(X_{1}^{\K}\,-\,\rho_{\K}(x)\big)^k\big] \cdot \big(\rho_{\K}(x)\,-\,x\big)^{n-k} \\
&=\ \sum_{k=0}^{n} \binom{n}{k} \Eb_{x}\big[\big(X_{1}^{\K}\,-\,\rho_{\K}(x)\big)^k\big] \cdot \left( \frac{\rho(x)}{R} \right)^{n-k}\,+\,o(R^{-1}) \quad \text{as } \K \to \infty,
\end{align*}
where, throughout this section, all convergence statements involving $\Eb_{x}$ are understood to hold uniformly in $x \in [0,1]$, and those involving $\Ebf_{p}$ uniformly in $p \in [0,1]$.

As a consequence of the uniform convergence $R(\rho_{R}(x) - x) \to \rho(x)$, we deduce the following expansion, valid as $\K \to \infty$
\begin{equation} \label{moment formula}
\Eb_{x}[(X_{1}^{R}\,-\,x)^n]\ =\ n\,\Eb_{x}\big[\big(X_{1}^{R}\,-\,\rho_{R}(x)\big)^{n-1}\big] \cdot \frac{\rho(x)}{R} 
\,+\,\Eb_{x}[(X_{1}^{R}\,-\,\rho_{R}(x))^n]\,+\,o(R^{-1}).
\end{equation}

Using \eqref{eq:moments->renewal theory} together with the identity $\sum_{k=0}^{n} \binom{n}{k} (-1)^k = 0$, we find
\begin{align}
\Eb_{x}&\big[\big(X_{1}^{\K} - \rho_{\K}(x)\big)^{n}\big]\ =\ \frac{(-1)^n}{(1 - \del)^n} \, \Eb_{x}\Bigg[\bigg( \frac{S_{\tau(\K)}}{\tau(\K)} - \mu\big(\rho_{\K}(x)\big) \bigg)^n\Bigg] \label{eq2:moment formula}\\
&=\ \frac{(-1)^n}{(1 - \del)^n} \sum_{k=0}^{n} \binom{n}{k} \Eb_{x} \bigg[ \bigg( \frac{S_{\tau(\K)}}{\tau(\K)} \bigg)^k \bigg] \big(-\mu(\rho_{\K}(x))\big)^{n-k} \nonumber\\
&=\ \frac{(-1)^n}{(1 - \del)^n} \sum_{k=1}^{n} \binom{n}{k} \big(-\mu(\rho_{\K}(x))\big)^{n-k} 
\bigg( \Eb_{x} \bigg[ \bigg( \frac{S_{\tau(\K)}}{\tau(\K)} \bigg)^k \bigg] - \mu\big(\rho_{\K}(x)\big)^k \bigg).\nonumber
\end{align}

Combining \eqref{eq2:moment formula} with \eqref{moment formula} will pave the way for the proof of Theorem~\ref{main}, which is presented at the end of this section.

\begin{proposition}\label{prop:Stau over tau moments}
For any $m \in \Nb$, we have
\begin{equation} \label{eq:moment m}
\K \bigg( \Ebf_{p} \bigg[ \bigg( \frac{S_{\tau(\K)}}{\tau(\K)} \bigg)^m \bigg]\,-\,\mu(p)^m \bigg)\ =\ \frac{m(m+1)}{2} \mu(p)^{m-1} \, \mathbf{Var}_{p}[\xi]\,+\,O\big(R^{-1}\big),
\end{equation}
where the $O(R^{-1})$ term is uniform in $p \in [0,1]$.
\end{proposition}

For the proof of this result, we require the following auxiliary lemma, which provides a somewhat tedious but useful expansion for the integral moments of the ratio $\tau(\K)^{-1} S_{\tau(\K)}$.

\begin{lemma}\label{lemma_cases new}
In the given notation, for all $p \in [0,1]$, $m \in \Nb$, and $\beta \ge 0$, we have
\begin{align}
\Ebf_{p}\bigg[\bigg(\frac{S_{\tau(\K)}}{\tau(\K)}\bigg)^{m}\bigg]
\ =\ \sum_{k=0}^{m} \sum_{\substack{\alpha_{1},\ldots,\alpha_{k} \ge 1,\ \beta \ge 0 \\ \alpha_{1}\,+\,\cdots\,+\,\alpha_{k}\,+\,\beta = m}} 
\frac{m!}{\alpha_{1}!\cdots\alpha_{k}! \, \beta! \, k!} \, J_{m,k}^{(p)}(\alpha_{1},\ldots,\alpha_{k} \mid \beta), \label{eq:cases_{n}ew}
\end{align}
where, with $s_k \coloneqq x_1+\cdots+x_k$, the term $J_{m,k}^{(p)}$ is defined as
\begin{align*}
&J_{m,k}^{(p)}(\alpha_{1},\ldots,\alpha_{k} \mid \beta)\\ 
&\coloneqq\ \int \cdots \int \Ebf_{p} \Bigg[\xi_{\tau(\K - s_k)}^{\beta} \cdot \frac{ \prod_{j=0}^{k-1} \big(\tau(\K - s_k)\,+\,j\big)}{ \big(\tau(\K - s_k)\,+\,k\big)^{m} }\Bigg]\Bigg(\prod_{j=1}^{k} x_j^{\alpha_j} \Bigg)\ F_{p}(\mathrm{d}x_k) \cdots F_{p}(\mathrm{d}x_1),
\end{align*}
with the convention that when $k = 0$, the term reduces to
$$ J_{m,0}^{(p)}(m)\ =\ \Ebf_{p}\bigg[\bigg( \frac{\xi_{\tau(\K)}}{\tau(\K)} \bigg)^m \bigg]. $$
\end{lemma}

\begin{proof}
Let $n\in\Nb$. By the multinomial theorem we obtain
\begin{align*}
\Ebf_{p}&\bigg[\1_{\{\tau(\K)=n\}}\bigg(\frac{S_{\tau(\K)}}{\tau(\K)}\bigg)^{m}\bigg]\\
&=\ \sum_{k=0}^{m}\sum_{\alpha_{1},\ldots,\alpha_{k}\ge 1|\beta\ge 0\atop \alpha_{1}+\ldots+\alpha_{k}+\beta=m}\frac{m!}{\alpha_{1}!\cdots\alpha_{k}!\beta!}\sum_{1\le i_{1}<\ldots<i_{k}<n}\Ebf_{p}\Bigg[\1_{\{S_{n-1}<\K\le S_{n}\}}\Bigg(\prod_{j=1}^{k}\xi_{i_{j}}^{\alpha_{j}}\Bigg)\cdot\frac{\xi_{n}^{\beta}}{n^{m}}\Bigg]\\
&=\ \sum_{k=0}^{m}\sum_{\alpha_{1},\ldots,\alpha_{k}\ge 1|\beta\ge 0\atop \alpha_{1}+\ldots+\alpha_{k}+\beta=m}\frac{m!}{\alpha_{1}!\cdots\alpha_{k}!\beta!}\,\binom{n-1}{k}J_{m,k,n}^{(p)}(\alpha_{1},\ldots,\alpha_{k}|\beta),
\end{align*}
where
\begin{gather*}
J_{m,k,n}^{(p)}(\alpha_{1},\ldots,\alpha_{k}|\beta)\ \coloneqq\ \int\!\!\cdots\!\!\int\Ebf_{p}\Bigg[\1_{\{S_{n-1-k}<\K-s_{k}\le S_{n-k}\}}\frac{\xi_{n-k}^{\beta}}{n^{m}}\Bigg]\Bigg(\prod_{j=1}^{k} x_{j}^{\alpha_{j}}\Bigg)\,F_{p}(\mathrm{d}x_{k})\ldots F_{p}(\mathrm{d}x_{1})\\
=\ \int\!\!\cdots\!\!\int\Ebf_{p}\Bigg[\1_{\{\tau(\K-s_{k})=n-k\}}\frac{\xi_{\tau(\K-s_{k})}^{\beta}}{(\tau(\K-s_{k})+k)^{m}}\Bigg]\Bigg(\prod_{j=1}^{k} x_{j}^{\alpha_{j}}\Bigg)\,F_{p}(\mathrm{d}x_{k})\ldots F_{p}(\mathrm{d}x_{1}).
\end{gather*}
Summing over all $n \in \Nb$ yields \eqref{eq:cases_{n}ew}.
\end{proof}

The next auxiliary lemma provides the final ingredient in the proof of Proposition~\ref{prop:Stau over tau moments}, namely suitable asymptotic expansions for the functions $J_{m,k}^{(p)}$.

\begin{lemma}\label{Jasymp}
For any $m,k \in \Nb$ with $k \le m-2$, and for $\alpha_1, \ldots, \alpha_k \in \Nb$, $\beta \ge 0$ satisfying $\alpha_1 + \cdots + \alpha_k + \beta = m$, we have
\begin{equation}\label{Jsmall}
J_{m,k}^{(p)}(\alpha_{1},\ldots,\alpha_{k}|\beta)\ =\ O(R^{-2}).
\end{equation}
Moreover, for each $m \in \Nb$, the following asymptotics hold
\begin{gather}
J_{m,m}^{(p)}(1,1,\ldots,1|0) \ =\ \mu(p)^m \left( 1 - \frac{m(m+1)}{2R} \mu(p) \right)\,+\,O(R^{-2}), \label{J10} \\
J_{m,m-1}^{(p)}(1,1,\ldots,1|1)\ =\ \frac{\mu(p)^{m-1}\,\Ebf_{p}[\xi^2]}{R}\,+\, O(R^{-2})\ =\ J_{m,m-1}^{(p)}(2,1,\ldots,1|0) \label{J11}. 
\end{gather}
\end{lemma}

\begin{proof}
Note first that
\begin{align*}
J_{m,k}^{(p)}(\alpha_{1},\ldots,\alpha_{k}|\beta)\ \le\ \Ebf_{p}\Bigg[ 
\prod_{j=0}^{k-1} \left(1 - \frac{k-j}{\tau(\K - k) + k} \right)\cdot \frac{1}{\big(\tau(\K - k) + k\big)^{m - k}} \Bigg]\ \le\ (R - m)^{-(m - k)},
\end{align*}
which is of order $O(R^{-2})$ for $k \le m - 2$, uniformly in $p$, hence proving \eqref{Jsmall}.
To establish \eqref{J10} and \eqref{J11}, set $\tau_k \coloneqq \tau(R-s_k)$. By definition of the functions $J_{m,k}^{(p)}$, we obtain
\begin{equation*}
    J_{m,m}^{(p)}(1,1,\ldots, 1\vert 0) = \int \cdots \int \mathbf{E}_p\bigg[ \frac{\prod_{j=0}^{m-1} (\tau_m+j)}{(\tau_m+m)^m} \bigg] \Big(\prod_{j=1}^m x_j\Big) F_p(\dd x_1) \ldots F_p(\dd x_m).
\end{equation*}
By combining the asymptotic expansion for large $x$
\begin{equation*}
\frac{\prod_{j=0}^{m-1} (x+j)}{(x+m)^m}=1-\frac{m(m+1)}{2 (x+m)}+ O\big((x+m)^{-2}\big),    
\end{equation*}
with Proposition \ref{prop:uniform key renewal}, we get
\begin{equation*}
\mathbf{E}_p\bigg[ \frac{\prod_{j=0}^{m-1} (\tau_m+j)}{(\tau_m+m)^m} \bigg] = 1-\frac{m(m+1)\mu(p)}{2R}+O(R^{-2}), 
\end{equation*}
which proves \eqref{J10}. Similarly, 
\begin{equation*}
    \mathbf{E}_p\bigg[ \xi_{\tau_{m-1}}\frac{\prod_{j=0}^{m-2}(\tau_{m-1}+j)}{(\tau_{m-1}+m-1)^m}\bigg] =  \mathbf{E}_p \bigg[ \frac{\xi_{\tau_{m-1}}}{\tau_{m-1}+m-1} \bigg] + O\big( ( \tau_{m-1}+m-1)^{-2}\big),
\end{equation*}
and, by applying Proposition \ref{uniform stopping summand}, we conclude that
\begin{align*}
    J_{m,m-1}^{(p)}(1,1,\ldots,1 \vert 1) =& \int \cdots \int \mathbf{E}_p\bigg[ \xi_{\tau_{m-1}} \frac{\prod_{j=0}^{m-2}(\tau_{m-1}+j)}{(\tau_{m-1}+m-1)^m} \bigg] \Big( \prod_{j=1}^{m-1} x_j \Big) F_p(\dd x_1) \cdots F_p(\dd x_{m-1})
    \\
    =&\frac{\mu(p)^{m-1} \mathbf{E}_p[\xi_\infty]}{R}+O(R^{-2})=\frac{\mu(p)^{m-1}\,\Ebf_{p}[\xi^2]}{R}\,+\, O(R^{-2}).
\end{align*}
The asymptotic expansion for $J_{m,m-1}^{(p)}(2,1,\ldots,1 \vert 0)$ follows analogously, which completes the proof.
\end{proof}

\begin{proof}[Proof of Proposition~\ref{prop:Stau over tau moments}]
Recall that all asymptotic expansions stated below are understood to hold uniformly in $p$. Note also that for any $m \in \Nb$, the symmetry of the integrals implies
$$ J_{m,m-1}^{(p)}(2,1,\ldots,1|0)\ =\ J_{m,m-1}^{(p)}(1,2,1,\ldots,1| 0)\ = \cdots 
=\ J_{m,m-1}^{(p)}(1,\ldots,1,2| 0). $$
Using Lemmata~\ref{lemma_cases new} and \ref{Jasymp}, we expand
\begin{align*}
\Ebf_{p}\Bigg[\bigg(\frac{S_{\tau(\K)}}{\tau(\K)}\bigg)^{m}\Bigg] 
&=\ m \cdot J_{m,m-1}^{(p)}(1,\ldots,1|1) 
\,+\,\frac{m(m-1)}{2} \cdot J_{m,m-1}^{(p)}(2,1,\ldots,1 \mid 0) \\
&\quad +\,J_{m,m}^{(p)}(1,\ldots,1|0) \,+\,O(R^{-2}) \\
&=\ \frac{m \mu(p)^{m-1} \, \Ebf_{p}[\xi^2]}{R} 
\,+\,\frac{m(m-1) \mu(p)^{m-1} \, \Ebf_{p}[\xi^2]}{2R} \\
&\quad +\, \mu(p)^m \left(1 - \frac{m(m+1)}{2R} \mu(p) \right) 
\,+\,O(R^{-2}) \\
&=\ \mu(p)^m 
\,+\,\frac{m(m+1)\mu(p)^{m-1}}{2R} 
\left( \Ebf_{p}[\xi^2]\,-\,\mu(p)^2 \right) 
\,+\,O(R^{-2}),
\end{align*}
which completes the proof.
\end{proof}

We are now ready to prove the main result.

\begin{proof}[Proof of Theorem~\ref{main}]
As outlined in Section~\ref{subsec:proof strategy via uniform}, the first step is to verify conditions (\ref{eq.gen1}\,--\,\ref{eq.gen4}). All convergence statements below are understood to hold uniformly in $x \in [0,1]$.

We begin with the first moment. Setting $n = 1$ in \eqref{moment formula}, and using \eqref{eq2:moment formula} together with Proposition~\ref{prop:Stau over tau moments}, we find
\begin{align} \label{first_moment}
R\,\Eb_{x}[X_{1}^{R} - x]\ &=\ - \frac{\textbf{Var}_{\rho_{R}(x)}[\xi]}{1 - \del}\,+\,\rho(x)\,+\,o(1) 
\ =\  - \frac{\textbf{Var}_{x}[\xi]}{1\,-\, \del}\,+\,\rho(x)\,+\,o(1),
\end{align}
where the second identity follows from the uniform convergence in \eqref{eq.rho}. This establishes \eqref{eq.gen1}.

\vspace{.1cm}
Now consider higher moments $(n\geq 2)$. Combining \eqref{moment formula}, \eqref{eq2:moment formula}, \eqref{first_moment}, and Proposition~\ref{prop:Stau over tau moments}, we obtain
\begin{align} \label{eq.moments}
R\,&\Eb_{x}[(X_{1}^{R}-x)^n]  
\ =\ R \, \Eb_{x}\big[\big(X_{1}^{R}-\rho_{R}(x)\big)^n\big]\,+\,o(1) \nonumber \\
&=\ \frac{(-1)^n}{(1-\del)^n} \sum_{k=1}^{n} 
\binom{n}{k} \big(-\mu(\rho_{R}(x))\big)^{n-k} \cdot \frac{k(k+1)}{2} 
\big(\mu(\rho_{R}(x))\big)^{k-1} \, \textbf{Var}_{\rho_{R}(x)}[\xi]\,+\,o(1).
\end{align}

For $n = 2$, this yields
\begin{align*}
R \, \Eb_{x}[(X_{1}^{R} - x)^2]\ &=\ \frac{1}{(1 - \del)^2} 
\left( -2 \mu(\rho_{R}(x)) \textbf{Var}_{\rho_{R}(x)}[\xi] 
\,+\,3 \mu(\rho_{R}(x)) \textbf{Var}_{\rho_{R}(x)}[\xi] \right) \,+\,o(1) \\
\ &=\ \frac{\mu(\rho_{R}(x)) \textbf{Var}_{\rho_{R}(x)}[\xi]}{(1 - \del)^2} \,+\,o(1) \\
&=\ \frac{\mu(x) \textbf{Var}_{x}[\xi]}{(1 - \del)^2} \,+\,o(1),
\end{align*}
using again \eqref{eq.rho}. This establishes \eqref{eq.gen2}.

\vspace{.1cm}
Next, for $n = 3$, we find
\begin{align*}
R \, \Eb_{x}[(X_{1}^{R} - x)^3]\ &=\ \frac{1}{(1 - \del)^3} 
\Big( 3 \mu(\rho_{R}(x))^2 \textbf{Var}_{\rho_{R}(x)}[\xi] 
\,-\,9 \mu(\rho_{R}(x))^2 \textbf{Var}_{\rho_{R}(x)}[\xi] \\
&\quad +\, 6 \mu(\rho_{R}(x))^2 \textbf{Var}_{\rho_{R}(x)}[\xi] \Big)\,+\,o(1)\ =\ o(1),
\end{align*}
confirming \eqref{eq.gen3}.

\vspace{.1cm}
Finally, for $n = 4$, we compute
\begin{align*}
R\,\Eb_{x}[(X_{1}^{R} - x)^4]\ &=\ \frac{1}{(1 - \del)^4} \Big( 
-4 \mu(\rho_{R}(x))^3 \textbf{Var}_{\rho_{R}(x)}[\xi] 
\,+\,18 \mu(\rho_{R}(x))^3 \textbf{Var}_{\rho_{R}(x)}[\xi] \\
&\quad - 24 \mu(\rho_{R}(x))^3 \textbf{Var}_{\rho_{R}(x)}[\xi] 
\,+\,10 \mu(\rho_{R}(x))^3 \textbf{Var}_{\rho_{R}(x)}[\xi] 
\Big) \,+\,o(1)\ =\ o(1),
\end{align*}
which proves \eqref{eq.gen4}.

\vspace{.1cm}
Using the Taylor expansion argument from Section~\ref{subsec:proof strategy via uniform} and combining with (\ref{eq.gen1}\,--\,\ref{eq.gen4}), we conclude that for any $f \in C^4([0,1])$,
$$ \lim_{R \to \infty} \sup_{x \in [0,1]} 
\lvert \mathcal{A}^R f(x) - \mathcal{A} f(x) \rvert\ =\ 0. $$

Since the diffusion coefficient $x \mapsto a(x) = x(1 - x)(1 - (1 - \vartheta)x)$ is non-negative, twice continuously differentiable, and vanishes at $x\in\{0,1\}$, and since the drift term $x \mapsto d(x) = -(1 - \vartheta)x(1 - x) + \rho(x)$ is Lipschitz continuous with $d(0) = \rho(0) = \beta_{0} \geq 0$ and $d(1) = \rho(1) = -\beta_1 \leq 0$, we may invoke \cite[Chap.~8, Thm.~2.1]{EK86} to conclude that $X$ is Feller and that $C^\infty([0,1])$ is a core for its generator $\mathcal{A}$. The result then follows from \cite[Theorem~1.6.1]{EK86}.
\end{proof}

\section{The model variant revisited} \label{sec:variant}

We now briefly return to the model variant described in Subsection~\ref{subsec:model variant}, where the stopping rule for each generation is to reject the first individual that would cause an overshoot of the available resources. We have already noted that if $\rho(x) \equiv 0$, the limiting diffusion model given by Theorem~\ref{thm:model variant} is neutral. The intuitive reason for this is the absence of the effect of the last individual. Specifically, the only reason small individuals experience a disadvantage in the original two-size Wright--Fisher model is the size-biased law of the stopping summand $\xi_{\tau(\K)}$, which does not apply in the variant, as the individual associated with $\xi_{\tau(\K)}$ is rejected.

The proof of Theorem~\ref{thm:model variant} is very similar to the proof of Theorem~\ref{main}, but instead of $\tau(\K)$, it requires considering the modification
$$
\overline{\tau}(\K) \coloneqq \inf \{ n \in \mathbb{N} : S_n > \K \}.
$$
As the counterpart to~\eqref{l1}, we then have
\begin{equation} \label{l2}
\Pb_{x}\big((\bX_{1}^{\K},\bM_{1}^{\K})\in\cdot\big)\ =\ \Pb_{x}\Bigg(\bigg(\frac{1}{1-\del}\Big(1-\frac{S_{\btau(\K)-1}}{\btau(\K)-1}\Big),\btau(\K)-1\bigg)\in\cdot\Bigg).
\end{equation}
With the help of this relation, the expression $\K \, \mathbb{E}_x [ (\bX_1^{\K} - x)^k]$ as $\K \to \infty$ can, for $k \in \{1, 2, 3, 4\}$, be analyzed in the same way as in the previous section, without the need for new arguments. For $k \in \{2, 3, 4\}$, the same results as in the original model are obtained, and for $k = 1$, we even have an explicit result, as the following lemma shows.

\begin{lemma}\label{lem:zero drift model variant}
For any fixed $\del \in (0,1)$ and $\rho(x) = 0$, 
\begin{equation*}
\mathbb{E}_{x}\big[\bX_{1}^{R} - x\big] = 0.
\end{equation*}
\end{lemma}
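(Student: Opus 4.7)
The plan is to exploit the rejection rule in the model variant, which—unlike the original model—makes the random sample size $\bar M^{R}$ a symmetric functional of the first $\bar M^{R}$ increments. In view of \eqref{l2}, the relation $S_{\bar M^{R}} = \bar M^{R} - (1-\vartheta)J$, with $J:=\sum_{i=1}^{\bar M^{R}}\mathbf{1}_{\{\xi_{i}=\vartheta\}}$ denoting the number of type-$0$ individuals, yields the sample-mean representation
\begin{equation*}
\bar X_{1}^{R}\ =\ \frac{1}{\bar M^{R}}\sum_{i=1}^{\bar M^{R}}\mathbf{1}_{\{\xi_{i}=\vartheta\}}\quad\Pb_{x}\text{-a.s.}
\end{equation*}
The starting point of the proof is this identity, which reduces the task to computing the expectation of a random average.

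Next, I would partition according to the value of $\bar M^{R}$. For each $k\ge 1$,
\begin{equation*}
\{\bar M^{R}=k\}\ =\ \{\btau(R)=k+1\}\ =\ \{S_{k}\le R<S_{k}+\xi_{k+1}\},
\end{equation*}
so this event is measurable with respect to the pair $(S_{k},\xi_{k+1})$. The crucial observation is that $S_{k}=\xi_{1}+\ldots+\xi_{k}$ is symmetric in $(\xi_{1},\ldots,\xi_{k})$, and the random variable $\xi_{k+1}$ is independent of the first $k$ increments. Since the $\xi_{i}$ are iid under $\Pb_{x}$, the joint law of $(\xi_{1},\ldots,\xi_{k+1})$ is invariant under permutations of the first $k$ coordinates, and hence
\begin{equation*}
\Pb_{x}\big(\xi_{i}=\vartheta,\,\bar M^{R}=k\big)\ =\ \Pb_{x}\big(\xi_{1}=\vartheta,\,\bar M^{R}=k\big)\quad\text{for each }1\le i\le k.
\end{equation*}
This is the only nontrivial step and is exactly the exchangeability argument that fails in the original model, because there $M^{R}=\tau(R)$ includes the stopping summand $\xi_{\tau(R)}$ and the bias in favour of type-$1$ then survives.

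Summing the identity above over $i=1,\ldots,k$ and then over $k\ge 1$, the factor $1/k$ is cancelled by a factor $k$, giving
\begin{equation*}
\Eb_{x}\big[\bar X_{1}^{R}\big]\ =\ \sum_{k\ge 1}\Pb_{x}\big(\xi_{1}=\vartheta,\,\bar M^{R}=k\big)\ =\ \Pb_{x}\big(\xi_{1}=\vartheta,\,\bar M^{R}\ge 1\big).
\end{equation*}
For any $R\ge 1$ (which is implicit for the variant to be non-degenerate, since $\xi_{1}\le 1$) one has $\bar M^{R}\ge 1$ almost surely, so the event $\{\bar M^{R}\ge 1\}$ can be dropped. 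Since $\rho(x)=0$ means $\rho_{R}(x)=x$, and hence $\Pb_{x}(\xi_{1}=\vartheta)=x$, the conclusion $\Eb_{x}[\bar X_{1}^{R}-x]=0$ follows. The main obstacle is conceptual rather than computational: recognising that the rejection rule is precisely what restores the symmetry of $\{\bar M^{R}=k\}$ in the first $k$ increments, a feature that immediately removes the stopping-induced drift seen in Theorem \ref{main}.
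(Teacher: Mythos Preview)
Your proof is correct and takes essentially the same approach as the paper's: both arguments rest on the observation that the event $\{\bar M^{R}=k\}=\{S_{k}\le R<S_{k}+\xi_{k+1}\}$ is symmetric in $(\xi_{1},\ldots,\xi_{k})$, so the random average $\frac{1}{k}\sum_{i=1}^{k}g(\xi_{i})$ has the same expectation as $g(\xi_{1})$ on this event. You apply this with $g(\xi)=\mathbf{1}_{\{\xi=\vartheta\}}$, while the paper applies it with $g(\xi)=\xi$ to show $\Ebf_{p}\big[S_{\btau(R)-1}/(\btau(R)-1)\big]=\mu(p)$; the two are linearly equivalent since $\xi=1-(1-\vartheta)\mathbf{1}_{\{\xi=\vartheta\}}$. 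The paper additionally remarks that the identity can be obtained by recognising $(n^{-1}S_{n})_{n\ge 1}$ as a reverse martingale with $\btau(R)-1$ a reverse stopping time, which gives a second route you do not mention.
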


\begin{proof}
Since, by \eqref{l2},
\begin{equation*}
\mathbb{E}_{x}\big[ \bX_{1}^{R} - x \big] 
\ =\ \frac{1}{1 - \del} \left(\mu\big(\rho_{R}(x)\big)\,-\,\mathbb{E}_{x}\left[ \frac{S_{\overline{\tau}(R) - 1}}{\overline{\tau}(R) - 1}\right] \right),
\end{equation*}
it suffices to show that, for any $p \in [0,1]$,
\begin{equation}\label{eq:Stau-1}
\mathbb{E}_{p} \left[ \frac{S_{\overline{\tau}(R) - 1}}{\overline{\tau}(R) - 1} \right]\ =\ \mu(p).
\end{equation}
To this end, let $\xi$ be a generic copy of the $\xi_{i}$, independent of all other random variables under each $\mathbb{P}_{p}$. Then,
\begin{align*}
\mathbb{E}_{p} \left[ \frac{S_{\overline{\tau}(R) - 1}}{\overline{\tau}(R) - 1} \right]\  
&=\ \sum_{n\ge 1} \frac{1}{n} \sum_{i=1}^{n} \mathbb{E}_{p} \big[ \xi_{i}\,\mathbf{1}_{\{\overline{\tau}(R) - 1 = n\}}\big] \\
&=\ \sum_{n\ge 1} \frac{1}{n} \sum_{i=1}^{n} \mathbb{E}_{p} \big[\xi\,\mathbf{1}_{\{\bT(R - \xi) = n\}}\big] \ =\ \sum_{n \ge 1}\mathbb{E}_{p} \big[\xi\,\mathbf{1}_{\{ \bT(R - \xi) = n \}}\big]\ =\  \mu(p),
\end{align*}
as required.
\end{proof}

\medskip

Let us note in passing that Lemma~\ref{lem:zero drift model variant} can also be derived by observing that the sequence $(n^{-1} S_{n})_{n \ge 1}$ forms a reverse martingale, and that $\overline{\tau}(R)-1=\sup \{n\ge 0:S_{n}\le R\}$ is an associated reverse stopping time. This implies, see e.g.\ \cite[p.\,350]{Alsmeyer:1988} for more details, that 
$$ \mathbb{E}_{p} \left[ \frac{S_{\overline{\tau}(R) - 1}}{\overline{\tau}(R) - 1} \right]\ =\  \mathbb{E}_{p}[S_1]\ =\ \mu(p), $$
and thus we obtain \eqref{eq:Stau-1} once again.

\section{Brief Discussion of the Long-Term Behavior} \label{sec:asympt}

We conclude with a brief discussion of the long-term behavior of the solution to SDE~\eqref{eq.SDE} and its interpretation in the context of the underlying two-size Wright--Fisher model. This analysis does not require new theoretical developments, but instead relies on standard methods, as described in \cite{Durrett:08} and \cite{Etheridge:11}.

A key object in characterizing the long-term behavior is the \emph{scale function} $S(x)$, defined by
\begin{equation} \label{eq.scale}
S(x) \coloneqq \int_{x_{0}}^{x} \exp\left( -\int_{\eta}^{y} \frac{d(z)}{\sigma^{2}(z)} \,\dd z \right) \dd y,
\end{equation}
where $d$ and $\sigma$ denote the drift and diffusion coefficients, respectively, of the SDE under study, and $x_{0}, \eta$ are arbitrary points in the interval $(0,1)$. In what follows, we use the scale function to describe certain aspects of the long-term behavior of the SDE~\eqref{eq.SDE}.

We begin with the extinction probability of the small individuals as a function of their initial proportion, which is meaningful only in the absence of mutation, i.e., when $\rho(x) = s(x)\,x(1 - x)$ for some Lipschitz function $s : [0,1] \to \mathbb{R}$. 
With this in mind, define $T_a$, for $a \in \{0,1\}$, as the first hitting time of $a$ by the process $X$. The case $a = 0$ corresponds to extinction, and $a = 1$ to fixation. By standard results for one-dimensional SDEs (see \cite[Lemma 3.14]{Etheridge:11}), we have
\begin{equation*}
\mathbb{P}_{x}(T_{0} < T_1) = \frac{S(1) - S(x)}{S(1) - S(0)},
\end{equation*}
where $S(x)$ denotes the scale function. Substituting the drift and diffusion coefficients from the SDE~\eqref{eq.SDE} into the definition~\eqref{eq.scale} of the scale function yields
\begin{equation*}
S(x)\ =\ \int_{x_{0}}^{x} \exp\left( 
 2 (1 - \del) \int_{\eta}^{y} \frac{1}{1 - (1 - \del) z} \, \mathrm{d}z 
- 2 \int_{\eta}^{y} \frac{s(z)}{1 - (1 - \del) z} \, \mathrm{d}z
\right) \, \mathrm{d}y.
\end{equation*}
The first integral with respect to $z$ can always be computed explicitly; the second depends on the specific form of the selection function $s$. In the case of genic selection, i.e., $\rho(x) = s\,x(1 - x)$ with constant $s$, the extinction probability is given by
\begin{equation} \label{eq.extinction}
\mathbb{P}_{x}(T_{0} < T_1)\ =\  
\begin{cases}
\displaystyle\frac{\del^{-1 + 2s(1 - \del)^{-1}} - \big(1 - (1 - \del)x\big)^{-1 + 2s(1 - \del)^{-1}}}{\del^{-1 + 2s(1 - \del)^{-1}} - 1} & \text{if } 1 - \del \neq 2s, \\[10pt]
\displaystyle\frac{\ln(\del) - \ln\big(1 - (1 - \del)x\big)}{\ln(\del)} & \text{if } 1 - \del = 2s.
\end{cases}
\end{equation}

Figure~\ref{fig.fixation2} illustrates the extinction probability \eqref{eq.extinction} as a function of $x$, the initial proportion of small individuals, for various values of $s$ and $\del$. In the absence of selection ($s = 0$), the extinction probability exceeds $1 - x$, and this disadvantage increases as the size parameter $\del$ decreases. When $s = 1 - \del$, the model becomes neutral, and the extinction probability equals $1 - x$ (see Figure~\ref{subfig}, where $s = \del = 0.5$).

Interestingly, for $s \in \{1.5, 2\}$ and sufficiently large $x$, the extinction probability no longer decreases with increasing $\del$. To explain this, assume first that $\del \in [0,1]$ with $s < 1$. In this regime, the drift term 
$$ d(x)\ =\ \big(-(1 - \del) + s\big) \, x(1 - x) $$
decreases as $\del$ decreases. At the same time, the diffusion coefficient decreases as well, further limiting the process's deviation from its drift. The combined effect -- a stronger push toward $0$ for $\del < 1 - s$ and a weaker push toward $1$ for $\del > 1 - s$, along with reduced stochastic noise -- leads to an increased likelihood of extinction.

In contrast, when $s > 1$ and $\del \in [0,1]$, the drift is always positive, pushing the process toward $1$. Decreasing $\del$ weakens this drift, which might suggest, as before, that extinction becomes more likely. However, in this case the diffusion coefficient contains the additional factor $1 - (1 - \del)x$, which vanishes as $(1 - \del)x \to 1$. Consequently, if the process starts close to $1$, the reduced noise -- despite the weaker drift -- makes it harder to escape the vicinity of $1$. This results in a lower probability of reaching $0$.

\begin{figure}[h!]\centering 
\begin{subfigure}[b]{0.49\textwidth}
	\includegraphics[width=0.95\textwidth]{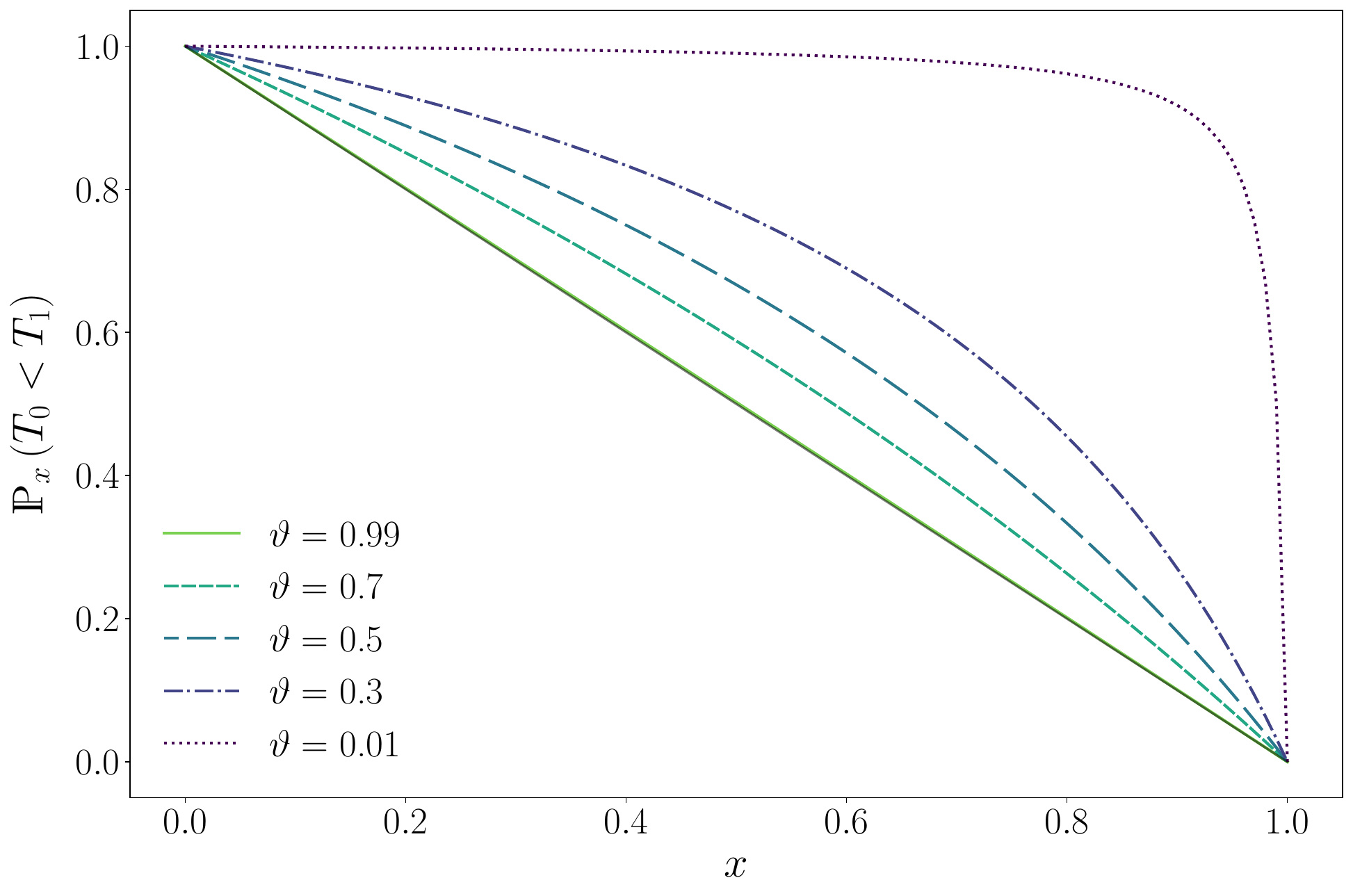}
	\caption{$s = 0$}
	\label{subfig0} 
\end{subfigure}
\begin{subfigure}[b]{0.49\textwidth}
	\includegraphics[width=0.95\textwidth]{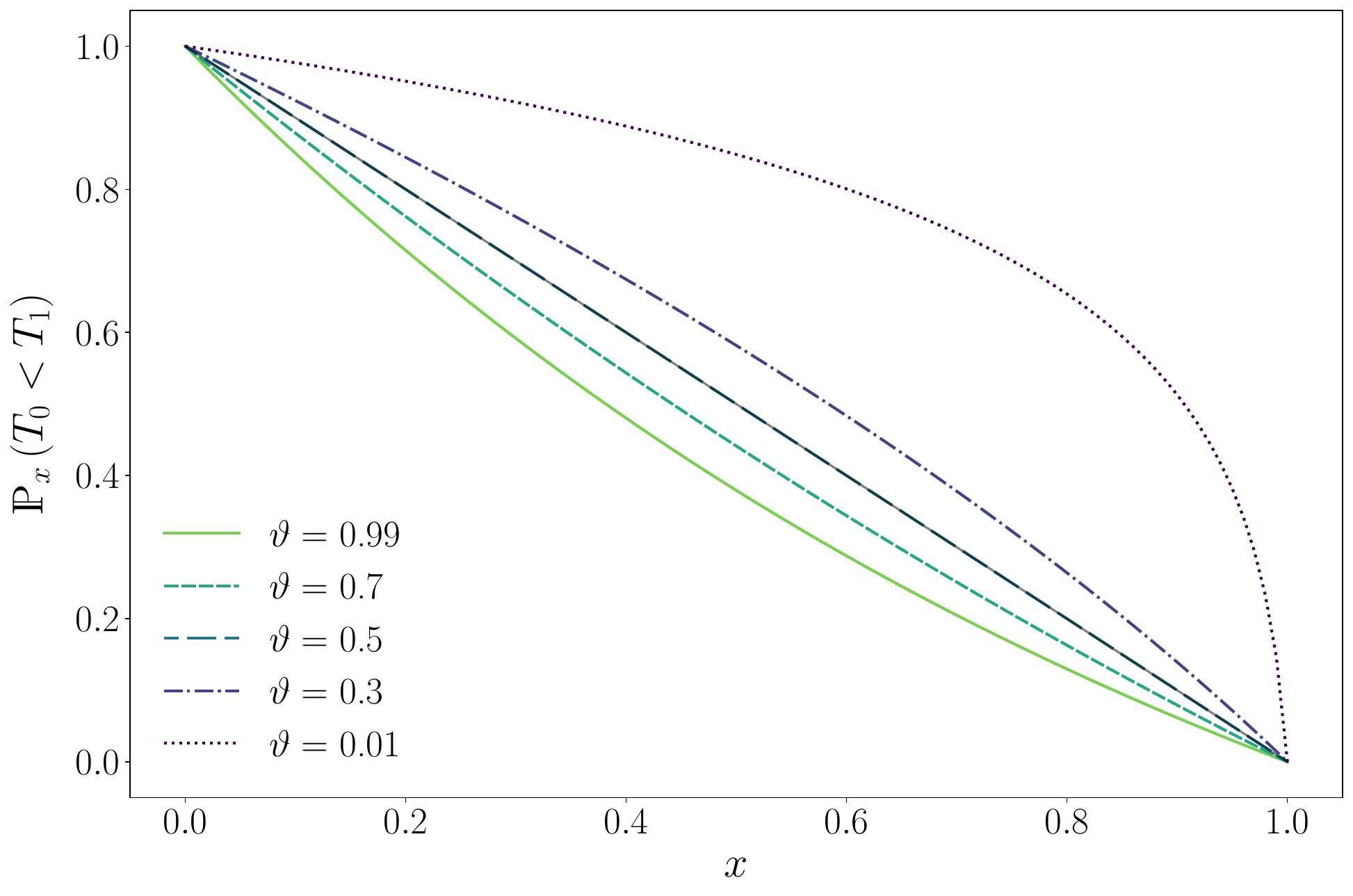} 
	\caption{$s = 0.5$}
	\label{subfig}
\end{subfigure}
\\[10pt]
\begin{subfigure}[c]{0.49\textwidth}
	\includegraphics[width=0.95\textwidth]{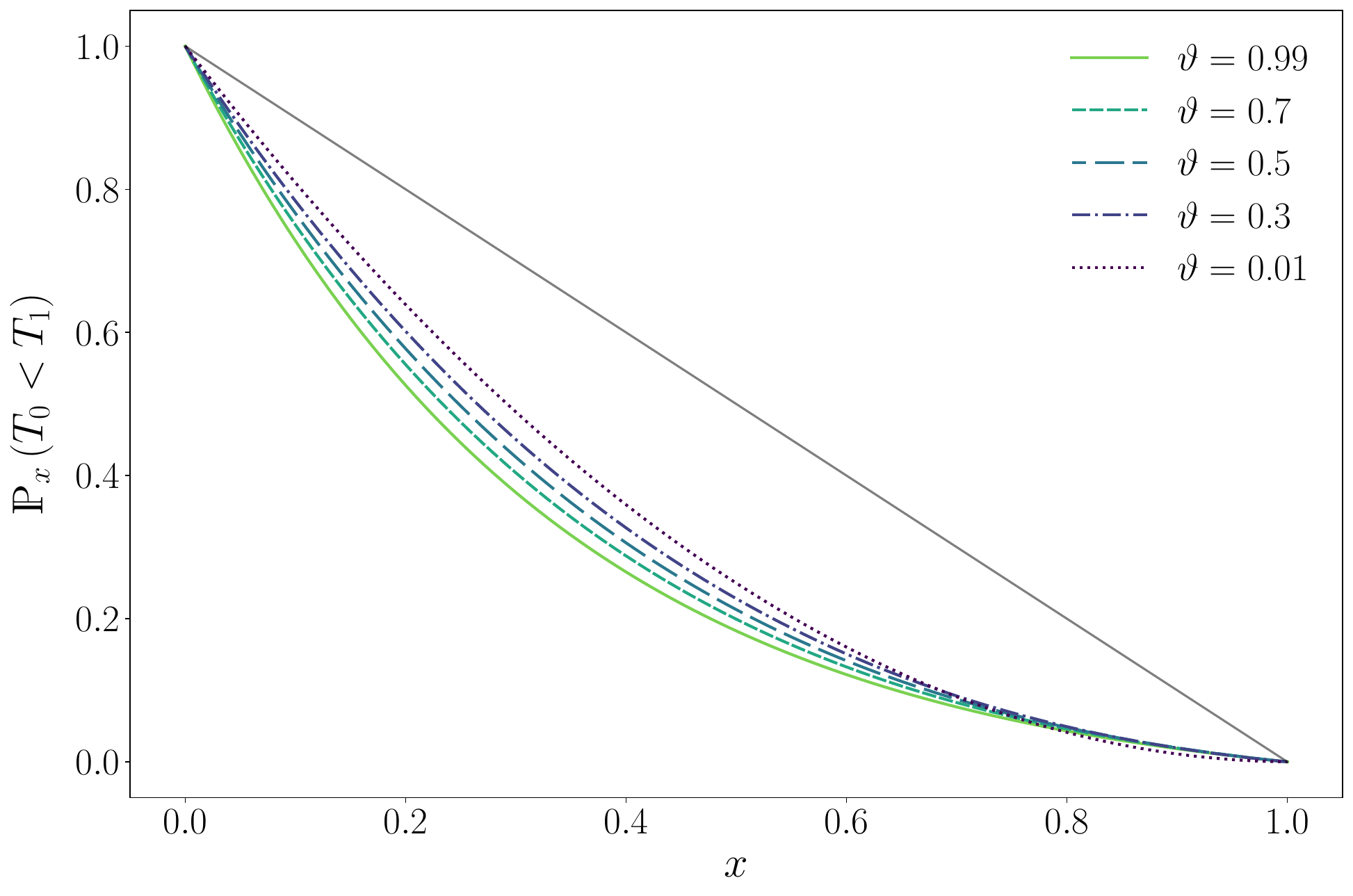}
	\caption{$s = 1.5$}
	\label{subfig1} 
\end{subfigure} 
\begin{subfigure}[c]{0.49\textwidth}
	\includegraphics[width=0.95\textwidth]{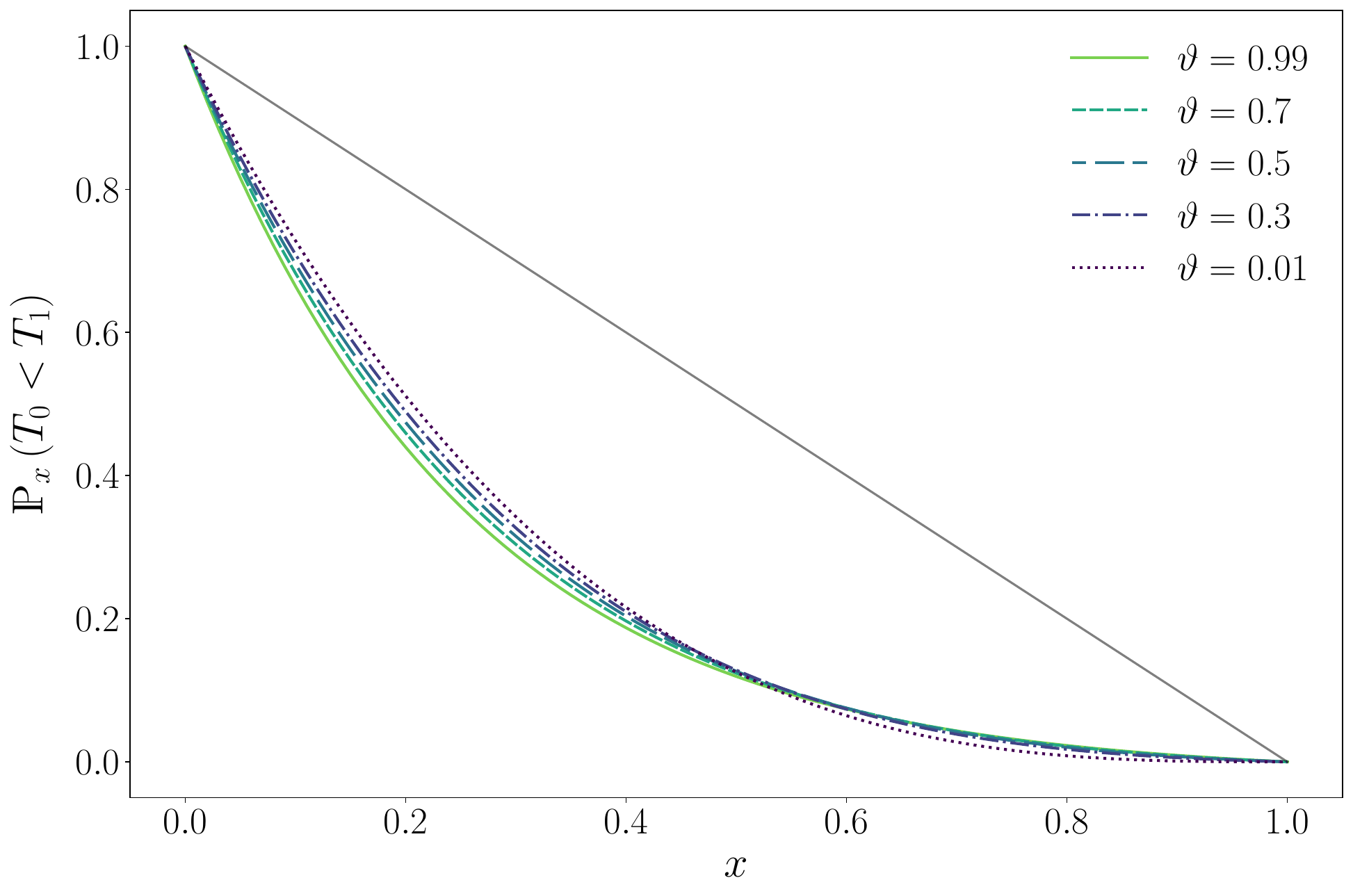}
	\caption{$s = 2$}
	\label{subfig2} 
\end{subfigure}
\caption{The extinction probability $\Pb_{x}(T_{0} < T_1)$ from \eqref{eq.extinction} for 
solutions of the SDE \eqref{eq.SDE} with $\rho(x) = s x(1 - x)$, for different values of $s$. For each $s$, a fixed set of values for $\del$ is considered. The function $f(x) = 1 - x$ (black, solid line) is plotted for reference.}
\label{fig.fixation2}
\end{figure}

Another quantity of interest in biological applications is the mean time to absorption, $\Eb_{x}[T_{0,1}]$, as a function of $x$, where $T_{0,1} \coloneqq T_{0} \wedge T_1$. This is given by
\begin{equation} \label{eq.fixation_def}
\Eb_{x}[T_{0,1}] = \int_{0}^{1} G(x,\nu)\, \mathrm{d}\nu,
\end{equation}
where $G(x, \nu)$ is the Green’s function, defined as
\begin{align*}
G(x, \nu) = 
\begin{cases}
\displaystyle 2\,\frac{S(1) - S(x)}{S(1) - S(0)} \cdot \frac{S(\nu) - S(0)}{\sigma^2(\nu)\, S'(\nu)} &\text{if } 0 < \nu < x, \\[6pt]
\displaystyle 2\,\frac{S(x) - S(0)}{S(1) - S(0)} \cdot \frac{S(1) - S(\nu)}{\sigma^2(\nu)\, S'(\nu)} &\text{if } x < \nu < 1,
\end{cases}
\end{align*}
with $S(x)$ the scale function from \eqref{eq.scale}.

Even in the case of genic selection, i.e., when $d(x) = (-(1 - \del) + s)x(1 - x)$, the integral in \eqref{eq.fixation_def} generally cannot be evaluated analytically. However, in the absence of selection ($s = 0$), a straightforward computation yields the explicit formula:
\begin{align}\label{eq.fix.analytic}
\Eb_{x}[T_{0,1}]\ =\ 2 \ln(1 - x) \cdot \frac{\del^{-1} - \big(1 - (1 - \del)x\big)^{-1}}{1 - \del^{-1}} + 2 \ln(x) \cdot \frac{1 - \big(1 - (1 - \del)x\big)^{-1}}{1 - \del}.
\end{align}
A plot of this expression is shown in Figure~\ref{fig.fix2}. Although the integral in \eqref{eq.fixation_def} cannot be solved in closed form for $s \neq 0$, it can be evaluated numerically. We provide such a plot for the case $s = 2$—where the extinction probabilities exhibit non-monotonic dependence on $\del$—in the same figure.
In both cases, one observes that the expected time to absorption increases as $\del$ decreases.

\begin{figure}[h!]\centering
\begin{subfigure}[t]{0.49\textwidth}
	\includegraphics[width=0.99\textwidth]{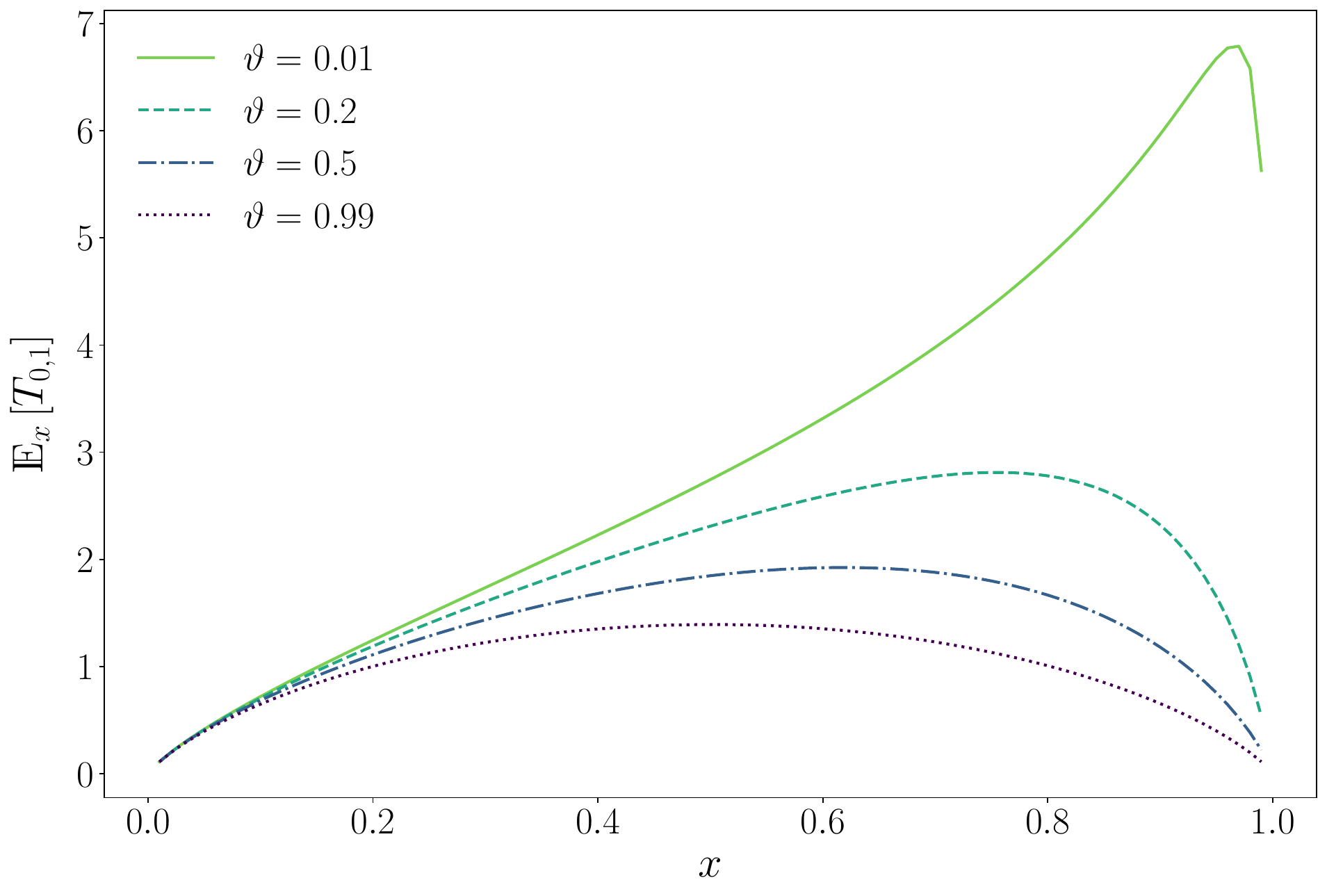}  
	\caption{Analytical result from \eqref{eq.fix.analytic} for the case $s = 0$.}
\end{subfigure}
\begin{subfigure}[t]{0.49\textwidth}
	\includegraphics[width=0.99\textwidth]{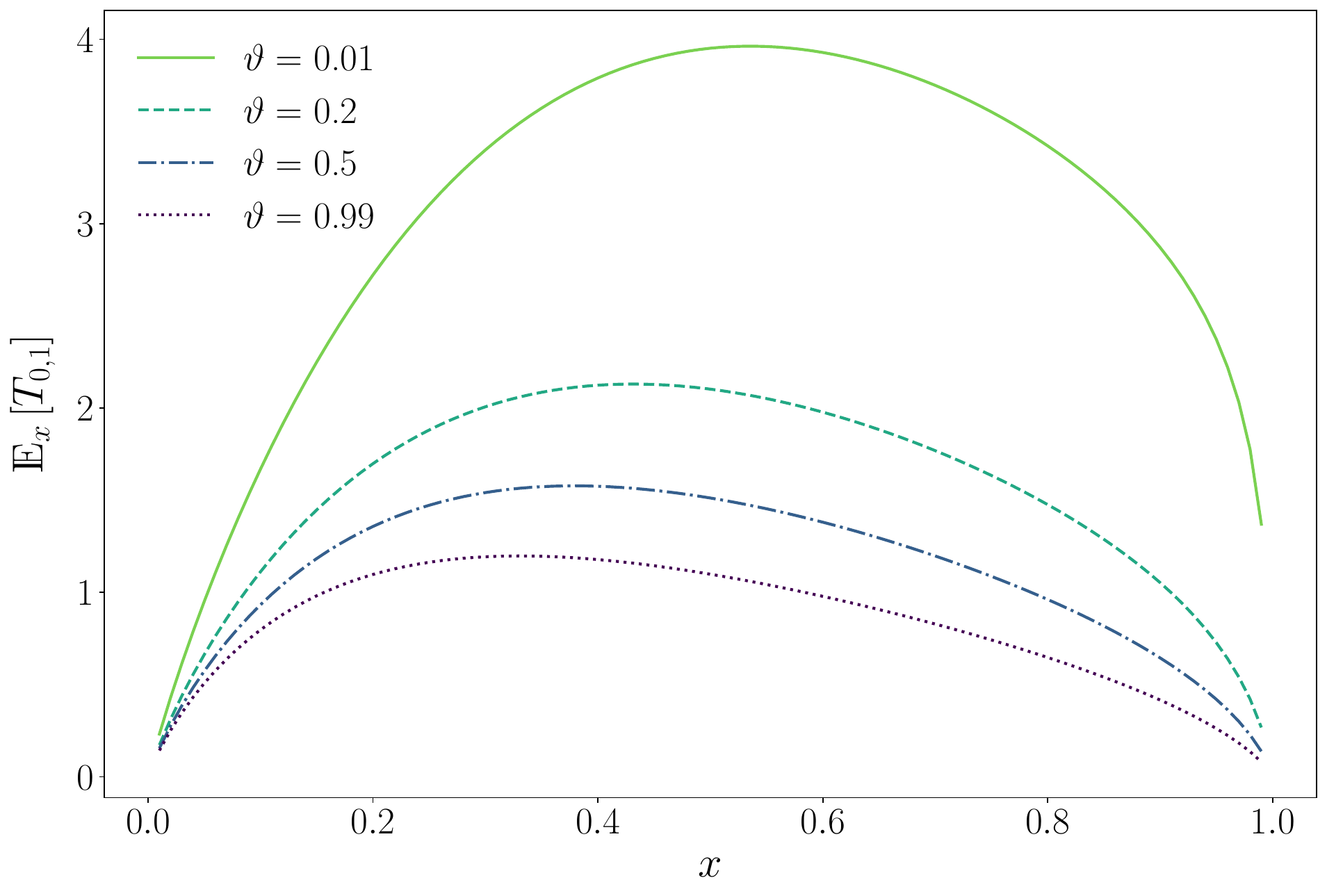} 
	\caption{Result for $s = 2$, obtained via numerical integration (using the \texttt{scipy} package).}
\end{subfigure}
\caption{Mean time to absorption $\Eb_{x}[T_{0,1}]$ for the solution of the SDE \eqref{eq.SDE} with drift term $d(x) = (-(1 - \del) + s)x(1 - x)$.}
\label{fig.fix2}
\end{figure}

\vspace{.1cm}
Finally, we briefly comment on the stationary distribution of $X$ in the case where $\beta_{0}, \beta_1 > 0$.  According to \cite[Theorem 3.24]{Etheridge:11}, the density of the stationary distribution is given by the ratio $m(x) / \int_{0}^1 m(x)\, \mathrm{d}x$, where
$$ m(x)\ \coloneqq\ \frac{1}{\sigma^2(x)\, S'(x)}, $$
and $S(x)$ is again the scale function. In the setting
$$ \rho(x)\ =\ \beta_{0}(1 - x) - \beta_1 x + s\, x(1 - x) $$
(which corresponds to genic selection favoring small individuals and bi-directional mutation), the density of the stationary distribution simplifies to
\begin{equation*}
C(\beta_{0}, \beta_1, \vartheta, s)\,
x^{2\beta_{0} - 1}\,
(1 - x)^{2\beta_1 \del^{-1} - 1}\,
\big(1 - (1 - \del)x\big)^{-2\beta_{0} - 2\beta_1 \del^{-1} - 2s(1 - \del)^{-1} + 1},
\end{equation*}
where $C(\beta_{0}, \beta_1, \vartheta, s) > 0$ is a normalizing constant. This constant can be expressed in terms of hypergeometric functions and evaluated numerically.\\

\textbf{Acknowledgments.}  
The authors are very grateful to two anonymous reviewers for their careful reading and insightful comments, which significantly improved the manuscript. We also thank Ellen Baake for numerous helpful discussions and valuable insights.  

Gerold Alsmeyer acknowledges financial support by the German Research Foundation (DFG) under Germany's Excellence Strategy EXC 2044--390685587, Mathematics M\"unster: Dynamics--Geometry--Structure. Fernando Cordero and Hannah Dopmeyer gratefully acknowledge financial support by the German Research Foundation (DFG) -- Project-ID 317210226 -- SFB1283.

\addtocontents{toc}{\protect\setcounter{tocdepth}{2}}
\bibliographystyle{abbrv}
\bibliography{reference}

\end{document}